\def\ii{{\sqrt{-1}}}
\def\ee{\mathrm e}
\def\res{\mathrm {res}}
\def\sn{\mathrm {sn}}
\def\cn{\mathrm {cn}}
\def\dn{\mathrm {dn}}
\def\al{\mathrm {al}}
\def\Sym{{S}}
\def\nuII{{\nu^{II}}}
\def\nuI{{\nu^{I}}}
\def\CC{{\mathbb C}}
\def\ZZ{{\mathbb Z}}
\def\RR{{\mathbb R}}
\def\JJ{{\mathcal J}}
\def\Sp{{\mathrm{Sp}}}
\def\WW{{\mathcal{W}}}
\def\hzeta{\hat{\zeta}}
\newtheorem{theorem}{Theorem}[section]
\newtheorem{definition}[theorem]{Definition}
\newtheorem{proposition}[theorem]{Proposition}
\newtheorem{corollary}[theorem]{Corollary}
\newtheorem{remark}[theorem]{Remark}
\newtheorem{lemma}[theorem]{Lemma}
\def\dfrac#1#2{{\displaystyle\frac{#1}{#2}}}
\def\book#1{\rm{#1}, }
\def\paper#1{\textit{#1}, }
\def\jour#1{\rm{#1}, }
\def\yr#1{({\rm{#1}) }}
\def\vol#1{\textbf{#1}}
\def\pages#1{\rm{#1}}
\def\publaddr#1{\rm{#1}, }
\def\publ#1{\rm{#1}, }
\def\by#1{{\rm{#1}, }}
\def\eds{\rm{eds.}}
\begin{document}

\title{The $\al$ function 
of a cyclic trigonal curve of genus three}

\author{Shigeki Matsutani and Emma Previato}

\maketitle

\begin{abstract}
A cyclic trigonal curve of genus three is 
a $\mathbb{Z}_3$ Galois cover of $\mathbb{P}^1$,
therefore can be written as a  smooth plane curve with equation
$y^3 = f(x) =(x - b_1) (x - b_2) (x - b_3) (x - b_4)$.
Following Weierstrass
for the hyperelliptic case, we define an ``$\al$'' function
for this curve and $\al^{(c)}_r$, $c=0,1,2$, for each one of
three particular  covers of the Jacobian of the curve,
and $r=1,2,3,4$ for a finite branchpoint $(b_r,0)$.
 This 
 generalization of the Jacobi $\sn$, $\cn$, $\dn$ functions
 satisfies the relation:
$$
\sum_{r=1}^4 \frac{\prod_{c=0}^2\al_r^{(c)}(u)}{f'(b_r)} = 1
$$
which generalizes $\sn^2u + \cn^2u = 1$.
We also show that this can be viewed as a special case of the Frobenius theta identity.
\end{abstract}



\section{Introduction}

Jacobi's $\sn$, $\cn$, $\dn$ functions and  Weierstrass'
$\wp$ and $\sigma$ functions are closely connected with the coordinates
of the elliptic curve embedded in the affine plane.
The hyperelliptic  analog
of the Jacobi $\sn$, $\cn$, $\dn$ functions
was proposed by Weierstrass, who denoted it  ``$\al$'' in honor of Abel
  \cite{Wei}.
Solutions of completely integrable Hamiltonian
systems which linearize on a hyperelliptic Jacobian, 
such as the Neumann system
and the sine-Gordon equation, were produced using 
the $\al$ functions as phase-space coordinates \cite[Vol. II]{Mum}, \cite{Ma3}.

In this article we  extend the $\al$ function to a trigonal
curve by using Kleinian sigma functions \cite{Kl1, BEL1, EEL};
a possible application will be analogous
expressions for the solution of
the generalized Neumann  system studied by
 Schilling \cite{S} and 
 Adams, Harnad and  Previato \cite{AHP}, among others.
In the present work however the emphasis is on
the definition, and the algebraic constraints satisfied
by the cyclic $\al$ function, which in principle can be
generalized to any $\mathbb{Z}_n$-curve.
Such beautiful algebraic relations for Abelian functions
occur often in the literature,
not necessarily just for genus one: in particular the
article \cite{lindqvistpeetre} produces elementary proofs
(by substitution in the Abelian integrals) 
of generalized Ones and Twos, as large classes
of identities for inverses of Abelian integrals are known in Sweden.
It may be possible that our identities have like elementary proofs.

We work with smooth complete curves over the complex numbers,
namely compact Riemann surfaces.
For  a hyperelliptic
curve $C_g$: $y^2 = \prod_{i=1}^{2g+1} (x - b_i)$  of genus $g$,
we denote the Jacobian by $\JJ_g$ and the vector
in $\CC^g$ given by  integration\footnote{The ambiguity
due to  path of integration does not affect the formulas and is ignored
throughout.} 
from the base point
$\infty$ to the branch point $(x, y)=(b_a, 0)$ 
by
$\omega_a$.
A hyperelliptic $\al$ function is defined as:
$$
	\al_r(u) = \gamma_r''\frac{\ee^{-^t u \eta' \omega^{\prime -1}\omega_r}
                 \sigma(u+\omega_r)}{\sigma(u)},
$$
where $\sigma$ is  Klein's hyperelliptic $\sigma$ function 
\cite{Ba2} and the remaining symbols are defined in the Appendix.
If  for a point $u$ in the Jacobian $\JJ_g$, 
we choose any preimage under  the Abel map 
in the $g$-th symmetric product $S^g X_g$
and denote it
simply by 
$(x_i, y_i)_{i=1, \ldots, g}$ 
(meaning an unordered $g$-tuple),
then we can  give an algebraic expression of $\al_r(u)$
\begin{equation}
	\al_r(u) = \sqrt{F(b_r)}, \quad
        F(x) = \prod_{j=1}^g (x - x_j).
\label{eq:al_F}
\end{equation}
In order to fix the sign of the square root,
following Baker, we define the $\al$ function
on the  $g$-th symmetric product $S^g \hat C_{2g+1}$
where $\hat C_{2g+1}$ is a double cover of $C_g$
(see Appendix for details).
Weierstrass defined the $\al$ function using these ideas as well as 
an expression in terms of theta functions which he calls
$\mathrm{Al}$, using an analog of the elliptic sigma function,
a precursor of the Kleinian sigma function
 \cite{Wei, Kl1}\footnote{The letters
$\mathrm{al}$ and $\mathrm{Al}$ were used by Weierstrass in honor of  Abel.}.
Weierstrass investigated the $\al$ function  to construct
his version of the  sigma function for hyperelliptic curves, in terms of the
 affine coordinates of $S^g C_g$.
In the calculation \cite[p. 296]{Wei},
the sine-Gordon equation plays an
important role \cite{Ma1, Ma2}. Indeed,
the hyperelliptic $\al$ functions satisfy
the ellipsoidal relations:
\begin{equation}
	\sum_{r=1}^{g+1} c_{r} \al_r(u)^2 = 1,
\label{eq:sum_al2}
\end{equation}
where $c_{r}$ is a  constant that depends on
the branch points $b_a$'s.
This is a consequence 
of the Frobenius theta formula \cite[Ch.~III, Corollary 7.5]{Mum}:
\begin{equation}
\sum_{r=1}^{g+1}
c_{2g+1,r} \frac{\sigma(u+\omega_r)^2}{\sigma(u)^2} = 1,
\label{eq:sum_al2A}
\end{equation}
which gives the homogeneous relations in $\mathbb{P}^{2g+1}$
$$
c_{r,r}\sigma(u+\omega_{r})^2 +
\sum_{r'=1}^{g} c_{r,r'}\sigma(u+\omega_{r'})^2 =\sigma(u)^2, \quad
r=g+1, \ldots, 2g,
$$
where $c_{r,r'}$ is also a certain constant related to the $b_a$'s.
These $\al$-functions and the relation (\ref{eq:sum_al2}) are
a generalization of the Jacobi elliptic
functions and their relations,
$$
	\sn(v) = \sqrt{e_1-e_3} 
 \frac{\ee^{\eta_3 u} \sigma(\omega_3)\sigma(u)} {\sigma(u+\omega_3)}, \quad
	\cn(v) = 
 \frac{\ee^{(\eta_3-\eta_1) u} 
\sigma(\omega_3)\sigma(u+\omega_1)}
 {\sigma(\omega_1)\sigma(u+\omega_3)}, \quad
$$
$$
         \mbox{and} \quad
	\sn^2(v) + \cn^2(v) = 1,
$$
where $\sigma$ is the Weierstrass sigma function and
$v =u / \sqrt{e_1-e_3}$. 
Since $\sn(v) $ is proportional to $1/ \sqrt{\wp(u)-e_3}$,
the domain of $\sn(v)$ is a double cover of  $\JJ_1$ where $\wp(u)$
is defined.


\bigskip

Recently, further identities for
 the sigma function over a cyclic trigonal curve $X$:
$y^3 = f(x) = (x - b_1)(x- b_2)(x- b_3)(x- b_4)$ become available
\cite{EEL, EEMOP1, EEMOP2, O}. By using these results and
the $\mathbb Z_3$-symmetry of the curve, we  
define the trigonal {\lq\lq}$\al${\rq\rq} function and 
investigate its properties.
Again, to resolve a $\mathbb{Z}_3$-ambiguity, we will define a certain triple
 cover of the curve. For simplicity, in fact, we introduce the universal cover
of     $X$, which in turns admits a continuous
map to any cover of $X$, and we
use it to define an extended
 Abel map. Although the universal cover is not algebraic,
in fact unlike for $g=1$ it is the open unit disc, the values 
we get can also be computed algebraically using a triple cover of $X$;
we will introduce three finite covers of the Jacobian of $X$
labelled by $c=0,1,2$. Then,
the first of our main theorems  is the following:
\begin{theorem}
\begin{gather}
\al_a^{(c)}(u) := \frac{\ee^{-^t u\varphi_{a;c}}
            \sigma(u+\zeta_3^c\omega_a)}
            {\sigma_{33}(\zeta_3^c \omega_a)\sigma(u)}, \quad
\al_a^{(c)}(u) =- \zeta_3^{c+\varepsilon}
\frac{A_a(P_1, P_2, P_3)}{\sqrt[3]{F_a(P_1, P_2, P_3)}},
\label{eq:ala1}
\end{gather}
where $\zeta_3$ is a primitive third root of unity,
$a=1,\ldots,4$ labels the 4 branchpoints,
$A_a$ and $F_a$ are 
meromorphic functions of 3 (unordered) points
  defined in section \ref{functions};
 the vector $\varphi_{a;c}$ and a
  $\ZZ$-valued function $\varepsilon$
 on the preimage
 of $u$ under the Abel
map will be introduced below.
\end{theorem}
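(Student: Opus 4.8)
The plan is to establish the second equality in (\ref{eq:ala1}); the first is the definition, so what must be proved is that the theta-quotient on the left coincides with the algebraic cube-root expression on the right. I would follow the classical route used by Klein and Baker in the hyperelliptic case: realize both sides as meromorphic functions on the triple cover of $\JJ$ introduced above (equivalently, as functions on the symmetric product pulled back through the extended Abel map), show that they have the same divisor of zeros and poles, and then pin down the remaining multiplicative constant.

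First I would verify that the prefactor $\ee^{-{}^t u\,\varphi_{a;c}}$ is precisely the cocycle that compensates the quasi-periodicity of the Kleinian $\sigma$ function. Under a lattice translation $u\mapsto u+\ell$ the function $\sigma$ acquires an exponential factor built from the second-kind period matrix $\eta$; shifting the argument by $\zeta_3^c\omega_a$ alters this factor by a term linear in $u$, and $\varphi_{a;c}$ must be chosen to absorb exactly that alteration. This is the trigonal analogue of the hyperelliptic factor $\ee^{-{}^t u\,\eta'\omega'^{-1}\omega_r}$ appearing in the definition of $\al_r$, and it is what renders $\al_a^{(c)}$ single-valued on the relevant cover.

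Next I would compute the divisor of the theta-quotient. By the Riemann vanishing theorem in the form available for this curve from \cite{EEL, EEMOP1, EEMOP2}, the numerator $\sigma(u+\zeta_3^c\omega_a)$ vanishes exactly when the preimage of $u+\zeta_3^c\omega_a$ is a degree-deficient divisor, i.e.\ when one of the points $P_1,P_2,P_3$ meets the image of the branchpoint $(b_a,0)$ transported by the $c$-th power of the automorphism $(x,y)\mapsto(x,\zeta_3 y)$; the denominator $\sigma(u)$ supplies the pole locus. On the algebraic side I would check that $A_a/\sqrt[3]{F_a}$, with $A_a$ and $F_a$ as defined in Section~\ref{functions}, carries identical zero and pole data once a branch of the cube root is fixed. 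The $\mathbb{Z}_3$-symmetry is the organizing principle: the three covers $c=0,1,2$ correspond to the three sheets over $\PP^1$, and translation by $\zeta_3^c\omega_a$ realizes the deck transformation, which is exactly what forces a cube root in place of the hyperelliptic square root.

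Having matched divisors, the two sides differ by a constant, which I would evaluate by comparing leading terms in a convenient degeneration---for instance the limit $u\to 0$ using the known Taylor expansion of $\sigma$ together with the normalizing factor $\sigma_{33}(\zeta_3^c\omega_a)$ in the denominator. The main obstacle, I expect, is not the divisor bookkeeping but the correct determination of the cube-root branch and its accompanying sign, encoded by the $\ZZ$-valued function $\varepsilon$: unlike the two-valued hyperelliptic situation, the triple cover requires tracking consistently, over a fundamental domain, which sheet one occupies, and then showing that the monodromy of the exponential-times-sigma expression agrees sheet-by-sheet with $-\zeta_3^{c+\varepsilon}$. Verifying this compatibility globally, rather than merely at a single base point, is the delicate step.
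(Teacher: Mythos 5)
Your outline shares the paper's skeleton --- realize both sides as functions on the cover $\JJ^{(a;c)}$, match divisors, then fix a constant --- and your remarks about the cocycle $\ee^{-{}^t u\varphi_{a;c}}$ and the sheet-tracking $\varepsilon$ correspond to Proposition \ref{prop:al_period} and Lemma \ref{lm:alWelldef}. But there is a genuine gap at the final step, and it sits exactly where the paper's actual engine enters. The proof in the text does not proceed from Riemann vanishing; it starts from the addition formula of Theorem \ref{th:addition} with $(n,m)=(3,1)$ and $v=\omega_a$, which yields in closed form
\begin{equation*}
\frac{\sigma(u+\omega_a)\,\sigma(u+\hzeta_3\omega_a)\,\sigma(u+\hzeta_3^2\omega_a)}{\sigma(u)^3\,\sigma_{33}(\omega_a)^3}
= A_a(P_1,P_2,P_3)^3\,F_a(P_1,P_2,P_3)^3\prod_{i=1}^3\frac{1}{(x_i-b_a)^4}
= \frac{A_a^3}{F_a},
\end{equation*}
i.e.\ precisely the cube of the desired identity. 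This does two things your plan cannot. First, it reduces the problem to distributing the zeros and poles of an identity already known to hold among the three factors $c=0,1,2$ (that is what Lemmas \ref{lemma:zero_al} and \ref{lemma:zero_alrh} are for). Second, and more importantly, it forces the normalization $K_{a,0}K_{a,1}K_{a,2}=1$ on the undetermined constants, and this relation is indispensable in Lemma \ref{lemma:Ka}: the degeneration argument by itself only produces $K_{a,c}=-\zeta_3^{-2c}\,2/(\sigma_{33}(\omega_a)^2C_a^2)$, which still contains the transcendental unknown $\sigma_{33}(\omega_a)$; only by combining with $\prod_c K_{a,c}=1$ does one obtain both $K_{a,c}=-\zeta_3^{c}$ and, as a by-product, $\sigma_{33}(\omega_a)=\sqrt2\,C_a^{-1}$. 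Your step ``evaluate the constant by comparing leading terms'' therefore stalls: without an independent evaluation of $\sigma_{33}(\omega_a)$ the constant cannot be pinned down.

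A second, smaller defect: the specific degeneration you suggest, $u\to 0$, is a bad choice, because $0\in\WW_2=\Theta_2$, so $\sigma(u)$ in the denominator vanishes there while the algebraic side simultaneously degenerates ($P_i\to\infty$). The paper instead evaluates along $u=u^{(3)}-\hzeta_3^c\omega_a$ with $u^{(3)}=w((x_3,y_3))\in\kappa^{-1}\WW_1$, controls $A_a$ and $F_a$ there via Proposition \ref{prop:Azero} (4), and differentiates with respect to the local parameters at $B_a$ and at $\infty$ to read off the leading coefficient. If you wish to keep a purely divisor-theoretic route, you must at minimum import the addition formula, or some equivalent independent evaluation of $\sigma_{33}(\omega_a)$, to close the argument.
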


We arrive at a generalization of
(\ref{eq:sum_al2A}) \cite[Ch.~IIIa, Corollary 7.5]{Mum}
and obtain the second main theorem of this article:
\begin{theorem} (A generalized Frobenius' theta formula)
\label{th:1-1}
We have 
$$
	\sum_{r=1}^4 \frac{ \prod_{c=0}^2\al_r^{(c)}(u)}{f'(b_r)} = 1,
\quad
\sum_{a=1}^4
\frac{\prod_{c=0}^2\sigma(u+\hzeta_3^c\omega_a)}{\sigma(u)^3/(2\sqrt{2})} = 1.
$$
\end{theorem}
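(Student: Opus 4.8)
The key identity to prove is the second equation in Theorem \ref{th:1-1}, namely that the weighted sum of products $\sum_{a=1}^4 \prod_{c=0}^2 \sigma(u+\hzeta_3^c\omega_a)$ equals $\sigma(u)^3/(2\sqrt 2)$; the first equation then follows by substituting the first defining formula for $\al_a^{(c)}$ from Theorem (eq.~\eqref{eq:ala1}), since the exponential factors $\ee^{-{}^t u\varphi_{a;c}}$ collected over $c=0,1,2$ telescope (one expects $\sum_{c=0}^2\varphi_{a;c}=0$ or a quantity absorbed into the constants $\sigma_{33}(\zeta_3^c\omega_a)$ and $f'(b_r)=2\sqrt 2$ up to normalization). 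So I would first \emph{reduce} the trigonal ``$\sn^2+\cn^2=1$'' statement to the pure $\sigma$-function identity, isolating exactly how the prefactors $1/f'(b_r)$ and the product $\prod_c\sigma_{33}(\zeta_3^c\omega_a)$ must combine to give the scalar $2\sqrt2$; this bookkeeping of constants is where the precise value $2\sqrt2$ gets pinned down.

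The core analytic step is to prove the sigma identity itself. The natural approach is to recognize the left-hand side as a section of the theta bundle and compare it with $\sigma(u)^3$ as sections of the \emph{same} line bundle on the Jacobian. Concretely, I would form the ratio
\[
\Phi(u)=\frac{\sum_{a=1}^4\prod_{c=0}^2\sigma(u+\hzeta_3^c\omega_a)}{\sigma(u)^3}
\]
and show it is a constant. First I would verify that numerator and denominator transform identically under the period lattice: $\sigma$ has known quasi-periodicity (the exponential-quadratic automorphy factor), and because $\sum_{c=0}^2\hzeta_3^c\omega_a=0$ the three shifts $u\mapsto u+\hzeta_3^c\omega_a$ sum to a translation-invariant total, so each summand of the numerator carries the \emph{cube} of the automorphy factor of $\sigma$ — matching $\sigma(u)^3$ exactly. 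Hence $\Phi$ is a genuine meromorphic function on $\JJ$. Next I would control its poles: the only zeros of $\sigma(u)^3$ lie on the theta divisor (with multiplicity three), and the $\mathbb Z_3$-symmetry of the curve, together with the vanishing behaviour of $\sigma(u+\hzeta_3^c\omega_a)$ at the branchpoint translates, should force the numerator to vanish to order at least three along $\Theta$ as well, so $\Phi$ is holomorphic on all of $\JJ$, hence constant.

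To evaluate the constant I would use the second representation in \eqref{eq:ala1}: substituting the algebraic expression $\al_a^{(c)}=-\zeta_3^{c+\varepsilon}A_a/\sqrt[3]{F_a}$ turns the first identity of Theorem \ref{th:1-1} into a purely algebraic statement $\sum_{a=1}^4 A_a^3/(f'(b_a)F_a)=1$ about the meromorphic functions $A_a,F_a$ on $S^3X$, where the product over $c$ kills the roots of unity ($\prod_{c=0}^2\zeta_3^{c+\varepsilon}=\zeta_3^{3\varepsilon}\cdot\zeta_3^{0+1+2}=1$) and clears the cube root. This algebraic identity can then be checked by a residue/partial-fraction argument: both sides are rational in the symmetric functions of $P_1,P_2,P_3$, and one verifies the $4$ residues at $x=b_a$ and the behaviour at infinity, a finite Lagrange-interpolation computation that also fixes the normalizing scalar $f'(b_r)=2\sqrt2$ appearing in the denominators.

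The main obstacle I anticipate is \emph{not} the constancy argument but the two sign/root-of-unity bookkeeping issues: showing that the $\mathbb Z$-valued function $\varepsilon$ and the cube-root branch choices are globally consistent so that $\prod_{c}\al_a^{(c)}$ is single-valued on $\JJ$ (the whole point of introducing the triple covers and the universal cover in the Introduction), and simultaneously confirming that the numerator truly vanishes to full order three on $\Theta$ rather than only order two — if it vanished to lower order $\Phi$ would have a pole and the identity would fail. Resolving this requires the explicit local expansion of Klein's $\sigma$ near the theta divisor for the trigonal curve, drawing on the recent identities of \cite{EEL, EEMOP1, EEMOP2, O}, and I expect that is where the real work lies.
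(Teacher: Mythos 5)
Your proposal bundles two different arguments, and it is worth separating them, because one has a genuine gap while the other is, in essence, the proof the paper actually gives. The Liouville argument on $\JJ$ is incomplete at exactly the point you flag. The quasi-periodicity check is fine: since $(1+\hzeta_3+\hzeta_3^2)\omega_a=0$ and $L$ is bilinear, each summand $\prod_{c}\sigma(u+\hzeta_3^c\omega_a)$ acquires the factor $\exp(3L(u+\tfrac12\ell,\ell))\chi(\ell)^3$ under $u\mapsto u+\ell$, matching $\sigma(u)^3$, so $\Phi$ descends to $\JJ$. But holomorphy of $\Phi$ requires the \emph{sum} of the four products to vanish to order $3$ along $\Theta_2=\WW_2$, whereas each individual summand vanishes only on the translates $\Theta_2-\hzeta_3^c\omega_a$ and not on $\Theta_2$ itself; the required vanishing could only come from cancellation among the four terms, and establishing that cancellation is essentially equivalent to the identity being proved. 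You offer no mechanism for it, so as written this route does not close.

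By contrast, your ``evaluation of the constant'' is not a constant evaluation at all: once Theorem \ref{th:al} is available, the statement $\sum_a\prod_c\al_a^{(c)}(u)/f'(b_a)=1$ \emph{is} the algebraic identity in $A_a$ and $F_a$ for all $(P_1,P_2,P_3)$, and your residue check of that identity proves the whole theorem, making the Liouville step superfluous. This is precisely the paper's route: it forms the meromorphic differential
$$
K((x,y))=\frac{\Bigl(\prod_{c=0}^2\mu((x,\zeta_3^cy);P_1,P_2,P_3)\Bigr)\,dx}{3f(x)F((x,y);P_1,P_2,P_3)}
$$
on $X$, verifies that its only poles are at $\infty$ (residue $-1$) and at the branch points $(b_a,0)$ (residue $\prod_c\al_a^{(c)}/f'(b_a)$, Lemma \ref{lm:resK}), and concludes from $\oint_\Gamma K=0$; the sigma-function form then follows from Theorem \ref{th:al} together with $\sigma_{33}(\omega_a)=\sqrt2/C_a$ (Lemma \ref{lemma:Ka}), which is where $2\sqrt2$ is pinned down. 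If you promote the residue computation to the main argument, you must also dispose of the apparent poles coming from the factor $F$ in the denominator at the points $x=x_i$ (the content of the paper's unnumbered lemma before Lemma \ref{lm:resK}), not only the poles at $x=b_a$ and at infinity. Finally, note the sign: $\prod_{c=0}^2\bigl(-\zeta_3^{c+\varepsilon}\bigr)=-1$, not $+1$, so your reduced identity should carry an overall minus sign; this is exactly the bookkeeping you anticipated, but it does need to be tracked.
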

In the course of the study, by choosing an appropriate constant
multiple of the
sigma function, we obtain the additional identity:
$$
\sigma_{33}(\omega_a) =\left(
\sqrt[3]{\frac{d f(x)}{ dx}}\Bigr|_{x=b_a}\right)^{-1}.
$$

We remark that the definition of the trigonal $\al$-function and 
its properties
might depend upon the conventions we employ, {\it e.g.}, the path 
of integration in
 the Abelian coordinates, unlike the
algebraic functions of the curve. 
However,
Theorem \ref{th:1-1} reflects  the $\mathbb{Z}_3$-symmetry
of the Abelian variety and (\ref{eq:ala1}) connects
the $\sigma$-functions and the affine coordinates of the curve,
as the Jacobi $\sn$, $\cn$ and $\dn$ functions do.

The contents of this  article are as follows:
Section two presents the geometry of a genus-3 cyclic trigonal
curve in the affine plane.
Sections three and four are devoted to 
the addition law on the Jacobian. Section five is 
about functions, $A$ and $F$,
associated with the trigonal $\al$-function
as in (\ref{eq:ala1}).
Sections six and seven give a brief introduction
of the sigma function and its addition structure. Section eight is devoted to
the definition of the $\al$ function and relates the sigma function to the
 $A$ and $F$ functions. In section nine, we prove the analog of the 
Frobenius theta identity. Section ten studies the domain
of the $\al$-function. In the Appendix, we review the hyperelliptic 
$\al$ function.

\section{genus-3 $\mathbb{Z}_3$ curves}\label{curves}

A  curve $X$ of genus three with Galois action by  $\mathbb{Z}_3$
at one point can be represented by an affine plane model:

$$
 y^3 = f(x), \quad f(x):=(x - b_1)(x- b_2)(x- b_3)(x- b_4),
$$
where $b_i$'s are distinct complex numbers. Let the  branch point
$(b_i, 0)$ be denoted by $B_i$ $(i=1, 2, 3, 4)$.
A basis for the  holomorphic one-forms over $X$ 
is given by
$$
	\nuI_1 = \frac{dx}{3 y^2}, \quad
	\nuI_2 = \frac{x dx}{3 y^2}, \quad
	\nuI_3 = \frac{ dx}{3 y}.
$$
For a fixed primitive third root of unity $\zeta_3$,
there is an action $\hzeta_3$ on $X$ and the space $H^0(X, K_X)$ 
of holomorphic forms ($K_X$ denotes the canonical divisor,
and $K_X$ is also used for the corresponding sheaf),
induced from  a Galois action on $X$:

\begin{gather}
     \hzeta_3 (x, y) =(x, \zeta_3 y), \quad
        \hzeta_3\begin{pmatrix} \nuI_1\\ \nuI_2\\ \nuI_3 \end{pmatrix}
     =\begin{pmatrix} \zeta_3\nuI_1\\ \zeta_3\nuI_2\\ \zeta_3^2\nuI_3 \end{pmatrix}.
\end{gather}

We choose a $\mathbb{Z}$-basis 
$   \alpha_i, \beta_j,$  $ (1\leqq i, j\leqq 3)$
of $H_1(X,\ZZ)$ with
intersection numbers
$[\alpha_i, \alpha_j]=0$, $[\beta_i, \beta_j]=0$ and
$[\alpha_i, \beta_j]=-[\beta_i, \alpha_j]=\delta_{i,j}$  
illustrated in Figure 1, cf. \cite{EEP, Wel}.
\begin{figure}
\begin{center}
\includegraphics[scale=0.5]{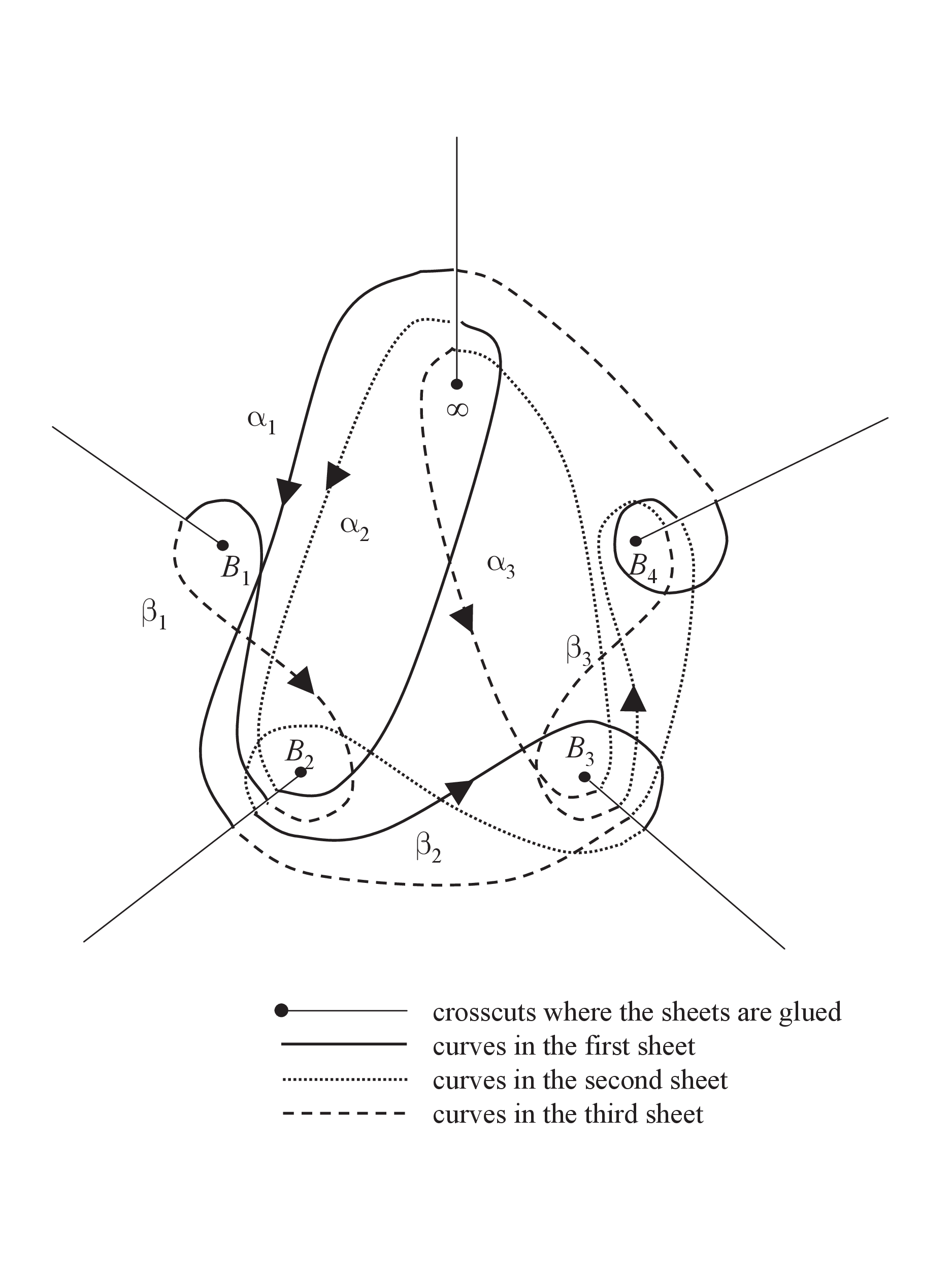}
\caption{Homology basis $\alpha_i$ and $\beta_i$, $i = 1, 2, 3$}
\label{fig:1}
\end{center}
\end{figure}


The half-period matrices for this basis are given by:
$$
	\omega' := (\omega_1', \omega_2', \omega_3'), \quad
	\omega'' := (\omega_1'', \omega_2'', \omega_3''), \quad
    {\Omega}:=\left(\begin{matrix} \omega' \\ \omega''
     \end{matrix}\right),
$$
where
$$
    \omega_a':=\frac{1}{2}\left[\int_{\alpha_{a}}\nuI\right], \quad
      \omega_a'':=\frac{1}{2}\left[\int_{\beta_{a}}\nuI\right], \quad
$$
with the convention 
that we go around a branchpoint along the paths drawn in Fig. 1, for example
we traverse $\beta_1$ around $B_1$ starting on sheet 1 and crossing over
to sheet 3.

A choice of
$\alpha$'s and $\beta$'s as in Fig. 1 yields certain relations, which
we'll use in computations even though strictly speaking they
hold modulo homotopy. Note that when acting by (powers of)
$\zeta_3$ we change sheet at a branchpoint: for example,
$\displaystyle{\int_\infty^{(b_a, 0)} \nuI - \hzeta_3 
\int_\infty^{(b_a, 0)} \nuI}=
\displaystyle{\int_\infty^{(b_a, 0)} \nuI +
\int^\infty_{(b_a, 0)} \hzeta_3\nuI}$.
\begin{proposition}
\label{lm:omega_a}
For
$
\displaystyle{\omega_a := \int_\infty^{(b_a, 0)} \nuI}
$, $(a = 1, 2, 3, 4)$ we have following relations:

\begin{enumerate} \item
We decompose the $\omega_a'$ and $\omega_a''$ in terms of
$\hzeta_3^c \omega_a$,
$$
	\omega'_1 = \frac{1}{2}
 \left((1-\hzeta_3^2) \omega_2 +\hzeta_3^2(1-\hzeta_3^2) \omega_3 
         +\hzeta_3(1-\hzeta_3) \omega_4\right), \quad
 \omega_a' = \frac{1}{2}\hzeta_3^{a-2} (\hzeta_3 - 1)\omega_a,
          \quad (a =  2, 3),
$$
$$
	 \omega_a'' = \frac{1}{2}
\hzeta_3^{a-2}(\hzeta_3 -1) (\omega_a -\omega_{a+1}),
          \quad (a = 1, 2, 3).
$$

 \item
$\hzeta_3^c \omega_a$ $(c=0, 1, a=1, 2, 3)$,
 are linear independent over $\ZZ$.
\end{enumerate}
\end{proposition}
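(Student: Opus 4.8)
The plan is to handle the two parts in turn, reducing every period to integrals of $\nuI$ over the explicit arcs $\hzeta_3^c\gamma_a$, where $\gamma_a$ denotes the path from $\infty$ to $B_a$ used to define $\omega_a=\int_\infty^{(b_a,0)}\nuI$. The dictionary I would use is that, since $\hzeta_3^*\nuI=\hzeta_3\,\nuI$ (with $\hzeta_3$ acting as $\mathrm{diag}(\zeta_3,\zeta_3,\zeta_3^2)$) and both $\infty$ and $B_a$ are fixed by the automorphism, one has $\int_{\hzeta_3^c\gamma_a}\nuI=\hzeta_3^c\omega_a$, whereas reversing an arc changes the sign; this is precisely the content of the displayed sheet-change identity preceding the statement.

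For part (1) I would first read off from Figure 1 each cycle $\alpha_a,\beta_a$ as a concatenation of such oriented arcs. The loop that leaves $\infty$ along $\gamma_a$ and returns on the next sheet contributes $\omega_a-\hzeta_3\omega_a=(1-\hzeta_3)\omega_a$, which up to orientation and a power of $\hzeta_3$ fixed by the starting sheet gives $2\omega_a'=\hzeta_3^{a-2}(\hzeta_3-1)\omega_a$ for $a=2,3$; the $\beta_a$, joining consecutive branch points $B_a,B_{a+1}$ and closing across sheets, produce the differences $\omega_a-\omega_{a+1}$ and hence the stated $\omega_a''$; and $\alpha_1$, encircling the remaining branch points, yields the three-term expression for $\omega_1'$. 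Dividing each period by $2$ and simplifying with $\hzeta_3^3=\mathrm{id}$ gives the claimed formulas. I expect the only real difficulty here to be topological: fixing the starting sheets, the orientations, and the homotopy conventions of Figure 1 exactly, since a single wrong sign or sheet index corrupts all the coefficients.

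For part (2), write $Z:=\hzeta_3$ and $W:=\mathrm{span}_\RR\{\omega_a,Z\omega_a:a=1,2,3\}\subseteq\CC^3$. Because $\RR$-linear independence of these six vectors is equivalent to $\dim_\RR W=6$ and implies the asserted $\ZZ$-linear independence, it suffices to prove $\dim_\RR W=6$. The identity $I+Z+Z^2=0$ (valid since the eigenvalues of $Z$ are $\zeta_3,\zeta_3,\zeta_3^2$) gives $Z^2\omega_a=-\omega_a-Z\omega_a\in W$, so $W$ is $Z$-stable; as $Z|_W$ then satisfies the irreducible $x^2+x+1$, it endows $W$ with a $\CC$-structure, forcing $\dim_\RR W$ to be even. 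Next, by part (1) the four half-periods $\omega_2',\omega_3',\omega_1'',\omega_2''$ involve only $\omega_1,\omega_2,\omega_3$ and so lie in $W$.

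The one apparent obstacle is that the formulas for $\omega_1'$ and $\omega_3''$ also drag in the extra vector $\omega_4$, which is not among the six I wish to control. This is removed by the observation that the $\omega_3$- and $\omega_4$-contributions cancel in the combination $2\omega_1'-2\omega_3''=(1-Z^2)\omega_2$, whose right-hand side equals $2\omega_2+Z\omega_2\in W$. Finally, the six half-periods $\omega_a',\omega_a''$ are $\RR$-linearly independent (the vectors $2\omega_a',2\omega_a''$ generating the full-rank period lattice $\Lambda\subset\CC^3$), whence the five visibly independent combinations $\omega_2',\omega_3',\omega_1'',\omega_2'',\,\omega_1'-\omega_3''$ of that basis are $\RR$-independent and all lie in $W$; thus $\dim_\RR W\ge5$, and by parity $\dim_\RR W=6$. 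I regard the cancellation eliminating $\omega_4$, together with the parity step that promotes dimension $5$ to $6$, as the crux of the argument.
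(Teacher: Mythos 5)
Your proof is correct, and while your part (1) is handled exactly as in the paper (both of you read the cycle decompositions off Figure 1, with the same caveats about sheets, orientations and crosscuts), your argument for part (2) takes a genuinely different route. The paper supplements $(1+\hzeta_3+\hzeta_3^2)\omega_a=0$ with two further relations among the $\hzeta_3^c\omega_a$ $(a=1,\ldots,4)$, obtained by integrating over loops that Figure 1 exhibits as contractible; these eliminate $\omega_4$ and $\hzeta_3\omega_4$ and cut the twelve vectors $\hzeta_3^c\omega_a$ down to the six listed generators, whose independence then follows because their rational span contains the rank-$6$ lattice $\Lambda$. You instead use only the single relation $I+Z+Z^2=0$ together with the formulas of part (1): you remove $\omega_4$ by the purely algebraic cancellation $2\omega_1'-2\omega_3''=(2+Z)\,\omega_2$ rather than by an extra period relation, accept that this yields only five independent half-period combinations inside $W$, and recover the sixth dimension from the observation that $Z$-stability makes $W$ a module over $\RR[x]/(x^2+x+1)\cong\CC$ and hence of even real dimension. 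Your version buys independence from the additional geometric input (the two contractible loops), at the price of the parity step; the paper's version derives relations that it reuses elsewhere (for instance in expressing $\omega_4$ and in the computations behind Proposition \ref{prop:hcab} and Remark \ref{rmk:w''}), so the extra geometric work is not wasted there. Both arguments ultimately rest on the standard fact that the six half-periods span $\CC^3$ over $\RR$, i.e.\ that $\Lambda$ has full rank.
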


\begin{proof}
(1) is directly obtained from Figure 1.
We obtain (2) through the following identities:
\begin{eqnarray}
	&(1 + \hzeta_3 + \hzeta_3^2) \omega_a = 0, \label{eq:Prop2.1}\\
	&(1-\hzeta_3^2) \omega_1 +\hzeta_3^2(1-\hzeta_3^2) \omega_2
	+\hzeta_3(1-\hzeta_3^2) \omega_3 +(1-\hzeta_3^2) \omega_4 = 0,
            \nonumber \\
	&(1-\hzeta_3) \omega_1 +\hzeta_3(1-\hzeta_3) \omega_2
	+\hzeta_3^2(1-\hzeta_3) \omega_3 +(1-\hzeta_3) \omega_4 = 0.
 \nonumber
\end{eqnarray}
The first identity comes from the fact
$$
  \int_{\infty}^{(x_1, y_1)} 
    \begin{pmatrix} \nuI_1\\ \nuI_2\\ \nuI_3 \end{pmatrix}
  + \int_{\infty}^{(x_1, \zeta_3 y_1)} 
    \begin{pmatrix}\zeta_3^2 \nuI_1\\ \zeta_3^2 \nuI_2\\ \zeta_3 \nuI_3\end{pmatrix}
  +  \int_{\infty}^{(x_1, \zeta_3^2 y_1)} 
    \begin{pmatrix}\zeta_3 \nuI_1\\ \zeta_3 \nuI_2\\ \zeta_3^2 \nuI_3 \end{pmatrix}
= 0,
$$
because $(1 + \zeta_3 + \zeta_3^2) = 0$. 
The others are obtained by integrating along paths which, as seen in 
Figure 1, are homotopic to a point.
\end{proof}


Let $\Lambda$ be the lattice in $\CC^3$
generated by $2\omega'$ and $2\omega''$.
The  universal covering space of $X$
is homeomorphic to the space of equivalence classes
(up to homotopy) of paths in $X$ which begin at some fixed point $P$;
for simplicity,  we use the space of paths $\Gamma_P X$ because
all the functions we define are independent of homotopy.
The map $\kappa_P: \Gamma_P X \to X$ such that
for a path $\Gamma_{Q,P}$ from $P$ to $Q$, $\kappa_P(\Gamma_{Q,P})=Q$
defines a fiber structure on $\Gamma_P X$.
The path $\Gamma_{Q,P}$ can be decomposed into
$\Gamma_{Q,P} = \sum_{i=1}^g (n_i \alpha_i+n'_i \beta_i) + \Gamma'_{Q,P}$
up to homology,
where $\Gamma'_{Q,P}$ is a simple curve from $P$ to $Q$ without any 
loops in $X$, so that the integral of a holomorphic differential
on $\Gamma_{Q,P}$ and $\Gamma'_{Q,P}$ is the same, modulo periods.

We extend the Abel map $w$ and define, using the same letter,
a map from $\Gamma_\infty X$
 to $\CC^3$:
$$
 w(\Gamma_{(x,y), \infty})
     := \int_{ \Gamma_{(x,y), \infty}} \nuI, \quad \nuI:=
        \begin{pmatrix} \nuI_1\\ \nuI_2\\ \nuI_3 \end{pmatrix}.
$$
We simply write $w(x, y)$ for $(x,y) \in X$.
We write  $(x_i, y_i)_{i=1, 2, \ldots, k}$ to indicate
(by slightly abusing notation)
 an element of the symmetric product $S^k \Gamma_\infty X$, and
we extend the Abel map by 
$$
	w((x_i, y_i)_{i=1, 2, \ldots, k}) = 
	w(\Gamma_{(x_i, y_i),\infty}|_{i=1, 2, \ldots, k}) := 
	  \sum_{i=1}^k  w(\Gamma_{(x_i,y_i), \infty})
	  =\sum_{i=1}^k  w(x_i,y_i).
$$
The map $w$ is surjective for $k\ge 3$ (Abel-Jacobi theorem).
We denote by $\mathcal{J}$ the Jacobian of $X$
and by $\kappa$ the natural projection defined by the lattice $\Lambda$,
$$
	\kappa: \CC^3 \to \JJ = \CC^3/\Lambda.
$$
The strata of $\JJ$,  $\WW_k:= \kappa w(S^k \Gamma_\infty X)$ $(k\ge 1)$,
are the same as the sets $\kappa w(S^k X)$, abbreviating
$w\circ \kappa_\infty$ as $w$.


The $\mathbb{Z}_3$ action on $X$ 
 induces an action on the Jacobian $\hzeta_3: \JJ \to \JJ$
such that $\hzeta_3^3 = id$ and equivariantly, a map
on a  preimage $(P_1, P_2, P_3)\in S^3 X$ 
of the Abel map, given by
$(\hzeta_3 P_1, \hzeta_3 P_2, \hzeta_3 P_3)$, indeed
	$\hzeta_3 \Lambda = \Lambda$ as seen by the action on
the paths of integration.

\begin{proposition}
\label{prop:hcab}
$\{3\zeta_3^c \omega_a\}_{a = 1, 2, 3, 4, c = 0, 1, 2}$ is 
a subset of $\Lambda$.
For every $3\hzeta_3^c \omega_a$,
there are integers $h^{(c)\prime}_{a, b}$ and $h^{(c)\prime\prime}_{a, b}$
$(a=1,2,3,4,\ b = 1, 2, 3,\ c=0, 1, 2)$ such that
\begin{equation}
\hzeta_3^c \omega_a = 
\frac{2}{3}\sum_{b=1}^3 (h^{(c)\prime}_{a, b}\omega_b' + 
      h^{(c)\prime\prime}_{a, b}\omega_b'').
\label{eq:invL}
\end{equation}
\end{proposition}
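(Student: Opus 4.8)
The plan is to deduce the statement by inverting the relations of Proposition \ref{lm:omega_a}(1) over the ring $\ZZ[\hzeta_3]$. First I would note that the two assertions are essentially one: once (\ref{eq:invL}) holds with integers $h^{(c)\prime}_{a,b},h^{(c)\prime\prime}_{a,b}$, the vector $3\hzeta_3^c\omega_a=\sum_{b=1}^3(h^{(c)\prime}_{a,b}\,2\omega_b'+h^{(c)\prime\prime}_{a,b}\,2\omega_b'')$ is manifestly an integral combination of the lattice generators $2\omega_b',2\omega_b''$, hence lies in $\Lambda$. So it suffices to produce the integer coefficients in (\ref{eq:invL}). The structural inputs are that $\Lambda$ is stable under $\hzeta_3$ (the invariance $\hzeta_3\Lambda=\Lambda$ recorded just before the statement), so $\Lambda$ is a module over $\ZZ[\hzeta_3]=\ZZ[t]/(t^2+t+1)$ with $t$ acting as $\hzeta_3$ and $1+\hzeta_3+\hzeta_3^2=0$ on $\CC^3$ by (\ref{eq:Prop2.1}), together with the norm identity $(1-\hzeta_3)(1-\hzeta_3^2)=2-(\hzeta_3+\hzeta_3^2)=3$. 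The latter says $3$ lies in the ideal $(1-\hzeta_3)$ and is precisely the source of the denominator $3$ in (\ref{eq:invL}).

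Each generator in Proposition \ref{lm:omega_a}(1) has the shape $2\omega_b^{(\cdot)}=(\text{unit})\cdot(\hzeta_3-1)\cdot(\text{a difference of }\omega_a\text{'s})$; for instance $2\omega_2'=(\hzeta_3-1)\omega_2$, $2\omega_3'=\hzeta_3(\hzeta_3-1)\omega_3$, and $2\omega_1''=\hzeta_3^2(\hzeta_3-1)(\omega_1-\omega_2)=(1-\hzeta_3^2)(\omega_1-\omega_2)$. Multiplying such a relation by the conjugate factor $(1-\hzeta_3^2)$ or $(1-\hzeta_3)$ and invoking the norm identity turns the factor $(\hzeta_3-1)$ into $\pm3$; since $\Lambda$ is $\hzeta_3$-stable the left side remains in $\Lambda$, and one reads off, e.g., $3\omega_2=-(1-\hzeta_3^2)\,2\omega_2'\in\Lambda$ and $3(\omega_1-\omega_2)=(1-\hzeta_3)\,2\omega_1''\in\Lambda$. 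Carrying this out for the relations $\omega_2',\omega_3',\omega_1'',\omega_3''$ and telescoping the differences yields $3\omega_a\in\Lambda$ for every $a=1,2,3,4$ (the relation for $\omega_3''$ supplies $3\hzeta_3(\omega_3-\omega_4)$, hence $3(\omega_3-\omega_4)$ by $\hzeta_3$-stability, which with $3\omega_3$ gives $3\omega_4$). Applying $\hzeta_3^c$ and using $\hzeta_3\Lambda=\Lambda$ then gives $3\hzeta_3^c\omega_a=\hzeta_3^c(3\omega_a)\in\Lambda$ for all $a$ and $c$, establishing the first assertion.

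To extract the explicit integers of (\ref{eq:invL}) I would run the same inversion symbolically: solving the relations of Proposition \ref{lm:omega_a}(1) for the basis vectors $\hzeta_3^c\omega_a$ in terms of the $2\omega_b',2\omega_b''$, the only denominators introduced are factors $(1-\hzeta_3)^{-1}=(1-\hzeta_3^2)/3$, so after clearing them $3\hzeta_3^c\omega_a$ becomes an honest $\ZZ[\hzeta_3]$-combination of the generators; writing each coefficient in the $\ZZ$-basis $\{1,\hzeta_3\}$ and reducing any $\hzeta_3^2$ via $\hzeta_3^2=-1-\hzeta_3$ produces the integers $h^{(c)\prime}_{a,b},h^{(c)\prime\prime}_{a,b}$. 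I expect the main labor, and the only place real care is needed, to be this bookkeeping: correctly tracking the unit prefactors $\hzeta_3^{a-2}$ in Proposition \ref{lm:omega_a}(1), handling the three-term relation for $\omega_1'$, and computing the integer matrix of $\hzeta_3$ on the generating set $\{2\omega_b',2\omega_b''\}$ so that the powers $\hzeta_3^c$ can be applied uniformly. The conceptual content, by contrast, is entirely contained in the $\hzeta_3$-invariance of $\Lambda$ together with the factorization $3=(1-\hzeta_3)(1-\hzeta_3^2)$.
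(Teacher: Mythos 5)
Your proposal is correct and follows essentially the same route as the paper: both proofs multiply the decompositions of Proposition \ref{lm:omega_a}(1) by the conjugate factor to exploit $(1-\zeta_3)(1-\zeta_3^2)=3$, solve for $\omega_a$ (and hence $3\omega_a$, $3\hzeta_3^c\omega_a$) as $\frac{2}{3}$ times a $\ZZ[\hzeta_3]$-combination of the $\omega_b',\omega_b''$, and then invoke the $\hzeta_3$-invariance of $\Lambda$ to convert the $\hzeta_3$-coefficients into honest integers $h^{(c)\prime}_{a,b},h^{(c)\prime\prime}_{a,b}$. Your write-up is somewhat more explicit about the telescoping that recovers $3\omega_1$ and $3\omega_4$ and about the equivalence of the two assertions, but the underlying argument is identical.
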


Here we point out that even though the values 
$h^{(c)\prime}_{a, b}$ and $h^{(c)\prime\prime}_{a, b}$ depend 
upon the choice of the homology basis $\alpha_a$ and 
$\beta_a$, the above fact that
$3\zeta_3^c\omega_a$ is a point in $\Lambda$
does not, and thus the $\al$-function
defined below is invariant under
the action of $\Sp(6, \ZZ)$ on a 3-vector.

\begin{proof}
 From $(1-\zeta_3) (1-\zeta_3^2) =3$, we have
$$
\hzeta_3^{2a-1}(\hzeta_3^2-1)\omega_a' = \frac{3}{2}\omega_a, \quad (a= 2, 3),
\quad
\hzeta_3^{2a-2}(\hzeta_3^2 - 1)\omega_a'' 
= \frac{3}{2}(\omega_a- \omega_{a+1}), \quad (a = 1, 2, 3).
$$
Then we have the relations:
$\omega_a = \frac{2}{3}\hzeta_3^{2a-1}(\hzeta_3^2-1)\omega_a'$ 
$(a= 2, 3)$, 
$\omega_4 = \frac{2}{3}\hzeta_3^{2a-2}(\hzeta_3^2-1)(\hzeta_3\omega_3'-
\omega_3'')$, 
and
$\omega_1 = \frac{2}{3}\hzeta_3^{2a-2}(\hzeta_3^2-1)(\omega_1''+ \hzeta_3
\omega_2')$. 
Since 
 the $\ZZ$-module $\Lambda$  is invariant under the action of $\hzeta_3$,
then $2 \hzeta_3 \omega_a' \in \Lambda$ and 
$2 \hzeta_3 \omega_a'' \in \Lambda$, hence there are integers
$p^{(c)\prime}_{a, b}$, $p^{(c)\prime\prime}_{a, b}$,
$q^{(c)\prime}_{a, b}$ and $q^{(c)\prime\prime}_{a, b}$, such that
$$ 
\hzeta_3^c \omega_a' =\sum_{b=1}^3 (p^{(c)\prime}_{a, b}\omega_b' + 
      p^{(c)\prime\prime}_{a, b}\omega_b''),
\quad
\hzeta_3^c \omega_a'' =\sum_{b=1}^3 (q^{(c)\prime}_{a, b}\omega_b' + 
      q^{(c)\prime\prime}_{a, b}\omega_b''),
$$
which shows the statement.
\end{proof}

\begin{figure}
\begin{center}
\includegraphics[scale=0.7]{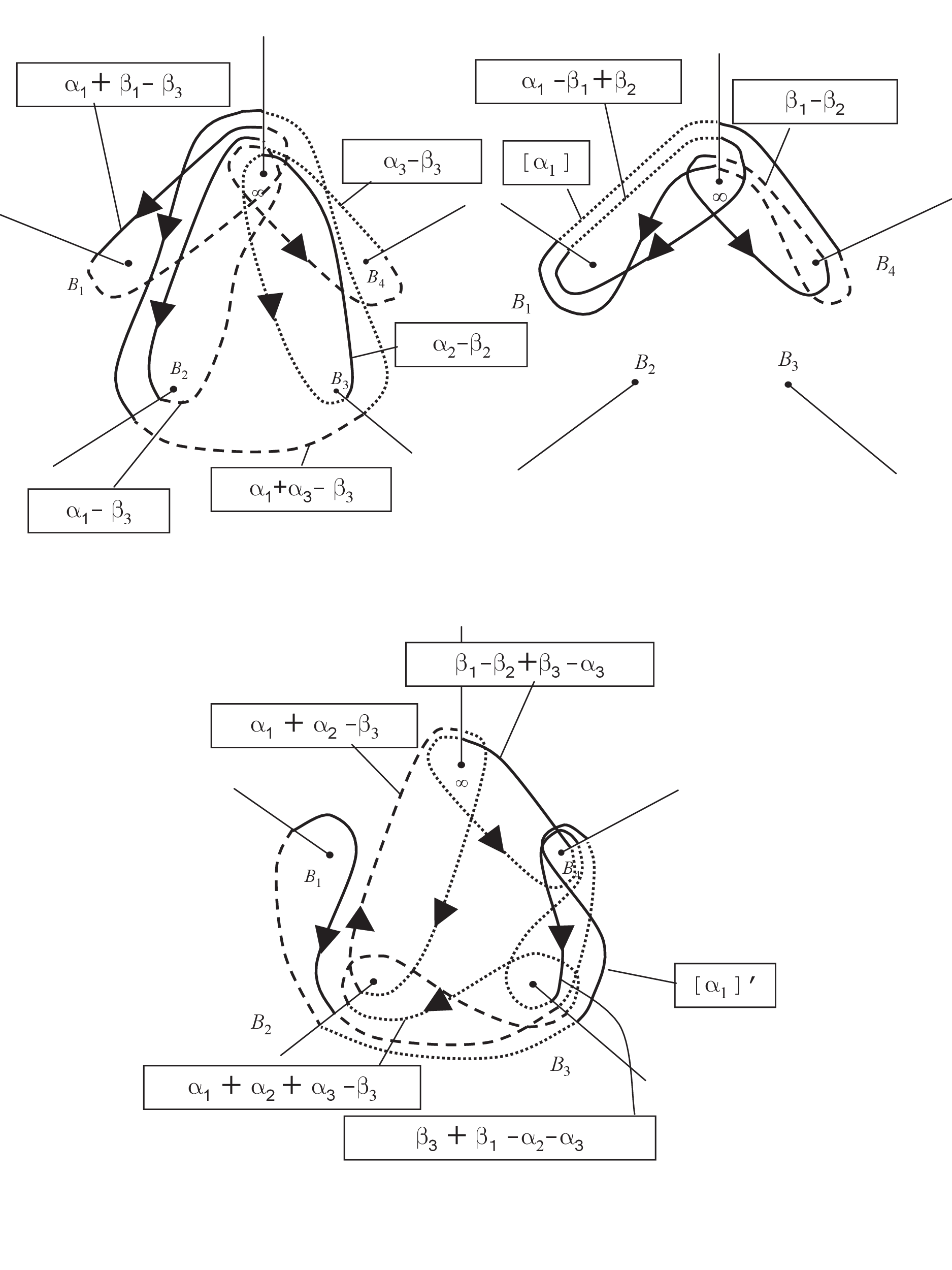}
\caption{Relations among $\alpha_i$ and $\beta_i$}
\label{fig:2}
\end{center}
\end{figure}

\begin{remark}\label{rmk:w''}
{\rm{We add 
 oriented loops $\gamma$ and $\gamma'$ up to equivalence, in the homology
group of the curve:
$\int_{\gamma+\gamma'} \nuI = \int_{\gamma} \nuI + \int_{\gamma'} \nuI$.
Using the relations in the proof of Proposition \ref{lm:omega_a},
we can compute  the $\omega$'s along the paths  in Figure 2,
where $[\alpha_1]$ 
 is an equivalence class
modulo (\ref{eq:Prop2.1}), though we routinely abuse notation and write
simply $\alpha_1$.
Using Figure 2, one checks identities such as:
$$
   3\omega_1 = 2 \omega_1' + \omega_2'' - \omega_3''.
$$
}}
\end{remark}


\begin{remark} \label{rmk:holo}
{\rm{
We summarize the local  behavior of the holomorphic one-forms for
use below. In a $t$-series expansion,
$d_\ge(t^\ell)$ denotes a  term of 
 order 
 greater than or equal to $\ell$.
\begin{enumerate}
\item
At the point $\infty$, we choose a local parameter $t_\infty $ so that
 $t_\infty^3 = 1/x$ and
 $y = \dfrac{1}{t_\infty^4} (1 + d_{\ge}(t_\infty))$;
the holomorphic one-forms are expanded as,
$$
	\nuI_1 = - t_\infty^4(1 + d_{\ge}(t_\infty)) dt_\infty, \quad
	\nuI_2 = - t_\infty^3(1 + d_{\ge}(t_\infty)) dt_\infty, \quad
	\nuI_3 = - (1 + d_{\ge}(t_\infty)) dt_\infty.
$$

\item
At $(b_a, 0)$, a local parameter $t_a$ is chosen so that $t_a^3 = (x-b_a)$.
Then we have
$$
    y = t_a  C_{a} (1 + d_{\ge}(t_a^2)), \quad
	d x = 3 t_a^2 dt_a,
$$
where  $C_{a}:= 
\displaystyle{\left.\sqrt[3]{\frac{d f(x)}{d x}}\right|_{x = b_a}}$.
The holomorphic one-forms are written as,
$$
	\nuI_1 = 
    \frac{dt_a}{C_{a}^2} (1 + d_{\ge}(t_a)),
    \quad \nuI_3 = 
    \frac{t_a dt_a}{C_{a}} (1 + d_{\ge}(t_a)),
    \quad \nuI_2 - b_1 \nuI_1 = 
    \frac{t_a^3 dt_a}{C_{a}^2} (1 + d_{\ge}(t_a)).
$$
The local chart is a triple covering of the curve projected to the $x$-axis,
so there is a natural action 
$\hat\iota^{(a)}_{\zeta_3} : t_a \to \zeta_3 t_a$.
Since $y$ is also a local parameter at a branch point $B_a$, locally
we can identify the action $\hat\iota^{(a)}_{\zeta_3}$  with $\hat \zeta_3$. 
\end{enumerate}
}}
\end{remark}


 The following 
meromorphic functions $\{\phi_i\}$ on $X$ belong to 
the  ring $R := \CC[x, y] / (y^3 - \prod(x - b_r))$,
\begin{gather}
	\phi_i = \left\{
        \begin{matrix} 
     x & \mbox{for } i = 3 \\
     y & \mbox{for } i = 4 \\
     x^{(i-3)/3 + 2} & \mbox{for } i > 3, i \equiv 0  \mbox{ modulo } 3,\\
     x^{(i-3)/3 + 1}y & \mbox{for } i > 3, i \equiv 1 \mbox{ modulo } 3,\\
     x^{(i-3)/3 }y^2 & \mbox{for } i > 3, i \equiv 2  \mbox{ modulo } 3.
        \end{matrix} \right.
\end{gather}
In particular, $\phi_n$ has a
pole of order  $N(n)$ at $\infty$,
with:
\begin{gather}
       N(0) = 0, \quad
       N(1) = 3, \quad
       N(2) = 4, \quad
       N(3) = 6, \quad
       N(n) = n + 3, \quad \mbox{for } n > 3.
\end{gather}

Lastly, we identify the Jacobian with $\mathrm{Pic}^0(X)$
by choosing $\infty$ as the base point, so we embed $X$ in 
$\mathcal{J}$ by sending 
 a point $P\in X$ to
 the sheaf associated to the divisor  $P - \infty$
(up to linear equivalence).

\section{Addition law. I}

The additive inverse
in $\CC^3$ and $\JJ$ corresponds to a bijection from
 $S^3 X$ to itself, which depends on our
choice of base point. We note that in the hyperelliptic case
this corresponds simply to  the hyperelliptic involution $(x, y) \to (x, -y)$,
in the (cyclic) trigonal  case
we need a modification as in \cite[Lemma 2.6]{MP}.

We now give an explicit realization of 
the Serre involution on $\mathrm{Pic}^{(g-1)}$:
$$
\mathcal{L} \to K_X \mathcal{L}^{-1},
$$
which will be used in the development below.

The following proposition \cite[Lemma 2.6]{MP} holds:
\begin{proposition} \label{prop:addition}
For a positive integer $n$, there
is a natural inclusion satisfying
$$
[-1] :  \WW_n \to  \WW_{N(n) - n}, \quad
(u \mapsto -u).
$$
\end{proposition}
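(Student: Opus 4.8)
The plan is to realize the class $-u$ explicitly by cutting the divisor that represents $u$ out of the complete linear system associated to $N(n)\,\infty$, and then to identify the residual zeros as an effective divisor of the required degree. Fix $u\in\WW_n$ and choose a preimage $(P_1,\dots,P_n)\in S^n\Gamma_\infty X$; under the identification $\JJ\cong\mathrm{Pic}^0(X)$ with base point $\infty$ we have $u=[\,\sum_{i=1}^n P_i-n\infty\,]$, hence $-u=[\,n\infty-\sum_{i=1}^n P_i\,]$. By definition $\WW_{N(n)-n}$ is the locus of classes $[\,D'-(N(n)-n)\infty\,]$ with $D'$ effective of degree $N(n)-n$, and adjoining copies of the base point (a constant path contributes $0$ to the Abel map) gives $\WW_k\subseteq\WW_{k'}$ for $k\le k'$. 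So it suffices to produce an effective divisor $D'$ of degree at most $N(n)-n$ with $D'\sim N(n)\infty-\sum_i P_i$; then $-u=[\,D'-(\deg D')\infty\,]\in\WW_{\deg D'}\subseteq\WW_{N(n)-n}$.

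First I would record the dimension count that makes $N(n)-n$ the correct target. Let $L(N(n)\infty)$ denote the space of functions on $X$ holomorphic away from $\infty$ with a pole of order at most $N(n)$ there. It is spanned precisely by $\phi_0,\dots,\phi_n$: these lie in $R$ and have the strictly increasing pole orders $0=N(0)<N(1)<\dots<N(n)$, so they are linearly independent, while any element of $R$ with pole order $\le N(n)$ is a combination of the semigroup monomials $x^a y^b$ ($0\le b\le 2$) of pole order $\le N(n)$, i.e.\ of the $\phi_j$ with $j\le n$. Hence $\dim L(N(n)\infty)=\#\{j:N(j)\le N(n)\}=n+1$ for every $n$ (Riemann--Roch confirms this once $N(n)\ge 2g-1$, the divisor $N(n)\infty$ then being nonspecial).

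Next, imposing vanishing at $P_1,\dots,P_n$ (to the appropriate multiplicity if they are not distinct) is at most $n$ linear conditions on the $(n{+}1)$-dimensional space $L(N(n)\infty)$, so there is a nonzero $\phi\in L(N(n)\infty)$ with $(\phi)_0\ge\sum_{i=1}^n P_i$. Let $k\le N(n)$ be the exact pole order of $\phi$ at its only pole $\infty$. Since $\deg(\phi)=0$ on the compact surface $X$, the zero divisor $(\phi)_0$ is effective of degree $k$ and $(\phi)=(\phi)_0-k\infty$; moreover $(\phi)_0\ge\sum_i P_i$ forces $k\ge n$. Writing $(\phi)_0=\sum_i P_i+D'$ with $D'$ effective of degree $k-n\le N(n)-n$, the relation $(\phi)\sim 0$ yields $D'\sim k\infty-\sum_i P_i$, so that $[\,D'-(k-n)\infty\,]=[\,n\infty-\sum_i P_i\,]=-u$. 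This exhibits $-u\in\WW_{k-n}\subseteq\WW_{N(n)-n}$, giving the asserted containment; injectivity of the map $u\mapsto -u$ on $\WW_n$ is automatic, since $[-1]$ is a bijection of $\JJ$.

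The step I expect to be the crux is the dimension equality $\dim L(N(n)\infty)=n+1$ of the second paragraph, forced by $N(n)$ being the $(n{+}1)$st element of the Weierstrass semigroup at $\infty$: it is exactly this that pins down $N(n)-n$ as the number of residual zeros, and hence as the correct target stratum. The remaining work is bookkeeping of degenerate cases --- the residual divisor $D'$ may collide with the $P_i$, may have repeated points, or may have degree $k-n<N(n)-n$ when $\phi$ fails to attain full pole order --- but each of these only pushes the image into a smaller stratum $\WW_{k-n}$, which is harmlessly contained in $\WW_{N(n)-n}$. Finally, taking $n=2$, where $N(2)=4$ and the canonical class satisfies $K_X\sim 4\infty$, recovers precisely the Serre involution $\mathcal{L}\mapsto K_X\mathcal{L}^{-1}$ on $\mathrm{Pic}^{g-1}$ recalled before the statement, so the proposition is the natural extension of that involution across all the strata.
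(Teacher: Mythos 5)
Your argument is correct and is essentially the approach the paper takes: the paper defers the statement to \cite[Lemma 2.6]{MP} and then realizes the map concretely through the determinant $\mu_n(P;P_1,\dots,P_n)$ of Definition \ref{def:mul}, which is exactly the nonzero element of $L(N(n)\infty)=\langle 1,\phi_1,\dots,\phi_n\rangle$ vanishing at the $P_i$ whose existence you extract from the dimension count $\dim L(N(n)\infty)=n+1$. Your bookkeeping of the residual zeros (degree $N(n)-n$, possible degeneration into smaller strata $\WW_{k-n}$) matches the listed properties of $\mu_n$ and Lemma \ref{prop:2theta2}.
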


We construct an algorithm to give the $[-1]$-action 
explicitly in order to  define the trigonal $\al$ function.

\begin{definition} \label{def:mul}
For $(P, (P_1, \cdots, P_n))$ $\in (X-\infty) \times S^n(X-\infty)$,
we define $\mu_n(P)$ by
\begin{gather*}
\mu_n(P): = 
\mu_n(P; P_1,  \ldots, P_n): = 
\lim_{P_i' \to P_i}
\dfrac{
\left|\begin{matrix}
1 &\phi_1(P_1') & \cdots  &\phi_{n}(P_1') \\
1 &\phi_1(P_2') & \cdots  &\phi_{n}(P_2') \\
\vdots & \vdots & \ddots& \vdots \\
1 &\phi_1(P_{n}')&\cdots  &\phi_{n}(P_{n}')\\
1 &\phi_1(P)&\cdots   &\phi_{n}(P)
\end{matrix}\right|}
{\left|\begin{matrix}
1 &\phi_1(P_1') & \cdots &\phi_{n-1}(P_1') \\
1 &\phi_1(P_2') & \cdots &\phi_{n-1}(P_2') \\
\vdots & \vdots & \ddots& \vdots\\
1 &\phi_1(P_{n}')&\cdots &\phi_{n-1}(P_{n}')
\end{matrix}\right|},
\end{gather*}
and since the $\phi$'s are algebraic functions on $X$,
we extend the domain to $X\times S^n(X)$,
allowing for poles.
\end{definition}

More details on this function are given in \cite{MP},
 where it is shown that it can be viewed as a generalization of
the $F$ in (\ref{eq:al_F})  or  $U$
 in the triple of polynomials that  Mumford 
calls $(U,V,W)$ in the hyperelliptic case \cite[Ch.~IIIa]{Mum}.

\bigskip
In the following Lemma, we show that $\mu_n(P)$ is associated with
 the addition structure  on the divisor group of
$X$ from a classical viewpoint.
Let $S^n_1(X)$ be defined as
$$
\Bigr\{(P_1, \ldots, P_n) \in S^n(X) 
           \ | \ \exists \{{i_1}, \ldots, {i_k}\} \subset \{1, \ldots, n\}
 \mbox{ such that }
           \mu_k(P_{i_1}, \ldots, P_{i_k}) = 0\Bigr\}.
$$

For $(P_i)_{i=1,\ldots, n}\in S^n(X -\infty) \setminus S^n_1(X-\infty) $,
$\mu_n(P; P_1, \ldots, P_n)$ has the following properties:
\begin{enumerate}
\item It is monic,
\item At each $P=P_i$, it has a simple zero, 
\item $\mu_n(P)$ has a pole at $\infty$ of order $N(n)$, and
\item $\mu_n(P)$ has $(N(n) - n)$ zeros 
aside from the $P_i$'s $(i=1, \ldots, n)$.
\end{enumerate}

\begin{lemma}\label{prop:2theta2}
Let $n$ be a positive integer.
For $(P_i)_{i=1,\ldots, n}\in S^n(X -\infty) $,
 $\mu_n$ is consistent with the following diagram, where $[-1]_n$:
$S^n(X -\infty)  \to S^{N(n) - n}(X)$ such that 
$$
\begin{CD}
S^n(X -\infty) @>{[-1]_n}>> S^{N(n) - n}(X) \\
  @V{w}VV @V{w}VV \\
\WW_n @>{[-1]}>> \WW_{N(n) - n} \\
\end{CD},
$$
{\it{i.e.}}, 
$(P_i)_{i=1,\ldots, n}\in S^n(X -\infty) $ corresponds to
an element $(Q_i)_{i=1,\ldots, N(n)-n}\in S^{N(n)-n}(X)$,
such that
$$
\sum_{i=1}^{n} P_i - n \infty 
\sim - \sum_{i=1}^{N(n)-n} Q_i  + (N(n)-n) \infty .
$$
\end{lemma}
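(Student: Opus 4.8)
The plan is to exhibit the correspondence $[-1]_n$ explicitly using the function $\mu_n(P;P_1,\ldots,P_n)$ and then verify that the resulting divisors close up the claimed diagram. First I would observe that, by the four listed properties of $\mu_n$, the function $\mu_n(P)$ is a genuine element of the ring $R$ (it is a polynomial in the $\phi_i$, hence meromorphic on $X$ with its only pole at $\infty$), and as a function on $X$ it has the principal divisor
$$
(\mu_n) \;=\; \sum_{i=1}^{n} P_i \;+\; \sum_{i=1}^{N(n)-n} Q_i \;-\; N(n)\,\infty,
$$
where the $Q_i$ are precisely the $N(n)-n$ zeros of $\mu_n$ other than the prescribed $P_i$ (property (4)), the order of the pole at $\infty$ being $N(n)$ (property (3)) and each $P_i$ contributing a simple zero (property (2)). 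This is where the hypothesis $(P_i)\in S^n(X-\infty)\setminus S^n_1(X-\infty)$ is used: excluding $S^n_1$ guarantees that the Wronskian-type determinant in the denominator of Definition \ref{def:mul} is nonvanishing, so that $\mu_n$ is well defined, monic (property (1)), and has exactly the expected zero/pole structure with no cancellation, so the $Q_i$ really are a well-defined unordered $(N(n)-n)$-tuple. I would then \emph{define} $[-1]_n$ on $S^n(X-\infty)$ by sending $(P_i)_{i=1,\ldots,n}$ to $(Q_i)_{i=1,\ldots,N(n)-n}$.

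Next I would read off the linear equivalence directly from this principal divisor. Since $(\mu_n)\sim 0$, rearranging gives
$$
\sum_{i=1}^{n} P_i - n\infty \;\sim\; -\sum_{i=1}^{N(n)-n} Q_i + (N(n)-n)\infty,
$$
which is exactly the displayed relation in the statement; this is the content of the bottom arrow, namely $w\bigl((P_i)\bigr)\in\WW_n$ is carried to its additive inverse $-w\bigl((P_i)\bigr)=w\bigl((Q_i)\bigr)\in\WW_{N(n)-n}$. To see that the target lands in $\WW_{N(n)-n}$ and that the two vertical Abel maps intertwine the horizontal arrows, I would invoke Proposition \ref{prop:addition}, which already furnishes the inclusion $[-1]\colon\WW_n\to\WW_{N(n)-n}$, $u\mapsto -u$, on the Jacobian side; the computation above shows that $w\circ[-1]_n=[-1]\circ w$, so the square commutes.

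The main obstacle I expect is the bookkeeping that makes the divisor count come out to exactly $N(n)-n$ and that pins down $[-1]_n$ as a well-defined map into $S^{N(n)-n}(X)$ rather than merely into divisor classes. Concretely, one must confirm that the degree of the pole at $\infty$ is $N(n)$ by checking the top-row expansions in the determinant against the pole orders recorded in the definition of $N(n)$, and that the numerator determinant, a Wronskian in the limit $P_i'\to P_i$, does not acquire spurious zeros or a reduced pole order when some $P_i$ coincide. The local behaviour of the $\phi_i$ at $\infty$ (governed by the gap sequence encoded in $N(n)$) and at the $P_i$ does all the work here, and the exclusion of $S^n_1(X-\infty)$ removes the degenerate configurations; once these local analyses are in hand, properties (1)--(4) follow and the lemma is immediate. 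I would therefore organize the proof as: (i) well-definedness and monicity of $\mu_n$ off $S^n_1$, (ii) the pole/zero divisor computation yielding properties (2)--(4), and (iii) the resulting principal divisor and commuting diagram.
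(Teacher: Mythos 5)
Your proof is correct and follows exactly the route the paper intends: the paper itself omits a written proof of this lemma (deferring to the reference [MP] and the four listed properties of $\mu_n$), and the intended argument is precisely your observation that $\mu_n(\cdot\,;P_1,\ldots,P_n)$ is an element of $R$ with principal divisor $\sum P_i+\sum Q_j-N(n)\infty$, from which the linear equivalence and the commuting square follow. Your attention to the role of $S^n_1(X-\infty)$ in guaranteeing nonvanishing of the denominator determinant, and to the monic leading term $\phi_n$ fixing the pole order $N(n)$ at $\infty$, supplies exactly the bookkeeping the paper leaves implicit.
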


\bigskip

\section{Addition law: examples}
\label{sec:-1}

In this section, we compute  $[-1]_n$ $(n=1, 2, 3)$ explicitly,
based on Lemma \ref{prop:2theta2}, to demonstrate the significance of $\mu_n$.

\subsection{$[-1]_1P$:}
To find $[-1]_1(x_1, y_1)$, recall that N(1)=3, so
we seek a divisor of degree two.
For the divisor of a meromorphic function of $(x, y)$,
\begin{gather*}
\left|\begin{matrix}
1 & x_1 \\
1 & x \\
\end{matrix}\right| =0
\end{gather*}
implies that $x=x_1.$
This means that 
$$ 
   (x_1, y_1) + (x_1,\zeta_3 y_1) + (x_1,\zeta_3^2 y_1) -3\infty \sim 0. 
$$
In other words, as sheaves,
\begin{equation}
[-1]_1 (x_1, y_1) =  (x_1,\zeta_3 y_1) + (x_1,\zeta_3^2 y_1).  
\label{eq:[-1]}
\end{equation}
This means 
$$
  \int_{\infty}^{(x_1, y_1)} 
    \begin{pmatrix} \nuI_1\\ \nuI_2\\ \nuI_3 \end{pmatrix}
  + \int_{\infty}^{(x_1, \zeta_3 y_1)} 
    \begin{pmatrix}\zeta_3^2 \nuI_1\\ \zeta_3^2 \nuI_2\\ \zeta_3 \nuI_3\end{pmatrix}
  +  \int_{\infty}^{(x_1, \zeta_3^2 y_1)} 
    \begin{pmatrix}\zeta_3 \nuI_1\\ \zeta_3 \nuI_2\\ \zeta_3^2 \nuI_3 \end{pmatrix}
\equiv 0, \quad \mbox{modulo } \Lambda.
$$
%

\subsection{$[-1]_2(P_1 + P_2)$:}

Let $Q_a = (x'_a, y'_a)$ ($a=1,2$) be a  solution of $\mu_3(P, P_1, P_2)$
different from   $P_a$, i.e.,
$$
	w(Q_1 + Q_2) = -w(P_1 + P_2), 
	\quad{i.e.},
        (Q_1 , Q_2) = [-1]_2(P_1 , P_2).
$$
Since
\begin{gather}
\left| \begin{matrix}
1 & x_1 & y_1 \\
1 & x_2 & y_2 \\
1 & x_a'   & y_a'   \\
\end{matrix} \right| =
\left| \begin{matrix}
 x_1 & y_1 \\
 x_2 & y_2 \\
\end{matrix} \right| -
\left| \begin{matrix}
1 &  y_1 \\
1 &  y_2 \\
\end{matrix} \right| x_a' +
\left| \begin{matrix}
1 & x_1  \\
1 & x_2  \\
\end{matrix} \right| y_a' = 0,
\label{eq:4.2}
\end{gather}
then direct computations provide the following lemma:
\begin{lemma} \label{lm:-1two}
For the points obeying (\ref{eq:4.2})
the following relations hold:
\begin{gather*}
\begin{split}
&\left| \begin{matrix}
 x_1 & y_1 \\
 x_2 & y_2 \\
\end{matrix} \right| 
\left| \begin{matrix}
1 &  y_1' \\
1 &  y_2' \\
\end{matrix} \right| =
\left| \begin{matrix}
 x_1' & y_1' \\
 x_2' & y_2' \\
\end{matrix} \right| 
\left| \begin{matrix}
1 &  y_1 \\
1 &  y_2 \\
\end{matrix} \right|, \quad 
\left| \begin{matrix}
 x_1 & y_1 \\
 x_2 & y_2 \\
\end{matrix} \right| 
\left| \begin{matrix}
1 &  x_1' \\
1 &  x_2' \\
\end{matrix} \right| =
\left| \begin{matrix}
 x_1' & y_1' \\
 x_2' & y_2' \\
\end{matrix} \right| 
\left| \begin{matrix}
1 &  x_1 \\
1 &  x_2 \\
\end{matrix} \right|, \\
& \left| \begin{matrix}
1 &  y_1 \\
1 &  y_2 \\
\end{matrix} \right| 
\left| \begin{matrix}
1 &  x_1' \\
1 &  x_2' \\
\end{matrix} \right| =
\left| \begin{matrix}
1 &  y_1' \\
1 &  y_2' \\
\end{matrix} \right| 
\left| \begin{matrix}
1 &  x_1 \\
1 &  x_2 \\
\end{matrix} \right|.
\end{split}
\end{gather*}
\end{lemma}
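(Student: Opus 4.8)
The plan is to read hypothesis (\ref{eq:4.2}) geometrically. The vanishing of the $3\times3$ determinant appearing there is exactly the equation of the line through $P_1=(x_1,y_1)$ and $P_2=(x_2,y_2)$ in the affine plane, so the assumption that $Q_1=(x_1',y_1')$ and $Q_2=(x_2',y_2')$ satisfy it says precisely that the four points $P_1,P_2,Q_1,Q_2$ are collinear. This is the $[-1]_2$ picture of Lemma \ref{prop:2theta2}: in the generic (non-vertical) case the chord is the zero locus of a function with a pole of order $N(2)=4$ at $\infty$, so it meets $y^3=f(x)$ in exactly those four points, whence $P_1+P_2+Q_1+Q_2\sim 4\infty$, i.e. $w(Q_1+Q_2)=-w(P_1+P_2)$.

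From the expanded form in (\ref{eq:4.2}) the line through $P_1,P_2$ is $D_{xy}-D_yx+D_xy=0$, where $D_{xy}=x_1y_2-x_2y_1$, $D_y=y_2-y_1$, $D_x=x_2-x_1$ are the three $2\times2$ minors occurring in the lemma, and the primed quantities $D_{xy}',D_y',D_x'$ built from $Q_1,Q_2$ describe the line through $Q_1,Q_2$. The three asserted identities are then precisely the vanishing of the three $2\times2$ minors of
$$\left(\begin{matrix} D_{xy} & D_y & D_x \\ D_{xy}' & D_y' & D_x' \end{matrix}\right),$$
that is, the proportionality of the two coefficient vectors. Conceptually this is forced, since both vectors describe the single line through the distinct points $P_1,P_2$ and so must agree up to scale.

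For the actual proof I would not invoke uniqueness of the line but derive each identity directly from the two instances $a=1,2$ of (\ref{eq:4.2}), call them $(*)_1$ and $(*)_2$. Subtracting $(*)_2$ from $(*)_1$ cancels the $D_{xy}$-term and gives the third relation $D_yD_x'=D_xD_y'$; the combination $y_2'(*)_1-y_1'(*)_2$ cancels the $D_x$-term and gives the first relation $D_{xy}D_y'=D_yD_{xy}'$; and $x_2'(*)_1-x_1'(*)_2$ cancels the $D_y$-term and gives the second relation $D_{xy}D_x'=D_xD_{xy}'$, each by a single cancellation. Because the identities follow from (\ref{eq:4.2}) by pure linear combination, there is no real obstacle: the only thing to get right is the bookkeeping of which multiplier annihilates which column. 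The one conceptual caveat, relevant only if one wishes to read the conclusion as ``the two chords coincide'' rather than as three formal identities, is non-degeneracy — one needs $Q_1\ne Q_2$ (equivalently $(1,x_1',y_1')$ and $(1,x_2',y_2')$ independent, and the chord non-vertical) so that proportionality genuinely determines a single line; this is guaranteed by the standing restriction to tuples outside $S^n_1(X-\infty)$.
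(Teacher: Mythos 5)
Your proposal is correct and takes essentially the same route as the paper: the paper simply displays the cofactor expansion in (\ref{eq:4.2}) and asserts that ``direct computations'' yield the three identities, and the linear combinations $(*)_1-(*)_2$, $y_2'(*)_1-y_1'(*)_2$, $x_2'(*)_1-x_1'(*)_2$ you write out are exactly that computation (each one checks). The geometric preamble about collinearity and proportionality of the coefficient vectors is a nice gloss but plays no role in the derivation.
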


This shows that if $P_1$ and $P_2$ are
generic in the sense that none of the determinants in
Lemma \ref{lm:-1two} vanishes, then
$Q_1$ and $Q_2$ have the same property.

\subsection{$[-1]_3(P_1+P_2+P_3)$}
Let 
$$
(Q_1, Q_2, Q_3) = [-1]_3(P_1, P_2, P_3).
$$
In other words $Q_{a} = (x'_a, y'_a)$ ($a = 1, 2, 3$) are
 solutions of $\mu_3(P, P_1, P_2, P_3)$
which differ from $P_b$ ($b = 1, 2, 3$).
\begin{gather}
\left| \begin{matrix}
1 & x_1 & y_1 & x_1^2 \\
1 & x_2 & y_2 & x_2^2 \\
1 & x_3 & y_3 & x_3^2 \\
1 & x'_a   & y'_a  & {x'}_a^2   \\
\end{matrix} \right| = 0.
\label{eq:III}
\end{gather}

Similar to Lemma \ref{lm:-1two}, we have the following
result:
\begin{lemma} \label{lm:-1three}
For $\{ i_1, i_2, i_3\} , \{ j_1, j_2, j_3\}
\subset\{ 0, 1, 2,3\}$, the following relation holds
\begin{gather*}
\begin{split}
&
\left|\begin{matrix}
\phi_{i_1}(P_1) &\phi_{i_2}(P_1) &\phi_{i_3}(P_1)\\
\phi_{i_1}(P_2) &\phi_{i_2}(P_2) &\phi_{i_3}(P_2)\\
\phi_{i_1}(P_3) &\phi_{i_2}(P_3) &\phi_{i_3}(P_3)\\
\end{matrix}\right|
\left|\begin{matrix}
\phi_{j_1}(Q_1) &\phi_{j_2}(Q_1) &\phi_{j_3}(Q_1)\\
\phi_{j_1}(Q_2) &\phi_{j_2}(Q_2) &\phi_{j_3}(Q_2)\\
\phi_{j_1}(Q_3) &\phi_{j_2}(Q_3) &\phi_{j_3}(Q_3)\\
\end{matrix}\right|\\
=&\epsilon_{i_1, i_2, i_3, j_1, j_2, j_3}
\left|\begin{matrix}
\phi_{j_1}(P_1) &\phi_{j_2}(P_1) &\phi_{j_3}(P_1)\\
\phi_{j_1}(P_2) &\phi_{j_2}(P_2) &\phi_{j_3}(P_2)\\
\phi_{j_1}(P_3) &\phi_{j_2}(P_3) &\phi_{j_3}(P_3)\\
\end{matrix}\right|
\left|\begin{matrix}
\phi_{i_1}(Q_1) &\phi_{i_2}(Q_1) &\phi_{i_3}(Q_1)\\
\phi_{i_1}(Q_2) &\phi_{i_2}(Q_2) &\phi_{i_3}(Q_2)\\
\phi_{i_1}(Q_3) &\phi_{i_2}(Q_3) &\phi_{i_3}(Q_3)\\
\end{matrix}\right|,
\end{split}
\end{gather*}
where $\epsilon_{i_1, i_2, i_3, j_1, j_2, j_3}$ is an appropriate
sign.
\end{lemma}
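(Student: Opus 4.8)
The plan is to mimic the proof of Lemma~\ref{lm:-1two}, reducing the claimed quadratic identities to the single statement that two vectors of maximal minors are parallel. Fix $(P_1,P_2,P_3)$ generic, meaning $(P_i)\in S^3(X-\infty)\setminus S^3_1(X-\infty)$, and write $M_P$ for the $3\times 4$ matrix whose $(i,k)$ entry is $\phi_k(P_i)$ with $\phi_0=1$, $\phi_1=x$, $\phi_2=y$, $\phi_3=x^2$; define $M_Q$ likewise from $Q_1,Q_2,Q_3$. For a $3$-element subset $I=\{i_1,i_2,i_3\}\subset\{0,1,2,3\}$ let $\Delta_P(I)$ (resp. $\Delta_Q(I)$) denote the corresponding maximal minor of $M_P$ (resp. $M_Q$), taken with columns in increasing order.

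First I would expand the $4\times 4$ determinant in (\ref{eq:III}) along its last row. This exhibits it, as a function of $(x'_a,y'_a)$, as the inner product of the row vector $(1,x'_a,y'_a,{x'_a}^2)$ with the cofactor vector $v_P$ whose $k$-th entry is (up to an overall harmless sign) $(-1)^k\,\Delta_P(\{0,1,2,3\}\setminus\{k\})$, built from the maximal minors of $M_P$. Since (\ref{eq:III}) holds for each $a=1,2,3$, the vector $v_P$ is orthogonal to all three rows of $M_Q$.

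The key step is then pure linear algebra. For $(P_i)$ generic the three rows of $M_Q$ are linearly independent in $\CC^4$; this is where I use $(P_i)\notin S^3_1$, together with the fact, proved exactly as in the observation recorded after Lemma~\ref{lm:-1two}, that a generic $(P_i)$ produces a generic $(Q_a)$. Hence the orthogonal complement of the row span of $M_Q$ is one-dimensional, spanned by the generalized cross product of the three rows, whose entries are precisely the cofactor vector $v_Q$ of $M_Q$. Therefore $v_P=\lambda\,v_Q$ for some scalar $\lambda$. Writing $w_k,w'_k$ for the $k$-th entries of $v_P,v_Q$, this gives $w_k w'_l=w'_k w_l$, i.e. $\Delta_P(\widehat k)\,\Delta_Q(\widehat l)=\Delta_P(\widehat l)\,\Delta_Q(\widehat k)$, where $\widehat k$ denotes the complement of $k$ and the cofactor signs $(-1)^k(-1)^l$ cancel. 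Rewriting $\widehat k,\widehat l$ as arbitrary ordered index triples $\{i_1,i_2,i_3\},\{j_1,j_2,j_3\}$ reorders the columns inside each minor and so introduces exactly the permutation sign recorded as $\epsilon_{i_1,i_2,i_3,j_1,j_2,j_3}$, yielding the stated identity. Finally, both sides are polynomials in the coordinates of $P_1,P_2,P_3,Q_1,Q_2,Q_3$, so the identity extends from the generic locus to all configurations by continuity.

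I expect the genericity bookkeeping to be the main obstacle: one must verify that $M_Q$ has rank $3$ (equivalently $v_Q\neq 0$) away from $S^3_1$, so that the orthogonal complement is exactly one-dimensional and the relation $v_P=\lambda v_Q$ is forced; the determination of the sign $\epsilon$ is then a routine permutation count. A conceptual cross-check is available through Proposition~\ref{prop:addition} and Riemann--Roch: the defining relation $\sum_i P_i+\sum_a Q_a\sim 6\infty=N(3)\infty$ means that $v_P$ and $v_Q$ are the coordinate vectors, in the basis $1,x,y,x^2$ of $L(6\infty)$, of the essentially unique function with divisor $\sum_i P_i+\sum_a Q_a-6\infty$, so they must be proportional, which is an independent route to $v_P=\lambda v_Q$.
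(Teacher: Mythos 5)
Your proof is correct. Note that the paper itself supplies no argument for Lemma \ref{lm:-1three}: it is stated immediately after the phrase ``Similar to Lemma \ref{lm:-1two}, we have the following result,'' and Lemma \ref{lm:-1two} is itself justified only by ``direct computations'' from the defining relation. What you have done is organize that computation conceptually: expanding (\ref{eq:III}) along its last row shows that every row of $M_Q$ annihilates the cofactor vector $v_P$ of $M_P$, and since the annihilator of a rank-$3$ subspace of $\CC^4$ under the bilinear pairing is the line spanned by the cofactor vector $v_Q$, one gets $v_P=\lambda v_Q$ and hence the cross-multiplied minor identities; this is exactly the content of the lemma, with the sign $\epsilon$ absorbed by column reordering. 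The only point needing care is the rank of $M_Q$, which you correctly flag; it is settled cheaply by observing that for generic $(P_i)$ the three $Q_a$ are distinct with distinct $x$-coordinates, so the Vandermonde minor $\Delta_Q(\{0,1,3\})$ is already nonzero (and in the degenerate case $v_Q=0$ both sides of the identity vanish, so nothing is lost). Your Riemann--Roch cross-check --- that $v_P$ and $v_Q$ are both coordinate vectors, in the basis $1,x,y,x^2$ of $L(6\infty)$, of the unique (up to scalar) function with divisor $\sum_i P_i+\sum_a Q_a-6\infty$ --- is a genuinely different and more geometric route to the same proportionality, and it explains \emph{why} the lemma is true rather than merely verifying it; it also connects the statement directly to Proposition \ref{prop:addition}. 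Either argument is a satisfactory replacement for the proof the paper omits.
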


\bigskip

\section{Functions $A_r$ and $F_r$}\label{functions}

In order to construct the trigonal $\al_r$ function of the curve $X$,
we introduce meromorphic functions $A_r$ and $F_r$.

On a hyperelliptic curve, as shown in (\ref{eq:al_F}),
the $\al$ function is 
alternatively defined by
$\al_r(u) := \sqrt{F(b_r)}$ up to a constant factor, 
where $(x, y) = (b_r, 0) = B_r$ is
a branch point of the curve.

In order to define the trigonal version of $\al$ function,
we also deal with the value of the function
$\mu$ in (\ref{eq:defAF}) at a  branch
point $B_r$.


\begin{definition}
For a branch point $B_a$ of $X$ and 
$P_i = (x_i, y_i)$ $(i = 1, 2, 3)$ determining a point of
$S^3 X$, we define the meromorphic functions:
\begin{gather}
\begin{split}
A_a(P_1, P_2, P_3)&:=
\mu(B_a; P_1, P_2, P_3) = \displaystyle{ \frac{
     \left| \begin{matrix}
     1 & x_1 & y_1 & {x_1}^2 \\
     1 & x_2 & y_2 & {x_2}^2 \\
     1 & x_3 & y_3 & {x_3}^2 \\
     1 & b_a & 0   & b_a^2 \\
     \end{matrix} \right|} {
     \left| \begin{matrix}
     1 & x_1 & y_1  \\
     1 & x_2 & y_2  \\
     1 & x_3 & y_3  \\
     \end{matrix} \right|}},\\
F_a(P_1, P_2, P_3)&:= 
(b_a - x_1) (b_a - x_2) (b_a - x_3).\\
\end{split}
\label{eq:defAF}
\end{gather}
\end{definition}


Let $\deg_P h$ denote the order of
zero or pole of a meromorphic function $h$
 at $P$. 

\begin{proposition}\label{prop:Azero}
$A_a$ and $F_a$ have the following zeros and poles:
\begin{enumerate}
\item
For generic points $P_1, P_2$ of $X$,
$$
 \deg_{(B_a, P_1, P_2)} A_a(P_1, P_2, P_3) = 1,  \quad
 \deg_{(B_a, P_1, P_2)} F_a(P_1, P_2, P_3) = 3.  \quad
$$

\item
For generic points $P_1, P_2$ of $X$,
$$
 \deg_{[-1]_3([-1]_2(P_1, P_2), B_a)} A_a = 1,  \quad
 \deg_{[-1]_3([-1]_2(P_1, P_2), B_a)} F_a = 0.  \quad
$$

\item
For generic points $P_1, P_2$ of $X$,
$$
 \deg_{(\infty, P_1, P_2)} A_a = -2,  \quad
 \deg_{(\infty, P_1, P_2)} F_a = -3.  \quad
$$

\item
For generic points $P_1$ of $X$,
$$
 \deg_{([-1](b_a, 0), P_1)} A_a = 3,  \quad
 \deg_{([-1](b_a, 0), P_1)} F_a = 6.  \quad
$$
\end{enumerate}
\end{proposition}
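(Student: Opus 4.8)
The plan is to regard $A_a$ and $F_a$ as meromorphic functions on $S^3X$ and, for each of the four configurations, to freeze all but the colliding point(s) at generic values; the order then becomes the order of a one--variable (cases (1)--(3)) or two--variable (case (4)) meromorphic function, which I read off from the local expansions of Remark \ref{rmk:holo} together with the addition law of Lemma \ref{prop:2theta2}. The function $F_a$ is immediate: from the local parameters $t_a^3=x-b_a$ at $B_a$ and $t_\infty^3=1/x$ at $\infty$ in Remark \ref{rmk:holo}, the function $x-b_a$ has divisor $3B_a-3\infty$ on $X$. Since $F_a=\prod_{i=1}^3(b_a-x_i)$ is a product over the three points, this single fact yields all four values for $F_a$ at once: order $3$ when one point sits at $B_a$, order $0$ at a generic triple, order $-3$ when one point runs to $\infty$, and order $3+3=6$ when two points collide at $B_a$.

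For $A_a$ I first compute the divisor of $P_3\mapsto A_a(P_1,P_2,P_3)$ for $P_1,P_2$ generic and fixed. Comparing the determinants of (\ref{eq:defAF}) with Definition \ref{def:mul} shows that the numerator of $A_a$ equals $\mu_3(P_3;P_1,P_2,B_a)$ up to a (generically) nonzero constant, and the denominator equals $\mu_2(P_3;P_1,P_2)$ up to the nonzero constant $x_2-x_1$. By Lemma \ref{prop:2theta2} the divisor of $\mu_3(\cdot;P_1,P_2,B_a)$ in $P_3$ is $P_1+P_2+B_a+[-1]_3(P_1,P_2,B_a)-6\infty$ and that of $\mu_2(\cdot;P_1,P_2)$ is $P_1+P_2+[-1]_2(P_1,P_2)-4\infty$; subtracting, the divisor of $A_a$ in $P_3$ is $B_a+[-1]_3(P_1,P_2,B_a)-[-1]_2(P_1,P_2)-2\infty$. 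The term $B_a$ gives the order $1$ in (1) and the term $-2\infty$ gives the order $-2$ in (3).

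Part (2) I prove by the involutivity of the $[-1]$ map. By the defining property of $\mu_3$, the value $A_a(Q_1,Q_2,Q_3)=\mu_3(B_a;Q_1,Q_2,Q_3)$ vanishes exactly when $B_a$ lies among the six zeros $\{Q_1,Q_2,Q_3\}\cup[-1]_3(Q_1,Q_2,Q_3)$ of $\mu_3(\cdot;Q_1,Q_2,Q_3)$. Taking $(Q_1,Q_2,Q_3)=[-1]_3([-1]_2(P_1,P_2),B_a)$ and using that $[-1]_3$ realizes $u\mapsto-u$ on $\WW_3$ (Proposition \ref{prop:addition}), hence is an involution on the generic stratum, I get $[-1]_3(Q_1,Q_2,Q_3)=([-1]_2(P_1,P_2),B_a)$, which contains $B_a$; thus $A_a$ vanishes at this triple, and since $B_a$ occurs once among the six (generically distinct) zeros the zero is simple, giving order $1$. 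As this triple is a generic point of $S^3X$, $F_a$ is there finite and nonzero, giving order $0$.

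The delicate case is (4), where two points collide at $B_a$ with $P_1$ generic. Here I substitute the expansions $x_2=b_a+t^3$, $y_2=C_at(1+d_{\ge}(t^2))$, $x_3=b_a+s^3$, $y_3=C_as(1+d_{\ge}(s^2))$ of Remark \ref{rmk:holo} into the two determinants. Row--reducing the $4\times4$ numerator against its $B_a$--row and expanding along the $P_1$--row, the two order--$4$ cofactor contributions combine with coefficient $2b_a(x_1-b_a)-(x_1^2-b_a^2)=-(x_1-b_a)^2$ (generically nonzero) while the third cofactor is of order $9$, so the numerator is $\sim C_a(x_1-b_a)^2\,ts(s-t)(s+t)$, of order $4$; the $3\times3$ denominator is $\sim C_a(b_a-x_1)(s-t)$, of order $1$. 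Dividing gives $A_a\sim -(x_1-b_a)\,ts(s+t)$, of order $3$, as claimed. The main obstacle is precisely this two--variable expansion: the naive leading terms of the numerator cancel, so one must carry the expansion out to order $4$ and verify that the surviving coefficient does not vanish for generic $P_1$; the involutivity bookkeeping used in (2) is the only other step requiring care.
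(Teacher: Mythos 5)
Your proposal is correct, and for most of the statement it takes a genuinely different route from the paper. For $F_a$ you use the single global fact $\mathrm{div}(x-b_a)=3B_a-3\infty$ (read off from Remark \ref{rmk:holo}) and multiplicativity over the three points, which disposes of all four cases at once; the paper instead treats each case by explicit local expansion (e.g.\ (\ref{eq:AF3}) for case (3) and a two-point expansion for case (4)), and for case (2) argues via (\ref{eq:4.2}) that none of the $Q'_i$ can equal $B_a$ --- your version of that last point (``$B_a$ would occur twice among the six generically distinct zeros of $\mu_3(\cdot;Q')$'') is an acceptable substitute. For $A_a$ in cases (1) and (3) your identification of the divisor of $P_3\mapsto A_a(P_1,P_2,P_3)$ as $B_a+[-1]_3(P_1,P_2,B_a)-[-1]_2(P_1,P_2)-2\infty$, obtained from the listed properties of $\mu_n$ and Lemma \ref{prop:2theta2}, is cleaner and more informative than the paper's ``follows from the definition'' and the expansion (\ref{eq:AF3}). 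In case (2) the paper's proof is a determinant manipulation resting on Lemma \ref{lm:-1three}; yours replaces it by the involutivity of $[-1]_3$ on the generic stratum, which is legitimate by Proposition \ref{prop:addition} together with generic injectivity of the Abel map on $S^3X$. In case (4) you and the paper do essentially the same explicit expansion; the only difference is that you use two independent parameters $s,t$ on the same sheet while the paper uses one parameter on the two conjugate sheets $\zeta_3 C_a t_a$, $\zeta_3^2 C_a t_a$, and both give numerator of order $4$ with the same surviving coefficient $-(x_1-b_a)^2$, hence order $3$ for $A_a$ and $6$ for $F_a$.

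The one step that deserves a little more care is the ``order exactly $1$'' claim in case (2). What you verify is that $P\mapsto\mu_3(P;Q')$ has a \emph{simple zero at $P=B_a$} when $Q'$ sits exactly at $[-1]_3([-1]_2(P_1,P_2),B_a)$; what is asserted is the order of vanishing of $Q\mapsto A_a(Q)=\mu_3(B_a;Q)$ as $Q$ moves to that point, say along the path $Q'(t)=[-1]_3(Q_1,Q_2,P_3(t))$ with $P_3(t)\to B_a$ in the parameter $t$, $t^3=x_3-b_a$. These agree here, but only because involutivity locates the nearby zero of $\mu_3(\cdot;Q'(t))$ \emph{exactly} at $P_3(t)$, so that $\mu_3(B_a;Q'(t))=(0-t)\,g(0,t)$ with $g(0,0)\neq 0$ by the simplicity you established at $t=0$; one sentence to this effect closes the gap. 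As written, ``$B_a$ occurs once among the six zeros, hence the zero is simple'' conflates the multiplicity in the evaluation variable with the order in the deformation parameter.
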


\begin{proof}

(1) follows from the definition.
(3) is obvious because for $P_3$ near $\infty$, we have
\begin{equation}
\begin{split}
	A_a(P_1, P_2, P_3)
    &= \frac{ \left|\begin{matrix}
               1 & x_2 & y_2\\
               1 & x_3 & y_3\\
               1 & b_2 & 0 \\
               \end{matrix}\right| x_1^2}{
               \left|\begin{matrix}
               1 & x_2 \\
               1 & x_3 \\
      \end{matrix}\right| y_1} (1 + d_{\ge}(t_\infty^2))
    = \frac{ \left|\begin{matrix}
               1 & x_2 & y_2\\
               1 & x_3 & y_3\\
               1 & b_2 & 0 \\
               \end{matrix}\right| }{
               \left|\begin{matrix}
               1 & x_2 \\
               1 & x_3 \\
      \end{matrix}\right|} \frac{1}{t_\infty^2}(1 + d_{\ge}(t_\infty^2)), \\
	F_a(P_1, P_2, P_3)
    &= \frac{1}{t_\infty^3} + d_{\ge}(t_\infty^2). \quad
\label{eq:AF3}
\end{split}
\end{equation}

To prove (2), we denote by
  $C_a:=\left.\dfrac{d y}{d t_a}\right|_{x = b_a}$,
we assume that $P_1$ and $P_2$ are generic points
and $P_3$ is close to $B_a$;
$P_3 = (x_3, y_3)$ behaves like 
$(b_a + t_a^3 + d_{\ge}(t_a^4), \zeta_3^i C_a t_a + d_{\ge}(t_a^2) )$.
Let
$$
(Q_1, Q_2) = [-1]_2(P_1, P_2), \quad
(Q_1',Q_2',Q_3') =[-1]_3 (Q_1, Q_2, P_3),
$$
where $Q_a = (x'_a, y'_a)$ and $Q'_a = (x''_a, y''_a)$, {\it i.e.},
\begin{gather*}
\left| \begin{matrix}
1 & x_1 & {y_1} \\
1 & x_2 & {y_2} \\
1 & x_c' & {y_c'} \\
\end{matrix} \right| = 0,
\quad
\left| \begin{matrix}
1 & x_1' & {y_1'} & {x_1'}^2 \\
1 & x_2' & {y_2'} & {x_2'}^2 \\
1 & x_3 & {y_3} & {x_3}^2 \\
1 & x_c'' & {y_c''} & {x_c''}^2 \\
\end{matrix} \right| = 0.
\end{gather*}

We  consider the expansion of $A_a$ and $F_a$.
First we look at $F_a$. When $P_3$ is equal to $B_a$,
$F_a$ becomes
\begin{gather}
\left| \begin{matrix}
1 & x_1'' & {x_1''}^2 & {x_1''}^3 \\
1 & x_2'' & {x_2''}^2 & {x_2''}^3 \\
1 & x_3'' & {x_3''}^2 & {x_3''}^3 \\
1 & b_a & b_a^2   & b_a^3 \\
\end{matrix} \right| \Bigr/
\left| \begin{matrix}
1 & x_1'' & {x_1''}^2  \\
1 & x_2'' & {x_2''}^2  \\
1 & x_3'' & {x_3''}^2  \\
\end{matrix} \right|,
\end{gather}
and if it vanished then
 one of the $Q'$'s would equal $(b_a + t^3, \zeta_3 C_a t)$ near $t=0$
but  $(b_a + t^3, \zeta_3 C_a t)$ does not satisfy (\ref{eq:4.2}).

We now compute $A_a$ as follows:
\begin{gather*}
\begin{split}
A_a( Q'_1, Q'_2, Q'_3)
&= \displaystyle{
\frac{
\left| \begin{matrix}
1 & x_1'' & y_1'' & {x_1''}^2 \\
1 & x_2'' & y_2'' & {x_2''}^2 \\
1 & x_3'' & y_3'' & {x_3''}^2 \\
1 & b_a & 0   & b_a^2 \\
\end{matrix} \right|}
{
\left| \begin{matrix}
1 & x_1'' & y_1''  \\
1 & x_2'' & y_2''  \\
1 & x_3'' & y_3''  \\
\end{matrix} \right|
}}.\\
\end{split}
\end{gather*}
Direct computation gives the numerator as:
\begin{gather}
\left| \begin{matrix}
 x_1'' & y_1'' & {x_1''}^2 \\
 x_2'' & y_2'' & {x_2''}^2 \\
 x_3'' & y_3'' & {x_3''}^2 \\
\end{matrix} \right| -
\left| \begin{matrix}
1 &  y_1'' & {x_1''}^2 \\
1 &  y_2'' & {x_2''}^2 \\
1 &  y_3'' & {x_3''}^2 \\
\end{matrix} \right| b_a -
\left| \begin{matrix}
1 & x_1'' & y_1''  \\
1 & x_2'' & y_2''  \\
1 & x_3'' & y_3''  \\
\end{matrix} \right| b_a^2.
\end{gather}
Due to the relations in Lemma \ref{lm:-1three}, this equals
\begin{gather}
\frac{
\left| \begin{matrix}
1 &  x_1'' & {x_1''}^2 \\
1 &  x_2'' & {x_2''}^2 \\
1 &  x_3'' & {x_3''}^2 \\
\end{matrix} \right| }{
\left| \begin{matrix}
1 &  x_1' & {x_1'}^2 \\
1 &  x_2' & {x_2'}^2 \\
1 &  b_a & {b_a}^2 \\
\end{matrix} \right| }
\times 
\left| \begin{matrix}
1 & x_1' & y_1' & {x_1'}^2 \\
1 & x_2' & y_2' & {x_2'}^2 \\
1 & x_3' & y_3' & {x_3'}^2 \\
1 & b_a & 0   & b_a^2 \\
\end{matrix} \right|
=
\frac{
\left| \begin{matrix}
1 &  x_1'' & {x_1''}^2 \\
1 &  x_2'' & {x_2''}^2 \\
1 &  x_3'' & {x_3''}^2 \\
\end{matrix} \right| }{
\left| \begin{matrix}
1 &  x_1' & {x_1'}^2 \\
1 &  x_2' & {x_2'}^2 \\
1 &  b_a & {b_a}^2 \\
\end{matrix} \right| }
\times 
\left(
\left| \begin{matrix}
1 & x_1' & y_1' & {x_1'}^2 \\
1 & x_2' & y_2' & {x_2'}^2 \\
1 & b_a & C_a t_a & b_a^2 \\
1 & b_a & 0   & b_a^2 \\
\end{matrix} \right| + d_{\ge}(t_a^2) \right).
\end{gather}
We  consider the factors in this formula. 
The fact that $Q_1$ and $Q_2$ are generic for generic
$P_1$ and $P_2$, and (\ref{eq:III})
imply that both
$$
\left| \begin{matrix}
1 &  x_1'' & {x_1''}^2 \\
1 &  x_2'' & {x_2''}^2 \\
1 &  x_3'' & {x_3''}^2 \\
\end{matrix} \right|  \quad\mbox{and}\quad
\left| \begin{matrix}
1 &  x_1' & {x_1'}^2 \\
1 &  x_2' & {x_2'}^2 \\
1 &  b_a & {b_a}^2 \\
\end{matrix} \right| 
$$
do not vanish in the limit $P_3 \to B_a$.
Hence  $A_a$ has a simple zero at $B_a$.

To prove (4), we consider $P_1$ and $P_2$ near $B_a$;
$P_i = (x_i, y_i)$ behaves like 
$(t_b^3 + b_a, \zeta_3^i C_a t_b + d_{\ge}(t_b^2) )$,
$i=1, 2$.
Let us consider the behavior of $A_a$ and $F_a$.
\begin{gather*}
\begin{split}
A_a( P_1, P_2, P_3)
&= \displaystyle{
\frac{
\left| \begin{matrix}
1 & b_a + t_a^3 + d_{\ge}(t_a^4)& \zeta_3 C_a t_a + d_{\ge}(t_a^3) 
& (b_a + t_a^3)^2+d_{\ge}(t_a^4) \\
1 & b_a + t_a^3 + d_{\ge}(t_a^4)& \zeta_3^2 C_a t_a + d_{\ge}(t_a^3) 
& (b_a + t_a^3)^2+d_{\ge}(t_a^4) \\
1 & x_3 & y_3 & x_3^2 \\
1 & b_a & 0   & b_a^2 \\
\end{matrix} \right|}
{
\left| \begin{matrix}
1 & b_a + t_a^3 + d_{\ge}(t_a^4)& \zeta_3 C_a t_a + d_{\ge}(t_a^2) \\
1 & b_a + t_a^3 + d_{\ge}(t_a^4)& \zeta_3^2 C_a t_a + d_{\ge}(t_a^2) \\
1 & x_3 & y_3  \\
\end{matrix} \right|
}}\\
\end{split}
\end{gather*}
\begin{gather*}
\begin{split}
&= \displaystyle{
\frac{
\left| \begin{matrix}
0 & 0   & (\zeta_3-\zeta_3^2)C_a + d_{\ge}(t_a^3) & 0   \\
1 & b_a + t_a^3 + d_{\ge}(t_a^4)& \zeta_3^2 C_a t_a + d_{\ge}(t_a^3) 
& (b_a + t_a^3)^2+d_{\ge}(t_a^4) \\
1 & x_3 & y_3 & x_3^2 \\
1 & b_a & 0   & b_a^2 \\
\end{matrix} \right|}
{
\left| \begin{matrix}
1 & b_a + t_a^3 + d_{\ge}(t_a^4)& \zeta_3 C_a t_a + d_{\ge}(t_a^2) \\
0 & 0                     & (\zeta_3^2-\zeta_3) C_a t_a + d_{\ge}(t_a^2) \\
1 & x_3 & y_3  \\
\end{matrix} \right|
}}.\\
\end{split}
\end{gather*}

Direct computation shows that this equals $t_a^3 (b_a - x_3)$,
and in turn $F_a(P_1, P_2, P_3)=t_a^6 (b_a - x_3)$.
\end{proof}

We note that 
 (1) and (3) in Proposition \ref{prop:Azero}
give the multiplicity of zeros and poles of
$F_a$ when
 viewed as a function over $X \times S^2 X$.
In particular, $F_a$ does not vanish at 
$[-1]_3([-1]_2(P_1, P_2), B_a)$.

\bigskip

\section{The sigma function}
We  introduce the $\sigma$ function corresponding to $X$,
 an entire function over $\kappa^{-1}(\JJ)$,
following \cite[Section 3]{EEMOP1, EEMOP2}.
We recall the definition of $\sigma$ and its properties without
proofs.

We introduce the period matrices by
\begin{equation}
   \left[\,\eta'  \ \eta''  \right]=\frac{1}{2} 
\left[\int_{\alpha_i}\nuII_j \ \ \int_{\beta_i}\nuII_j
\right]_{i,j=1,2,3},
   \label{eq2.5}
\end{equation} 
where $\nuII_j$'s are the differentials of the
second kind \cite[(1.21) and (1.22)]{EEMOP2},
\begin{equation*}
 \nuII_1(x,y)=\frac{x^2}{3 y^2}dx,\quad
 \nuII_1(x,y)=\frac{2xy}{3 y^2}dx,\quad
 \nuII_1(x,y)=\frac{(5 x^2 + 3 \lambda_3 x + \lambda_2)y}{3 y^2}dx.
\end{equation*}

\begin{proposition}
The matrix,
\begin{equation}
   M := \left[\begin{array}{cc}2\omega' & 2\omega'' \\ 
                              2\eta' & 2\eta'' \end{array}\right],
\end{equation} 
 satisfies 
\begin{equation}
   M\left[\begin{array}{cc} & -1 \\ 1 & \end{array}\right]{}^t {M}
   =2\pi\sqrt{-1}\left[\begin{array}{cc} & -1 \\ 1 &
     \end{array}\right].
   \label{eq2.7}
\end{equation} 
\end{proposition}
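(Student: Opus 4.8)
The identity is the generalized Legendre relation between the first- and second-kind period matrices of $X$, and the plan is to derive it from Riemann's bilinear relations. The first step is purely formal. Writing the symplectic form as the $6\times6$ matrix $J=\begin{pmatrix}0&-I_3\\ I_3&0\end{pmatrix}$ and multiplying out $MJ\,\tt M$ in $3\times3$ blocks, the assertion $MJ\,\tt M=2\pi\ii\,J$ is equivalent to the four block relations
\[
\omega''\tt\omega'=\omega'\tt\omega'',\qquad
\eta''\tt\eta'=\eta'\tt\eta'',\qquad
2\bigl(\omega''\tt\eta'-\omega'\tt\eta''\bigr)=-\pi\ii\,I_3,
\]
the remaining block being the transpose of the third. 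These are exactly the bilinear relations for pairs of closed meromorphic differentials, of types (first,first), (second,second) and (first,second) respectively, so the whole theorem reduces to establishing these three.

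The tool for all three is the cut-surface argument. First I would cut $X$ along the canonical basis of Figure \ref{fig:1} to obtain a simply connected $12$-gon $\Delta$ whose oriented boundary runs through $\alpha_k\beta_k\alpha_k^{-1}\beta_k^{-1}$ $(k=1,2,3)$; on $\Delta$ the first-kind integral $u_i(P):=\int^{P}\nuI_i$ is single valued. Applying Stokes' theorem to $u_i\,\nuII_j$, whose exterior derivative is $\nuI_i\wedge\nuII_j$, and then the residue theorem at the only pole $\infty$, the boundary term collapses in the standard way to the symplectic combination of periods and yields the master formula
\[
\sum_{k=1}^{3}\left(\oint_{\alpha_k}\nuI_i\oint_{\beta_k}\nuII_j-\oint_{\beta_k}\nuI_i\oint_{\alpha_k}\nuII_j\right)=2\pi\ii\,\res_{\infty}\!\left(u_i\,\nuII_j\right),
\]
together with its two analogues obtained by replacing $(\nuI,\nuII)$ by $(\nuI,\nuI)$ and by $(\nuII,\nuII)$. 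In block form the left-hand side equals $4(\omega'\tt\eta''-\omega''\tt\eta')_{ij}$, and the corresponding left-hand sides for the like-kind pairs equal $4(\omega'\tt\omega''-\omega''\tt\omega')_{ij}$ and $4(\eta'\tt\eta''-\eta''\tt\eta')_{ij}$.

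With the master formula in hand the three blocks separate cleanly. For $(\nuI,\nuI)$ the product $u_i\,\nuI_j$ is holomorphic at $\infty$ (equivalently $\nuI_i\wedge\nuI_j\equiv0$ as a wedge of $(1,0)$-forms), so the residue is $0$; since the block combination is automatically antisymmetric, its vanishing forces $\omega'\tt\omega''$ to be symmetric, which is the first relation. For $(\nuII,\nuII)$ the symmetrized residue $\res_\infty\!\bigl((\int\nuII_i)\nuII_j\bigr)+\res_\infty\!\bigl((\int\nuII_j)\nuII_i\bigr)$ is the residue of an exact form and hence vanishes, but the antisymmetric part need not; the genuine input here is that the second-kind differentials of \cite[Section 3]{EEMOP2} are normalized precisely so that these pairwise residues vanish, which gives the symmetry $\eta'\tt\eta''=\eta''\tt\eta'$. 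The cross relation is the crux: here $\res_\infty(u_i\,\nuII_j)=\delta_{ij}$ by the defining duality of the associated second-kind differentials, so the master formula reads $4(\omega'\tt\eta''-\omega''\tt\eta')_{ij}=2\pi\ii\,\delta_{ij}$, i.e. $\omega'\tt\eta''-\omega''\tt\eta'=\tfrac{\pi\ii}{2}I_3$, which is the third relation.

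I expect the residue evaluation $\res_\infty(u_i\,\nuII_j)=\delta_{ij}$ to be the main obstacle, and the place where the factors of $2$ and the sign of $J$ must be pinned down. It requires the Laurent expansions at the totally ramified point $\infty$ (local parameter $t_\infty$ with $t_\infty^3=1/x$) of both families: those of $\nuI_i$ are recorded in Remark \ref{rmk:holo}, and one needs the matching expansions of the $\nuII_j$ from \cite{EEMOP2} to read off the coefficient of $dt_\infty/t_\infty$ in $u_i\,\nuII_j$. Only after this residue is confirmed to be exactly $\delta_{ij}$, and the half-period normalizations (each period being twice an entry of $\omega',\eta'$) are carried through, does the right-hand side land on $2\pi\ii\,J$ rather than on a nonzero scalar multiple of it; everything else is the routine cut-surface bookkeeping.
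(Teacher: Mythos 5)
The paper itself offers no proof of this proposition: it is recalled without argument as the generalized Legendre relation, with citations to \cite{Ba1, BEL2}, so your cut-surface derivation is reconstructing what those references contain rather than paralleling anything in the text. Your formal reduction is correct: with the intersection convention $[\alpha_i,\beta_j]=\delta_{ij}$ of Section 2, the assertion $MJ\,{}^tM=2\pi\ii J$ is exactly the symmetry of $\omega'\,{}^t\omega''$ and of $\eta'\,{}^t\eta''$ together with $\omega'\,{}^t\eta''-\omega''\,{}^t\eta'=\tfrac{\pi\ii}{2}I_3$, and the master bilinear identity, the vanishing of the $(\nuI,\nuI)$ residue, and the exact-form argument for the $(\nuII,\nuII)$ block are all sound.

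The genuine gap is the step you yourself flag and then do not carry out: $\res_\infty(u_i\,\nuII_j)=\delta_{ij}$ is not a formality for this curve, and taken at face value from the text it is false. The three second-kind differentials displayed in the paper (all misprinted as $\nuII_1$) have poles at $\infty$ of orders $2,3,6$ in the order listed, while $\nuI_1,\nuI_2,\nuI_3$ vanish there to orders $4,1,0$ (note that Remark \ref{rmk:holo} misstates the order of $\nuI_2$ as $3$; from $x=t_\infty^{-3}$, $y^2=t_\infty^{-8}(1+d_{\ge}(t_\infty^3))$ one gets $\nuI_2=-t_\infty(1+\cdots)\,dt_\infty$, consistent with the gap sequence $1,2,5$ of the semigroup $\langle 3,4\rangle$ --- so relying on that remark as you propose would already derail the computation). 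Pairing the differentials in the displayed order gives a residue matrix that is \emph{anti-diagonal} at leading order, which would turn your master formula into $2\pi\ii$ times an anti-diagonal block and destroy the statement. The duality $\res_\infty(u_i\,\nuII_j)=\delta_{ij}$ holds only after each $\nuII_j$ is matched to the $\nuI_j$ whose vanishing order is two less than its pole order (so the differential with the $(5x^2+3\lambda_3x+\lambda_2)$ numerator is the one dual to $\nuI_1$), and the off-diagonal residues vanish only because the subleading coefficients are tuned for exactly this purpose, as with Baker's hyperelliptic $dr_j$. Until the Laurent expansions of the $\nuII_j$ at $\infty$ are written out and the full residue matrix --- not merely its leading term --- is verified to be the identity in the correct indexing, the factor $2\pi\ii J$ on the right-hand side is not established; everything else in your argument is routine and correct.
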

This provides a symplectic structure in the Jacobian
which is known as  {\it generalized Legendre relation}
\cite{Ba1, BEL2}.
It is known that ${\omega'}^{-1}\omega''$ is a
symmetric, positive-definite matrix. 

As shown by  Riemann \cite{F1}, 
 $\text{Im}\,({\omega'}^{-1}\omega'') $ is positive definite.
Noting Theorem 1.1 in \cite{F1}, let
\begin{equation}
   \delta:=\left[\begin{array}{cc}\delta''\ \\
       \delta'\end{array}\right]\in \left(\tfrac12\ZZ\right)^{6}
   \label{eq2.9} 
\end{equation} 
be the theta characteristic which gives the Riemann constant with
respect to the base point $\infty$ and the period matrix 
$[\,\omega'\ \omega'']$.

For $u \in \mathbb C^g$, we define
\begin{eqnarray}
   \sigma(u)&=&\sigma(u;M)=\sigma(u_1, u_2,u_3;M) 
   \label{def_sigma}\\
   &=&c\,\text{exp}(-\tfrac{1}{2}u\eta'{\omega'}^{-1}\ ^t\negthinspace u)
   \vartheta\negthinspace
   \left[\delta\right](\frac{1}{2}{\omega'}^{-1}\ ^t\negthinspace u;\ 
{\omega'}^{-1}\omega'') \nonumber \\
   &=&c\,\text{exp}(-\tfrac{1}{2}u\eta'{\omega'}^{-1}\ ^t\negthinspace u)
\nonumber  \\
   &\times&
   \sum_{n \in \ZZ^3} \exp \big[\pi \ii\big\{
    \ ^t\negthinspace (n+\delta''){\omega'}^{-1}\omega''(n+\delta'')
   + \ ^t\negthinspace (n+\delta'')({\omega'}^{-1}\,u+\delta')\big\}\big],
\nonumber  
\end{eqnarray}
where  $c$ is a certain constant.
In this article, the constant $c$ is chosen in such a way that
the local expansion of $\sigma$
is consistent with  Proposition \ref{prop:sigma}  (2).

For a given $u\in\CC^3$, we introduce the notation
$u'$ and $u''$ for the  $\RR^3$-vectors such that
\begin{equation*}
   u=2\omega'u'+2\omega''u''.
\end{equation*}

A `shifted theta divisor' $\Theta_{2}$ is the vanishing 
locus of $\sigma$;
\begin{equation}
	\Theta_{2} = \WW_{2} \cup [-1] \WW_{2} =\WW_{2}.
\label{eq:Theta:g-1}
\end{equation}

Here we summarize the  properties of $\sigma(u;M)$ as follows: 
\begin{proposition} 
\label{prop:sigma} 
For all $u\in\CC^3$, $\ell\in\Lambda$,
and $\gamma\in\mathrm{Sp}(6,{\ZZ})$, we have\,{\rm :} 
\begin{enumerate}
\item
\begin{equation*}
    \sigma(u;\gamma M)=\sigma(u;M).
\end{equation*}

\item
      $u\mapsto\sigma(u;M)$  has zeroes of
order $1$ along $\Theta_{2} = \WW_2$;
$$
\sigma(u;M)=0 \iff u\in\Theta_{2}.
$$
$\sigma$ has the following expansion near $\Theta_{2}$,
\begin{equation*}
	\sigma(u) = (u_1 - u_3 u_2^2 + \frac{1}{20} u_3^5) + 
        \mbox{higher-weight terms}.
\label{eq:sigmaext}
\end{equation*}

\item
For $u$, $v\in\CC^3$, and $\ell$
($=2\omega'\ell'+2\omega''\ell''$) $\in\Lambda$, we define
\begin{align*}
  L(u,v)    &:=2{}^t{u}(\eta'v'+\eta''v''),\nonumber \\
  \chi(\ell)&:=\exp[\pi\sqrt{-1}\big(2({}^t {\ell'}\delta''-{}^t
  {\ell''}\delta') +{}^t {\ell'}\ell''\big)] \ (\in \{1,\,-1\}).
\end{align*}
The following holds
\begin{equation*}
	\sigma(u + \ell) = \sigma(u)
               \exp(L(u + \frac{1}{2}\ell, \ell)) \chi(\ell).
        \label{eq:4.11}
\end{equation*}

\item
For the  action $\hzeta_3$ on $u$ 
defined in Section \ref{curves},
we have
\begin{equation*}
\sigma_{}(\hzeta_3 u)= \zeta_3 \sigma_{}(u),
\quad
\hzeta_3 \Theta_2= \Theta_2.
\label{eq:zeta}
\end{equation*}

\end{enumerate}
\end{proposition}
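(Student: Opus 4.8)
The plan is to derive all four items of Proposition~\ref{prop:sigma} from the definition (\ref{def_sigma}) of $\sigma$ as a Gaussian exponential factor times $\vartheta[\delta]$, reducing each to a classical property of the Riemann theta function together with the period relations already recorded. I would dispatch (3) and (1) first, as these are essentially formal. For (3), I start from the standard quasi-periodicity of $\vartheta[\delta](z;\tau)$ under $z\mapsto z+m+\tau n$; writing $\ell=2\omega'\ell'+2\omega''\ell''$ and substituting $z=\tfrac12{\omega'}^{-1}\,{}^t u$, a shift of $u$ by $\ell$ becomes a shift of $z$ by $\ell'+({\omega'}^{-1}\omega'')\ell''$. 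The theta quasi-period factor then combines with the change in the prefactor $\exp(-\tfrac12 u\eta'{\omega'}^{-1}\,{}^t u)$, and the generalized Legendre relation (\ref{eq2.7}) is exactly what collapses the resulting bilinear expression into $L(u+\tfrac12\ell,\ell)$, while the characteristic-dependent signs assemble into $\chi(\ell)$. For (1), I use the transformation law of $\vartheta$ with characteristics under $\gamma\in\Sp(6,\ZZ)$: the change $M\mapsto\gamma M$ acts on $({\omega'}^{-1}\omega'')$ by the associated fractional-linear transformation and moves $\delta$ in the way that keeps it the Riemann constant, so that the automorphy factor of $\vartheta$ is cancelled by the prefactor, leaving $\sigma$ unchanged.

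For (2), the zero locus is governed by the Riemann vanishing theorem: the characteristic $\delta$ in (\ref{eq2.9}) is the Riemann constant for base point $\infty$, so $\vartheta[\delta]$ vanishes exactly on the translate of $\WW_2$ which, after the normalization built into $\sigma$, is $\Theta_2=\WW_2$; since the scalar exponential factor never vanishes, $\sigma$ and $\vartheta[\delta]$ have the same divisor, with generic multiplicity one. The substantive point is the local expansion: the leading term is the weighted-homogeneous Schur--Weierstrass polynomial attached to the Weierstrass gap sequence $\{1,2,5\}$ at $\infty$, under the weights equal to the gaps, namely $\wt u_1=5$, $\wt u_2=2$, $\wt u_3=1$ (so that $u_1$, $u_3u_2^2$, and $u_3^5$ all have weight $5=2g-1$). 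I would establish this by expanding $\vartheta[\delta]$ along the tangent directions to $\WW_2$ at the origin and matching orders, fixing the free constant $c$ so that the coefficient of $u_1$ is $1$. This is where the real work lies, and I regard it as the main obstacle, since it requires controlling the Taylor coefficients of the theta series up to weight $2g-1=5$ and is the one step that genuinely uses the specific geometry of the curve rather than general theta theory.

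Finally, (4) follows cleanly once (2) and (3) are in hand. The Galois action on forms gives $\hzeta_3 u=(\zeta_3 u_1,\zeta_3 u_2,\zeta_3^2 u_3)$, and $\hzeta_3$ preserves $\Lambda$ (Section~\ref{curves}); moreover $\hzeta_3\WW_2=\WW_2$ because $\hzeta_3$ acts on $S^2 X$ compatibly with $w$, whence $\hzeta_3\Theta_2=\Theta_2$. Consequently $u\mapsto\sigma(\hzeta_3 u)$ is an entire function with the same divisor as $\sigma$ and ---using that $\hzeta_3$ intertwines the automorphy factor of (3), i.e.\ $L(\hzeta_3 u,\hzeta_3\ell)=L(u,\ell)$ and $\chi(\hzeta_3\ell)=\chi(\ell)$--- the same transformation law, so $g(u):=\sigma(\hzeta_3 u)/\sigma(u)$ extends to a nowhere-vanishing $\Lambda$-periodic holomorphic function on the compact torus $\JJ$, hence a constant. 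To evaluate it I substitute the leading term from (2): under $\hzeta_3$ each of $u_1$, $u_3u_2^2$, and $u_3^5$ is multiplied by $\zeta_3$ (respectively $\zeta_3$, $\zeta_3^{4}=\zeta_3$, $\zeta_3^{10}=\zeta_3$), so the entire weight-$5$ polynomial scales by $\zeta_3$, giving $\sigma(\hzeta_3 u)=\zeta_3\,\sigma(u)$; the companion statement $\hzeta_3\Theta_2=\Theta_2$ is the invariance of the divisor just noted. The only delicate verification in this last part is the intertwining of the automorphy factor, which reduces to checking that $\hzeta_3$ is compatible with the polarization encoded in $\eta',\eta''$ and $\delta$; everything else is the leading-order bookkeeping described above.
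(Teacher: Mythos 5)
The paper does not actually prove this proposition: its ``proof'' is the single line ``See \cite{EEMOP2}.'' So your proposal cannot be compared to an in-paper argument; what it does is reconstruct, in outline, the standard derivation that the cited reference carries out. Judged on those terms it is essentially the right route, and your bookkeeping in (4) is correct (with $\hzeta_3 u=(\zeta_3u_1,\zeta_3u_2,\zeta_3^2u_3)$ and weights $5,2,1$, each of $u_1$, $u_3u_2^2$, $u_3^5$ does scale by $\zeta_3$, so the constant is $\zeta_3$ once the quotient $\sigma(\hzeta_3 u)/\sigma(u)$ is known to be constant). Two places deserve more care than you give them. First, in (1) the automorphy factor of $\vartheta[\delta]$ under $\gamma\in\Sp(6,\ZZ)$ contains, besides the exponential term killed by the Legendre relation \eqref{eq2.7}, a factor $\kappa(\gamma)\det(c\tau+d)^{1/2}$ that the Gaussian prefactor cannot cancel; it is absorbed only because the constant $c=c(M)$ is pinned down by the normalization of the leading term in (2). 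So (1) logically depends on (2), and presenting (1) first as a formal cancellation inverts the actual order of the argument. Second, in (4) the identities $L(\hzeta_3u,\hzeta_3\ell)=L(u,\ell)$ and $\chi(\hzeta_3\ell)=\chi(\ell)$ are not automatic: $\hzeta_3$ acts on $H_1(X,\ZZ)$ by some $\gamma_0\in\Sp(6,\ZZ)$ that changes the integer coordinates $(\ell',\ell'')$, and one must check that $\gamma_0$ fixes the characteristic $\delta$ modulo $\ZZ^6$ (which holds because $\infty$ is a fixed point of the automorphism, so the Riemann constant is preserved); a cleaner way to package this is to combine (1) for $\gamma_0$ with the linear change of variable, rather than to verify the intertwining by hand.

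The one genuine gap is the expansion in (2). You correctly identify the leading term as the Schur--Weierstrass polynomial for the gap sequence $\{1,2,5\}$, but ``expanding $\vartheta[\delta]$ along the tangent directions to $\WW_2$ and matching orders'' is not a proof, and a direct Taylor expansion of the theta series to weight $2g-1=5$ with moduli-independent coefficients is exactly the hard step that the literature does \emph{not} do by brute force: \cite{BLE} and \cite{EEMOP2} obtain the Schur-function leading term by degenerating to the rational $(3,4)$-curve (equivalently, from the $\tau$-function/heat-equation characterization of $\sigma$), where weighted homogeneity forces the answer. Without that idea, or an equivalent one, your plan for (2) does not close; since (1) and the normalization of $c$ hang on it, this is the step you would need to supply or explicitly import from \cite{BLE, EEMOP2}, exactly as the paper itself does.
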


\begin{proof}
See \cite{EEMOP2}.
\end{proof}

\begin{proposition}
\label{prop:pperiod} 
For $\ell\in \Lambda$ as in Proposition \ref{prop:sigma}, 
\begin{equation}
	\sigma(u + \hzeta_3 \ell) = \sigma(u)
               \exp(L(\hzeta_3^2 u + \frac{1}{2}\ell, \ell)) \chi(\ell).
        \label{eq:4.11zeta}
\end{equation}
\end{proposition}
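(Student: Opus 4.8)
The plan is to deduce this $\hzeta_3$-twisted quasi-periodicity directly from the ordinary quasi-periodicity of Proposition \ref{prop:sigma}(3) by conjugating it with the $\hzeta_3$-equivariance of Proposition \ref{prop:sigma}(4). The guiding observation is that, because $\hzeta_3$ acts linearly on $\CC^3$ with $\hzeta_3^3 = \mathrm{id}$ and preserves $\Lambda$, a translation by $\hzeta_3\ell$ can be turned into an ordinary translation by $\ell$ after rotating the whole argument by $\hzeta_3^2$. Concretely, $\hzeta_3^2(u + \hzeta_3\ell) = \hzeta_3^2 u + \hzeta_3^3\ell = \hzeta_3^2 u + \ell$, so the exotic shift $\hzeta_3\ell$ is converted into the genuine lattice shift $\ell$ to which Proposition \ref{prop:sigma}(3) applies.

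First I would record the iterated equivariance: from $\sigma(\hzeta_3 v) = \zeta_3\sigma(v)$ one gets $\sigma(\hzeta_3^2 v) = \zeta_3^2\sigma(v)$, equivalently $\sigma(v) = \zeta_3\,\sigma(\hzeta_3^2 v)$, since $\zeta_3^{-2} = \zeta_3$. Applying this with $v = u + \hzeta_3\ell$ and using the rotation identity above yields $\sigma(u + \hzeta_3\ell) = \zeta_3\,\sigma(\hzeta_3^2 u + \ell)$. Next I would apply Proposition \ref{prop:sigma}(3) with argument $\hzeta_3^2 u$ and lattice vector $\ell$, giving $\sigma(\hzeta_3^2 u + \ell) = \sigma(\hzeta_3^2 u)\exp(L(\hzeta_3^2 u + \tfrac12\ell,\ell))\chi(\ell)$, and finally substitute $\sigma(\hzeta_3^2 u) = \zeta_3^2\sigma(u)$. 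Multiplying the prefactors gives $\zeta_3\cdot\zeta_3^2 = \zeta_3^3 = 1$, so the cube roots cancel and one is left with exactly $\sigma(u)\exp(L(\hzeta_3^2 u + \tfrac12\ell,\ell))\chi(\ell)$, which is (\ref{eq:4.11zeta}).

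The computation is short, so the only thing to watch is the bookkeeping of the $\zeta_3$-factors: the single factor from inverting the $\hzeta_3^2$-equivariance on the left, and the factor $\zeta_3^2$ from $\sigma(\hzeta_3^2 u)$, must combine to $1$, which is the reason no root of unity survives in the final formula. I would also make explicit that $\chi$ and $L$ are evaluated at the true lattice vector $\ell$ (not $\hzeta_3\ell$), which is legitimate precisely because $\hzeta_3\Lambda = \Lambda$ guarantees that both $\ell$ and $\hzeta_3\ell$ lie in $\Lambda$; in particular no independent evaluation of $\chi(\hzeta_3\ell)$ or of $L$ at $\hzeta_3\ell$ is required. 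There is no serious obstacle beyond this phase tracking, since both inputs (the standard transformation law and the $\hzeta_3$-equivariance) are quoted from Proposition \ref{prop:sigma}.
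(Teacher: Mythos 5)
Your argument is correct and is essentially the paper's own proof: both rest on combining the quasi-periodicity of Proposition \ref{prop:sigma}(3) with the equivariance $\sigma(\hzeta_3 v)=\zeta_3\sigma(v)$ via the substitution $v=\hzeta_3^2 u$, with the $\zeta_3\cdot\zeta_3^2=1$ cancellation handled identically. The only difference is presentational — you rotate the left-hand side by $\hzeta_3^2$ first, while the paper applies equivariance to $\sigma(v+\ell)$ and then substitutes — so nothing further is needed.
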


\begin{proof}
By considering $\sigma(\hzeta_3 v + \hzeta_3 \ell) = 
 \zeta_3\sigma(v + \ell)$ and letting $v = \hzeta_3^2 u$,
 we have 
$$
\sigma(\hzeta_3 v + \hzeta_3 \ell) = \zeta_3 \sigma(v)
               \exp(L(v + \frac{1}{2}\ell, \ell)) \chi(\ell).
$$
\end{proof}

\section{Addition law. II}
We recall the following results from the Appendix of \cite{EEMOP1}.


Following \cite{O}, we introduce
the partial derivative over a multi-index $\natural^n$,
$$
	\sigma_{\natural^n}(u)= 
\left\{\begin{matrix} 
\frac{\partial^2}{\partial u_3^2} \sigma(u)=\sigma_{33}(u)
 & \mbox{for } n = 1, \\
\frac{\partial}{\partial u_3} \sigma(u)=\sigma_{3}(u) 
& \mbox{for } n = 2, \\
 \sigma(u) & \mbox{for } n > 2. \\
\end{matrix}\right.
$$
Further we note that for $u\in \kappa^{-1}\WW_n$ 
and the action $\hzeta_3$ on $u$ equivariant
under the Abel map with the action
 $(x_i, y_i) \mapsto (x_i, \zeta_3 y_i)$, as in Proposition \ref{prop:sigma},
part (\ref{eq:zeta}),
 we have
\begin{equation}
\sigma_{}(\hzeta_3 u)= \zeta_3 \sigma_{}(u),
\quad
\sigma_{\natural^2}(\hzeta_3 u)=  \sigma_{\natural^2}(u),
\quad
\sigma_{\natural^1}(\hzeta_3 u)= \zeta_3^2 \sigma_{\natural^1}(u),
\label{eq:sign2}
\end{equation}
due to \cite[(A.2)]{EEMOP1}.

\begin{definition} 
For a positive integer $n>1$ and
a point $(x_1,y_1), \ldots, (x_n, y_n)$ in $X^n$,
we define
\begin{gather*}
\begin{split}
&\Delta_n((x_1, y_1), \ldots, (x_n, y_n))\\
& := 
\left|
\begin{matrix}
1 & \phi_1(x_{1}, y_{1}) & \phi_2(x_{1}, y_{1}) &
\cdots & \phi_{n-1}(x_{1}, y_{1})  \\
1 & \phi_1(x_{2}, y_{2}) & \phi_2(x_{2}, y_{2}) &
\cdots & \phi_{n-1}(x_{2}, y_{2})  \\
\vdots & \vdots & \vdots& \ddots & \vdots  \\ 
1 & \phi_1(x_{n}, y_{n}) & \phi_2(x_{n}, y_{n}) &
\cdots & \phi_{n-1}(x_{n}, y_{n})  \\
\end{matrix}
\right|
\left|
\begin{matrix}
1 & x_{1} & x_{1}^2 & \cdots & x_{1}^{n-1}  \\
1 & x_{2} & x_{2}^2 & \cdots & x_{2}^{n-1}  \\
\vdots & \vdots &\vdots&  \ddots & \vdots \\ 
1 & x_{n} & x_{n}^2 & \cdots & x_{n}^{n-1}  \\
\end{matrix}
\right|. \\
\end{split}
\end{gather*}
\end{definition} 
Then we have
\begin{gather*}
\Delta_4((x_1, y_1), (x_2, y_2), (x_3, y_3), (x, y))
 = 
\left|
\begin{matrix}
1 & x_{1} & y_{1} & x_{1}^2 \\
1 & x_{2} & y_{2} & x_{2}^2 \\
1 & x_{3} & y_{3} & x_{3}^2 \\
1 & x_{4} & y_{4} & x_{4}^2 \\
\end{matrix}
\right|
\cdot\left|
\begin{matrix}
1 & x_{1} & x_{1}^2 & x_{1}^{3}  \\
1 & x_{2} & x_{2}^2 & x_{2}^{3}  \\
1 & x_{3} & x_{3}^2 & x_{3}^{3}  \\
1 & x_{ } & x_{ }^2 & x_{ }^{3}  \\
\end{matrix}
\right|,
\end{gather*}
\begin{gather*}
\Delta_3((x_1, y_1), (x_2, y_2), (x_3, y_3))
 = 
\left|
\begin{matrix}
1 & x_{1} & y_{1} \\
1 & x_{2} & y_{2} \\
1 & x_{3} & y_{3} \\
\end{matrix}
\right|
\cdot\left|
\begin{matrix}
1 & x_{1} & x_{1}^2 \\
1 & x_{2} & x_{2}^2 \\
1 & x_{3} & x_{3}^2 \\
\end{matrix}
\right|, 
\end{gather*}
\begin{gather*}
\Delta_2((x_1, y_1),(x_2, y_2))
 = 
\left|
\begin{matrix}
1 & x_{1} \\
1 & x_{2}  \\
\end{matrix}
\right|^2 , \quad
\Delta_1((x_1, y_1)) = 1.
\end{gather*}

\begin{theorem}\label{th:addition}\cite[Theorem A.1]{EEMOP2}
Assume that $(m, n)$ is a pair of positive integers ($n, m > 1$).
Let $((x_i,y_i)_{i=1, \ldots, n},$ 
 $(x'_i,y'_i)_{i=1, \ldots, m})$
a point in $\Sym^n(X)\times \Sym^m(X)$
and its image under the Abel map be 
$(u, v) \in $ $\kappa^{-1} \WW_n\times \kappa^{-1} \WW_m$.
Then the following relation holds
\begin{gather}
\begin{split}
&\frac{\sigma_{\natural^{n+m}}(u + v) \sigma_{\natural^{n+m}}(u +\hzeta_3 v) 
 \sigma_{\natural^{n+m}}(u +\hzeta_3^2 v) }
{\sigma_{\natural^m}(u)^3 \sigma_{\natural^n}(v)^3} \\
&=
\frac{\prod_{i=0}^2 \Delta_{m+n}
((x_1,y_1),\ldots, (x_m, y_m), 
(x_1',\zeta_3^i y_1'),\ldots,(x_n',\zeta_3^i y_n'))}
{(\Delta_{n}((x_1',y_1')\ldots, (x_n', y_n'))
\Delta_{m}((x_1,y_1)\ldots, (x_m, y_m)))^3}\\
&\times
\prod_{i=1}^m\prod_{j=1}^n 
\frac{1}{ \Delta_2((x_i, y_i), (x_j', y'_j))^2}.
\label{eq:Th3}
\end{split}
\end{gather}
\end{theorem}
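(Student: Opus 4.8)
The plan is to prove (\ref{eq:Th3}) by the classical method of comparing the divisors and automorphy factors of the two sides, regarded as meromorphic functions of the points, and then fixing the remaining multiplicative constant by a boundary computation at $\infty$. I would first treat the top-dimensional case $n=m=3$, where every $\sigma_{\natural^k}$ reduces to $\sigma$ itself and the points may be assumed generic, and afterwards recover the statement on the lower strata ($n$ or $m$ equal to $2$) by a confluent limit in which points are allowed to collide.

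For the top case, fix the first group of points, hence $u$, and regard both sides as functions of the second divisor through $v$. The first observation is that the numerator $\prod_{i=0}^{2}\sigma(u+\hzeta_3^i v)$ is the natural norm of $\sigma(u+v)$ under the $\ZZ_3$-action generated by $\hzeta_3$, and that, because $\Lambda$ is $\hzeta_3$-invariant, the quasi-periodicity of Proposition~\ref{prop:sigma}(3) together with its twisted form in Proposition~\ref{prop:pperiod} makes this product $\Lambda$-periodic in $v$ precisely up to the automorphy factor carried by $\sigma(v)^3$ in the denominator. The arithmetic here is the cancellation of the three factors $\exp(L(\cdot,\hzeta_3^i\ell))\chi(\hzeta_3^i\ell)$ against $[\exp(L(v+\tfrac12\ell,\ell))\chi(\ell)]^3$, which follows from the generalized Legendre relation. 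Consequently the whole left-hand side descends to a well-defined meromorphic function on the Jacobian in the variable $v$; the right-hand side is manifestly an algebraic function of the coordinates, and after checking $\hzeta_3$-invariance of both (using $\sigma(\hzeta_3 u)=\zeta_3\sigma(u)$ from Proposition~\ref{prop:sigma}(4) and the transformation of the $\Delta$'s under $(x_i',y_i')\mapsto(x_i',\zeta_3 y_i')$) the two sides become comparable meromorphic functions on the same space.

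Next I would compare divisors by letting a single point $Q=(x_1',y_1')$ of the second group range over $X$ with everything else held fixed. On the left, $\sigma(u+\hzeta_3^i v)$ vanishes to first order, by Proposition~\ref{prop:sigma}(2), exactly when $u+\hzeta_3^i v\in\Theta_2=\WW_2$, while $\sigma(v)$ contributes the poles through the denominator. On the right, the zeros of $\Delta_{m+n}$ and the poles of the $\Delta_2$ factors in $\prod 1/\Delta_2^2$ sit on the same loci, since $\Delta_{m+n}$ vanishes exactly when the combined system of points fails to impose independent conditions on the span of $\phi_0,\dots,\phi_{m+n-1}$, which is the algebraic shadow of landing on $\WW_2$. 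Using the local expansions of $\nuI_1,\nuI_2,\nuI_3$ at $\infty$ and at the branchpoints from Remark~\ref{rmk:holo}, I would match the orders at each such point, the $\ZZ_3$-symmetry reducing the count to a single representative sheet. Once all zeros and poles coincide with multiplicity, the ratio of the two sides is a holomorphic nowhere-vanishing function on a compact domain, hence a constant independent of $Q$, and then of all the points.

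Finally I would determine that the constant equals $1$ by sending one point to $\infty$ and inserting the normalized leading expansion of $\sigma$ from Proposition~\ref{prop:sigma}(2) together with the expansions of the $\nuI_j$ at $\infty$; the Vandermonde factors on the right and the $\sigma_{\natural^k}$-normalizations on the left are arranged to cancel exactly. To pass to the strata I would let the relevant points collide so that $u\in\WW_2$ or $v\in\WW_2$; there $\sigma$ and $\sigma_3$ vanish and the leading behaviour is governed by $\sigma_{\natural^2}=\sigma_3$ or $\sigma_{\natural^1}=\sigma_{33}$, whose $\hzeta_3$-weights are recorded in (\ref{eq:sign2}), while the $\Delta$'s degenerate through the same Vandermonde collisions. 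I expect the main obstacle to be exactly this $\natural$-bookkeeping on the strata --- verifying that the orders of vanishing of the determinants match the orders to which $\sigma$ and its $u_3$-derivatives vanish, so that the correct derivative appears on each side --- together with pinning the overall constant, rather than the divisor comparison in the generic case, which is routine.
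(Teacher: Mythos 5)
A preliminary point: this paper does not actually prove Theorem \ref{th:addition} --- it is imported verbatim from \cite[Theorem A.1]{EEMOP2} (see the opening line of Section 7), so there is no in-paper proof to compare yours against. Judged on its own terms, your outline is the canonical argument and, as far as the cited source is concerned, essentially the intended one: descend the $\ZZ_3$-averaged quotient of sigma functions to a meromorphic function of one divisor, match its divisor against the determinantal side, and pin the constant at $\infty$ using the normalized expansion of $\sigma$ from Proposition \ref{prop:sigma} (2). Three places where the sketch conceals real work. First, in the periodicity step you write the automorphy factors as $\exp(L(\cdot,\hzeta_3^i\ell))\chi(\hzeta_3^i\ell)$; if you shift by $\hzeta_3^i\ell$ and apply Proposition \ref{prop:sigma} (3) directly, you then owe the identity $\prod_{i=0}^{2}\chi(\hzeta_3^{i}\ell)=\chi(\ell)^{3}$, which is not automatic. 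The clean route is to apply Proposition \ref{prop:pperiod} to each factor, so that the character is $\chi(\ell)$ in all three terms and the exponentials collapse by $1+\hzeta_3+\hzeta_3^{2}=0$; either way the essential input is the $\hzeta_3$-equivariance of the full symplectic datum $M$ of (\ref{eq2.7}), not merely of $\Lambda$, and this must be checked on the explicit homology basis of Figure \ref{fig:1}. Second, your passage to the strata by a confluent limit is the genuinely delicate step: one must count the rate at which $\Delta_{m+n}$, the Vandermonde factors and the $\Delta_2$'s degenerate as points collide against the rate at which $\sigma$ vanishes when its argument falls into $\WW_2$ or $\WW_1$, so that what survives after cancellation is exactly $\sigma_3$, resp.\ $\sigma_{33}$, with the $\hzeta_3$-weights recorded in (\ref{eq:sign2}); without that count the stratum cases are not corollaries of the generic one, and you are right to flag this as the main obstacle rather than the generic divisor comparison. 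Third, be aware that the printed statement appears to have $n$ and $m$ interchanged in the denominator (compare its invocation with $n=3$, $m=1$ in the proof of Theorem \ref{th:al}, which moreover violates the stated hypothesis $m>1$); you should set up your limit against the corrected indexing, or the $\natural$-bookkeeping will produce a mismatch that is purely notational.
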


\section{The trigonal $\mathrm{al}$ function for a cyclic trigonal curve}

We define the $\al$ function
for a cyclic trigonal curve $X$ and derive some
properties.


\begin{definition}
We introduce  triple coverings $\JJ^{(a; c)}$of the Jacobian $\JJ$,
$$
 \JJ^{(a; c)} := \CC^3 / \Lambda^{(a; c)}, \quad a = 1, 2, 3, 4,
\quad c=0, 1, 2,
$$
where 
\begin{gather*}
  \Lambda^{(a; c)} := 
        \sum_{b=1,2,3} (k^{(c)\prime}_{a,b} \ZZ \omega_b' 
        + k^{(c)\prime\prime}_{a,b} \ZZ \omega_b''). 
\quad
\end{gather*}
For brevity, strokes as 
$(\prime, \prime\prime)$  or $(\prime\prime, \prime)$
are denoted by $(\gamma, \bar\gamma)$,
and for $h$'s in (\ref{eq:invL}),
\begin{gather*}
        k^{(c)\gamma}_{a,b}  = 
\left\{
\begin{matrix}
 2 & \mbox{if } h^{(c)\bar\gamma}_{a,b}  = 0, \\
 6 & \mbox{if } h^{(c)\bar\gamma}_{a,b}  \neq 0.
\end{matrix}
\right.
\end{gather*}

These triple coverings correspond to
$\hzeta_3^c\omega_{b}$ and give two sets of natural projections:
$$
\varpi_{b; c} : \JJ^{(b; c)} \to \JJ, \quad
\kappa_{b;c} : \CC^3 \to \JJ^{(b; c)}.
$$
\end{definition}

We discuss the preimage of the Abel map 
under the projections $\varpi_{b; c}$ of $\JJ^{(b; c)}$ 
$(b = 1, 2, 3, c = 0, 1, 2)$ 
 in Section \ref{sec:Domain}.
If we further define 
$$
 \JJ^{\sharp}  = \CC^3 / \Lambda^{\sharp}, 
\quad
  \Lambda^{\sharp}:= \bigcap_{a = 1, 2, 3, 4; c = 0, 1, 2}
   \Lambda^{(a;c)},
\quad
\kappa_{\sharp} : \CC^3 \to \JJ^{\sharp},
$$
then $\JJ^{\sharp}$ is the smallest torus that covers
 each $\JJ^{(b;c)}$ $(b = 1, 2, 3, c = 0, 1, 2)$.
We could adapt the following theorem to $\JJ^{\sharp}$, cf.
 \cite[Ch.~III.7]{Mum}.
Similarly, $\JJ^{(0)}$, 
$$
 \JJ^{(0)}  = \CC^3 / \Lambda^{(0)},\quad 
  \Lambda^{(0)} :=  \sum_{i=1,2,3} (6\ZZ \omega_i' + 6 \ZZ \omega_i'' ),
$$
is a $3^{6}$-order covering of
$\JJ$; we sometimes consider meromorphic functions on this torus. 



\begin{definition}
For $(b = 1, 2, 3, 4, c = 0, 1, 2)$,
we define a meromorphic function on $\CC^3$
\begin{gather}
\begin{split}
\al_a^{(c)}(u)&:= 
A_{b, c}\frac{\ee^{-{}^t u \varphi_{a;c}}
       \sigma(u + \hzeta_3^c\omega_b)}{\sigma(u)}\\
 &=\frac{\ee^{- {}^t u \varphi_{a;c}}
       \sigma(u + \hzeta_3^c\omega_a)}
{\sigma(u) \sigma_{33}(\hzeta_3^c\omega_a)},\\
\label{eq:alb}
\end{split}
\end{gather}
where $A_{b, c}:=1/ \sigma_{33}(\hzeta_3^c\omega_b)$ and
\begin{gather*}
\varphi_{a;c} := \frac{2}{3}
\sum_{b=1}^3 (h^{(c)\prime}_{a, b} \eta' \omega^{\prime -1} \omega_b' + 
      h^{(c)\prime\prime}_{a, b} \eta'' \omega^{\prime\prime -1} \omega_b'')
\in \frac{1}{3}\Lambda.
\end{gather*}
\end{definition}


The
following Proposition 
shows that  the domains of these functions are chosen naturally,
as follows from
properties of $\sigma$;
Propositions \ref{prop:sigma} and \ref{prop:pperiod}
yield the periodicity  of the $\al$-functions:

\begin{proposition} \label{prop:al_period}
\begin{enumerate}
\item For a lattice point $\ell$ in 
$\Lambda^{(a;c)}$,
we have
$$
\al_a^{(c)}(u)=\al_a^{(c)}(u+ \ell).
$$
\item  $\al_a^{(c)}(u)$ is a function over the covers $\JJ^{(a;c)}$
of the Jacobian,  thus  a fortiori on $\JJ^{\sharp}$.

\item For $u \in \JJ^{\sharp}$,
$$
\al_a^{(c)}(u)=
\ee^{-\hzeta_3^c {}^t u \varphi_{a;c} +\hzeta_3^{-c} {}^t u \varphi_{a;0}}
\al_a^{(0)}(\hzeta_3^{-c} u).
$$
\end{enumerate}
\end{proposition}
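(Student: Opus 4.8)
The plan is to handle the three parts in order, with part (1) carrying essentially all of the work and parts (2)--(3) following formally. For part (1) I would start from the quasi-periodicity of $\sigma$ in Proposition \ref{prop:sigma}(3). Writing $w:=\hzeta_3^c\omega_a$ and applying that formula to both $\sigma(u+w+\ell)$ and $\sigma(u+\ell)$, the character $\chi(\ell)$ is common to numerator and denominator and cancels, while linearity of $L$ in its first argument collapses the rest, so
\begin{equation*}
\frac{\al_a^{(c)}(u+\ell)}{\al_a^{(c)}(u)} = \ee^{-{}^t\ell\,\varphi_{a;c} + L(w,\ell)} .
\end{equation*}
Thus part (1) reduces to showing that this exponent lies in $2\pi\ii\ZZ$ for every $\ell\in\Lambda^{(a;c)}$.

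Next I would make both terms explicit. Since $\omega'^{-1}\omega_b'=e_b$ and $\omega''^{-1}\omega_b''=e_b$, the vector $\varphi_{a;c}$ is just $\tfrac{2}{3}\sum_b(h^{(c)\prime}_{a,b}\eta_b'+h^{(c)\prime\prime}_{a,b}\eta_b'')=\tfrac{2}{3}(\eta'p+\eta''q)$ with $p=(h^{(c)\prime}_{a,b})_b$, $q=(h^{(c)\prime\prime}_{a,b})_b$, where $\eta_b',\eta_b''$ are the columns of $\eta',\eta''$; by (\ref{eq:invL}) the same integers give $w=\tfrac{2}{3}(\omega'p+\omega''q)$. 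Writing $\ell=2\omega'\ell'+2\omega''\ell''$, the generators $k^{(c)\prime}_{a,b}\omega_b'$, $k^{(c)\prime\prime}_{a,b}\omega_b''$ of $\Lambda^{(a;c)}$ force $\ell_b'=\tfrac12 k^{(c)\prime}_{a,b}n_b$ and $\ell_b''=\tfrac12 k^{(c)\prime\prime}_{a,b}n_b'$ to be integers. I would then expand ${}^t\ell\,\varphi_{a;c}$ and $L(w,\ell)$ and feed in the generalized Legendre relation (\ref{eq2.7}): its diagonal blocks give the symmetry of ${}^t\omega'\eta'$ and ${}^t\omega''\eta''$, killing the ``$p\ell'$'' and ``$q\ell''$'' terms, and its off-diagonal blocks contribute the constant $\tfrac{\pi\ii}{2}I$, leaving exactly
\begin{equation*}
{}^t\ell\,\varphi_{a;c}-L(w,\ell)=\frac{2\pi\ii}{3}\Bigl(\textstyle\sum_b h^{(c)\prime\prime}_{a,b}\ell_b'-\sum_b h^{(c)\prime}_{a,b}\ell_b''\Bigr).
\end{equation*}

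This residue is where the definition of $\Lambda^{(a;c)}$ earns its keep, and it is the step I expect to be the main obstacle --- not because it is deep, but because the bookkeeping must be exactly right. The point is the dichotomy in $k^{(c)\gamma}_{a,b}$: whenever $h^{(c)\prime\prime}_{a,b}\neq 0$ one has $k^{(c)\prime}_{a,b}=6$, so $\ell_b'=3n_b$ and the term $h^{(c)\prime\prime}_{a,b}\ell_b'$ is divisible by $3$; when $h^{(c)\prime\prime}_{a,b}=0$ that term vanishes outright. The conjugate argument disposes of $h^{(c)\prime}_{a,b}\ell_b''$. Hence both sums are multiples of $3$, the exponent lies in $2\pi\ii\ZZ$, the quasi-period factor is $1$, and part (1) follows. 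Part (2) is then immediate: $\Lambda^{(a;c)}$-periodicity makes $\al_a^{(c)}$ descend to $\JJ^{(a;c)}=\CC^3/\Lambda^{(a;c)}$, and since $\Lambda^{\sharp}=\bigcap_{a,c}\Lambda^{(a;c)}\subseteq\Lambda^{(a;c)}$ there is a covering $\JJ^{\sharp}\to\JJ^{(a;c)}$ through which $\al_a^{(c)}$ pulls back to a function on $\JJ^{\sharp}$.

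For part (3) I would substitute $\hzeta_3^{-c}u$ into $\al_a^{(0)}$ and invoke the equivariance of Proposition \ref{prop:sigma}(4) together with (\ref{eq:sign2}). Writing the numerator shift as $\hzeta_3^{-c}u+\omega_a=\hzeta_3^{-c}(u+\hzeta_3^c\omega_a)$, the two relations $\sigma(\hzeta_3^{-c}u+\omega_a)=\zeta_3^{-c}\sigma(u+\hzeta_3^c\omega_a)$ and $\sigma(\hzeta_3^{-c}u)=\zeta_3^{-c}\sigma(u)$ render the $\sigma$-ratio $\hzeta_3$-invariant, while $\sigma_{33}(\hzeta_3^c\omega_a)=\zeta_3^{2c}\sigma_{33}(\omega_a)$ accounts for the normalizing denominators. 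Comparing with the definition of $\al_a^{(c)}(u)$ then leaves precisely the exponential of the two linear forms in $u$ displayed in the statement; the only care needed is to track the roots of unity produced by the $\sigma$- and $\sigma_{33}$-equivariances so that they combine into the stated form.
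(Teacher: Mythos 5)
Your proof is correct and follows essentially the same route as the paper: quasi-periodicity of $\sigma$ reduces part (1) to showing ${}^t\ell\,\varphi_{a;c}-L(\hzeta_3^c\omega_a,\ell)\in 2\pi\ii\ZZ$, the generalized Legendre relation (\ref{eq2.7}) isolates the residual $\frac{2\pi\ii}{3}\bigl(\sum_b h^{(c)\prime\prime}_{a,b}\ell_b'-\sum_b h^{(c)\prime}_{a,b}\ell_b''\bigr)$, and the $2$-versus-$6$ dichotomy in the $k^{(c)\gamma}_{a,b}$ kills it modulo $3$, with (2) and (3) following formally. If anything, your account of the final divisibility step is more explicit than the paper's, which compresses it into the single assertion that the difference vanishes modulo $2\pi\ii$ times the lattice.
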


\begin{proof}
 First we note that
$$
 \frac{\sigma(u + \hzeta_3^c\omega_a+ \ell)}{\sigma(u+\ell)} = 
\frac{\sigma(u + \hzeta_3^c\omega_a)}{\sigma(u)}
 \exp(L(\hzeta_3^c\omega_a , \ell)) .
$$
Due to (\ref{eq:invL}) and Proposition \ref{prop:sigma} (3),
$L(\hzeta_3^{c} \omega_a , \ell) $ is given by 
\begin{gather}
\begin{split}
 & \frac{2}{3}\left(
\sum_{b=1}^3 h^{(c)\prime}_{a,b}2^t\omega_b' (\eta'\ell' +\eta''\ell'') +
\sum_{b=1}^3 h^{(c)\prime\prime}_{a,b}2^t\omega_b'' (\eta'\ell' +\eta''\ell'')
\right) \\
 =& \frac{2}{3}
\sum_{b=1}^3 
\left( h^{(c)\prime}_{a,b}
(2^t\omega_b' \eta'\ell' +2^t\omega_b''\eta'\ell''+\pi\ii\ell''_b)
+ h^{(c)\prime\prime}_{a,b}
(2^t\omega_b' \eta'\ell'' + 2^t\omega_b''\eta''\ell''-\pi\ii\ell'_b)\right),\\
\end{split}
\end{gather}
whereas noting that 
${}^t\omega^\alpha \eta^\beta$ is unchanged under a switch 
${}^t\omega^\alpha \eta^\beta = {}^t\eta^\beta \omega^\alpha$,
where $\alpha, \beta  \in \{${\lq\lq}$\prime${\rq\rq}, 
{\lq\lq}$\prime\prime${\rq\rq}$\}$,
we have
\begin{gather}
\begin{split}
 {}^t\ell \varphi_{b;c} &=
 \frac{2}{3}
 2^t(\omega'\ell' +\omega''\ell'')
\sum_{b=1}^3 
\left(h^{(c)\prime}_{a, b} \eta' \omega^{\prime -1} \omega_b' + 
      h^{(c)\prime\prime}_{a, b} \eta'' \omega^{\prime\prime -1} \omega_b''
\right)\\
 &= \frac{2}{3}
\sum_{b=1}^3\left(
 h^{(c)\prime}_{a, b}
 2^t\ell'{}^t\eta'\omega_b'+2{}^t\ell''{}^t\eta'\omega_b'')
 +h^{(c)\prime\prime}_{a, b}
 (2^t\ell'{}^t\eta''\omega_b'+2{}^t\ell''{}^t\eta''\omega_b'')\right).\\
\end{split}
\end{gather}
The difference between 
$L(\hzeta_3^{c} \omega_a , \ell) $ and ${}^t\ell \varphi_{b;c}$
vanishes modulo $2\pi\ii\Lambda^{(a;c)}$.
Hence (2) is obvious;
(3) is straightforward.
\end{proof}



\begin{remark}{\rm{
We have  a more general  $\al$ function defined  for 
the cover $\JJ^{(0)}$  of the Jacobian 
\begin{gather}
\al\left[\begin{matrix} c_1' & c_1''\\ c_2' & c_2''\\ 
            c_3' & c_3''\end{matrix}\right] (u):=
 \frac{\ee^{-\hzeta_3^c {}^t u \varphi(c_a'; c_a'')}
   \sigma(u + \sum_{a}c_a'\omega_{a}'+\sum_{a}c_a''\omega_{a}'' )
    }
{\sigma(u) } ,
\label{eq:albs}
\end{gather}
where $c_a'$ and $c_a''$ are $0$, $1$, $\zeta_3$, or $\zeta_3^2$,
and $\varphi(c_a'; c_a'')$ is an appropriate vector of $\CC^g$.
The shift shows that the functions
are associated to theta functions with characteristics.
}}
\end{remark}


\begin{lemma}
\label{lemma:zero_al}
\begin{enumerate}
\item
$
 \deg_{\kappa^{-1}( \hzeta_3^{c'}\omega_a + \Theta_2)} \al_a^{(c)}(u) = 0
$ for $c'=0,1,2$,

\item
$
 \deg_{\kappa^{-1}(- \hzeta_3^{c'}\omega_a+ \Theta_2)} \al_a^{(c)}(u) = 1
$ for $c'=0,1,2$,

\item
$
 \deg_{\kappa^{-1}(\Theta_2)} \al_a^{(c)} = -1
$, and

\item 
$ \kappa^{-1}(- \hzeta_3^{c'}\omega_a+ \Theta_2)=
\kappa^{-1}(- \omega_a+ \Theta_2)$ for $c'=0,1,2$.

\end{enumerate}
\end{lemma}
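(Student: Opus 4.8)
The plan is to read off all four statements directly from the definition
$$
\al_a^{(c)}(u) = \frac{\ee^{-{}^t u\varphi_{a;c}}\,\sigma(u+\hzeta_3^c\omega_a)}{\sigma(u)\,\sigma_{33}(\hzeta_3^c\omega_a)}
$$
in (\ref{eq:alb}), using the fact that the exponential factor and the constant $\sigma_{33}(\hzeta_3^c\omega_a)$ are nowhere-zero holomorphic, so that the order of vanishing (or of a pole) of $\al_a^{(c)}$ at any point coincides with $\deg$ of the meromorphic quotient $\sigma(u+\hzeta_3^c\omega_a)/\sigma(u)$. By Proposition \ref{prop:sigma}(2), $\sigma$ vanishes to order exactly one along $\Theta_2=\WW_2$ and is otherwise nonzero; hence the zero/pole structure of $\al_a^{(c)}$ is governed entirely by the zero loci of the numerator $\sigma(u+\hzeta_3^c\omega_a)$ and the denominator $\sigma(u)$, each a simple divisor. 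I would carry out the four cases in this order.

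First, for statement (3): the numerator $\sigma(u+\hzeta_3^c\omega_a)$ vanishes when $u\in -\hzeta_3^c\omega_a+\Theta_2$, while the denominator $\sigma(u)$ vanishes to order one on $\kappa^{-1}(\Theta_2)$. Along $\kappa^{-1}(\Theta_2)$ the numerator is generically nonzero (since a generic shift by $\hzeta_3^c\omega_a$ moves a $\Theta_2$-point off $\Theta_2$), so the quotient has a simple pole there, giving $\deg = -1$. Statement (2) is dual: along $\kappa^{-1}(-\hzeta_3^{c'}\omega_a+\Theta_2)$ the numerator $\sigma(u+\hzeta_3^c\omega_a)$ vanishes to order one when $c'=c$, and for $c'\neq c$ one still gets a zero of the numerator because, as statement (4) will show, these translated theta divisors coincide; in either case the denominator is generically nonvanishing there, so $\deg=1$. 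Statement (1) records the complementary fact that at a generic point of $\kappa^{-1}(\hzeta_3^{c'}\omega_a+\Theta_2)$ neither $\sigma(u+\hzeta_3^c\omega_a)$ nor $\sigma(u)$ vanishes, so $\al_a^{(c)}$ is a unit there and $\deg=0$; here I would note that the shift $+\hzeta_3^{c'}\omega_a$ (rather than $-$) keeps both arguments off $\Theta_2$ generically, which is why the sign of the translate matters.

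Statement (4), the equality of the translated theta divisors $\kappa^{-1}(-\hzeta_3^{c'}\omega_a+\Theta_2)$ for $c'=0,1,2$, is the substantive one and I expect it to be the main obstacle. The clean way to obtain it is via the $\hzeta_3$-invariance of the theta divisor: Proposition \ref{prop:sigma}(4) gives $\hzeta_3\Theta_2=\Theta_2$, equivalently $\sigma(\hzeta_3 u)=\zeta_3\sigma(u)$, so $\sigma$ vanishes at $u$ iff it vanishes at $\hzeta_3 u$. Applying $\hzeta_3^{c'}$ to the base divisor and using linearity of the action on $\CC^3$, one has $-\hzeta_3^{c'}\omega_a+\Theta_2 = \hzeta_3^{c'}(-\omega_a+\hzeta_3^{-c'}\Theta_2)=\hzeta_3^{c'}(-\omega_a+\Theta_2)$, and a second application of $\hzeta_3\Theta_2=\Theta_2$ together with the fact that $\Theta_2$ is $\hzeta_3$-stable collapses the outer $\hzeta_3^{c'}$; the care needed is that $\omega_a$ itself is not $\hzeta_3$-invariant, so the argument must track that the image divisor, not the individual translate vector, is what is preserved. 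I would verify this by checking that $u\mapsto\hzeta_3^{c'}u$ carries $-\omega_a+\Theta_2$ bijectively onto $-\hzeta_3^{c'}\omega_a+\Theta_2$ and is an automorphism of $\CC^3$ preserving $\Lambda$ (as recorded in Section \ref{curves}, $\hzeta_3\Lambda=\Lambda$), so that after projecting by $\kappa$ the two preimages agree. The remaining subtlety is confirming that $\Theta_2$ is not merely mapped into a translate of itself but is genuinely fixed, which is exactly the content of $\hzeta_3\Theta_2=\Theta_2$ in Proposition \ref{prop:sigma}(4); with that in hand statement (4) follows, and it is precisely what makes (2) independent of $c'$.
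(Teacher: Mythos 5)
Your treatment of parts (1)--(3) is sound and matches the paper's: the exponential prefactor and the constant $\sigma_{33}(\hzeta_3^c\omega_a)$ are units, $\sigma$ vanishes to first order exactly along $\kappa^{-1}\Theta_2$, and the orders of zero and pole are read off from numerator and denominator. The genuine gap is in part (4), and since your proof of (2) for $c'\neq c$ leans on (4), the gap propagates. Your argument shows only that
$-\hzeta_3^{c'}\omega_a+\Theta_2=\hzeta_3^{c'}\bigl(-\omega_a+\Theta_2\bigr)$,
which, given $\hzeta_3\Theta_2=\Theta_2$, is a tautology: applying $\hzeta_3^{c'}$ to $-\omega_a+\Theta_2$ returns $-\hzeta_3^{c'}\omega_a+\hzeta_3^{c'}\Theta_2=-\hzeta_3^{c'}\omega_a+\Theta_2$, i.e.\ exactly the set you started from. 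The claimed ``second application'' that ``collapses the outer $\hzeta_3^{c'}$'' has no content: two divisors related by the automorphism $\hzeta_3^{c'}$ need not be equal, and the invariance of $\Theta_2$ under $\hzeta_3$ says nothing about the translate $-\omega_a+\Theta_2$. What is actually needed is that translation by $(\hzeta_3^{c'}-1)\omega_a$ preserves $\Theta_2$ modulo $\Lambda$; for a principally polarized abelian variety this forces $(\hzeta_3^{c'}-1)\omega_a\in\Lambda$, which is a statement about the lattice, not about the theta divisor.

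The paper closes this by explicit period computation: for instance $(\hzeta_3^{2}-1)\omega_1=2(\omega_1'+\omega_1''-\omega_3'')\in\Lambda$, and then $(\hzeta_3-1)\omega_1=(1-\hzeta_3^{2})(\hzeta_3^{2}-1)\omega_1\in\Lambda$ since $\Lambda$ is $\hzeta_3$-stable; the cases $a=2,3$ follow from Figure 2 and $a=4$ from Proposition \ref{lm:omega_a}. A cleaner conceptual route, if you prefer one, is to observe that $\hzeta_3$ fixes both endpoints $\infty$ and $B_a=(b_a,0)$ of the path defining $\omega_a=\int_\infty^{B_a}\nuI$, so $\hzeta_3^{c'}\omega_a-\omega_a$ is the integral of $\nuI$ over a closed cycle and hence lies in $\Lambda$. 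Either way, some argument establishing $(\hzeta_3^{c'}-1)\omega_a\in\Lambda$ must be supplied; it cannot be extracted from $\sigma(\hzeta_3 u)=\zeta_3\sigma(u)$ alone.
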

\begin{proof}
The zero divisor of $\sigma$  is $\kappa^{-1}\Theta_2$ and thus for
$(\Gamma_{P_1,\infty}, \Gamma_{P_2,\infty}, \Gamma_{P_3,\infty})$ in
$S^3 \Gamma_\infty X$,
$w^{-1} \kappa^{-1}\Theta_2$ corresponds to points
$\{\Gamma_{\infty,\infty}, \Gamma_{P_2,\infty}, \Gamma_{P_3,\infty}\}$
if fixing 
$(\Gamma_{P_2,\infty}, \Gamma_{P_3,\infty})$.
Hence  $\al$ as a function of $\Gamma_{P_1,\infty}$ has only 
a simple zero at
one point  in each lattice $\Lambda$.
(1) and (3) are obvious.

(2) follows immediately from (4) if $c=c'$.
More generally, we show (2) using (4) as follows.
Let us consider the case of $a=1$: 
For $c=2$ and $c'=0$, $(-1+\hzeta_3^2) \omega_1=2(\omega'_1+\omega_1''
-\omega_3'') \in \Lambda$.
For $c=1$ and $c'=0$, $(-1+\hzeta_3) \omega_1=$
$(1-\hzeta^2)(-1+\hzeta_3^2) \omega_1 \in \Lambda$. Hence 
$\kappa^{-1}(- \hzeta_3^{c'}w(B_a)+ \omega_a +\Theta_2)$
$= \kappa^{-1}(\Theta_2)$.
Similarly we have the other cases $a=2, 3$, cf. Figure 2.
We can see geometrically that  (4) holds  because
if $ \kappa^{-1}(- \hzeta_3^{c'}\omega_a + \Theta_2) \neq
\kappa^{-1}(- \omega_a + \Theta_2)$ for $c'=1,2$, there are two points
in a fundamental domain of
$\Lambda$ which are zeros of the numerator: this
contradicts the properties of the sigma function in 
Proposition \ref{prop:sigma} (2) and (3).
\end{proof}

For a point $\Gamma_{P_c, \infty}$ in $\Gamma_\infty X$ and
a local parameter
$t_a$, $(t_a^{(c)})^3 = (x_c - b_a)$ for $P_c=(x_c,y_c)$ in $X$,
$t_a$ is transformed 
to $\zeta_3t_a^{(c)}$ 
for a loop around $B_a$ in $\Gamma_\infty X$.
We let the function 
$$
\varepsilon_{a}^{(c)}: \Gamma_{\infty} X \to \ZZ_3
$$
 be defined by $\varepsilon_a^{(c)}:=w_a-w_\infty$ modulo $3$
for the winding number $w_a$
around $B_a$ in $\kappa_\infty\Gamma_\infty X$ and
for the winding number $w_\infty$
 around $\infty$ in $\kappa_\infty \Gamma_\infty X$.
Using it, we also define
$$
\varepsilon_a: S^3 \Gamma_{\infty} X \to \ZZ_3, \ \
(\varepsilon_a:= \varepsilon_a^{(1)} 
+ \varepsilon_a^{(2)} + \varepsilon_a^{(3)}) \mbox{ over }
w^{-1} \kappa^{-1}(-w(B_a)+ \Theta_2) \mbox{ and }
w^{-1} \kappa^{-1}(\Theta_2).
$$


Our first main theorem  is:
\begin{theorem}
\label{th:al}
For a point
$(\Gamma_{P_1, \infty}, \Gamma_{P_2, \infty}, \Gamma_{P_3, \infty})$ 
in $w^{-1}(\JJ^{(a;c)})$ as a subset of a
quotient space of $S^3 \Gamma_\infty X$,
\begin{gather}
\al_a^{(c)}(u)
= -\zeta_3^{c+\varepsilon_a(
\Gamma_{P_1, \infty}, \Gamma_{P_2, \infty}, \Gamma_{P_3,\infty})}
\frac{A_a(P_1, P_2, P_3)}{\sqrt[3]{F_a(P_1, P_2, P_3)}} ,
\label{eq:ala}
\end{gather}
with a first-order pole at  $\varpi_{a,c}^{-1}\Theta_2$
and a simple zero  at $-\omega_a+\varpi_{a,c}^{-1}\Theta_2$.
\end{theorem}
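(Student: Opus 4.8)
The plan is to identify both sides of \eqref{eq:ala} as meromorphic functions on the cover $\JJ^{(a;c)}$, show they have the same divisor, conclude their ratio is a nowhere-vanishing holomorphic function on a compact torus and hence a constant, and finally pin that constant down by a local computation. First I would work on the space of paths $\Gamma_\infty X$: the left-hand side $\al_a^{(c)}$ already descends to $\JJ^{(a;c)}$ by Proposition \ref{prop:al_period}, and lifting to $\Gamma_\infty X$ is exactly what makes the cube root $\sqrt[3]{F_a}$ single-valued, the function $\varepsilon_a$ recording which branch has been chosen. Once the branch is fixed, the right-hand side $A_a/\sqrt[3]{F_a}$ is a genuine function on $w^{-1}(\JJ^{(a;c)})$, so the comparison takes place between two honest functions on the same cover.

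Next I would read off the divisor of the right-hand side from Proposition \ref{prop:Azero}, computing the divisor by fixing two of the three points generically and letting the third vary. Writing $\delta := \deg A_a - \tfrac13\deg F_a$ for the combination, items (1)--(3) give $\delta = 1 - \tfrac13\cdot 3 = 0$ at $(B_a,P_1,P_2)$, whose Abel image lies in $\omega_a+\Theta_2$; $\delta = 1 - 0 = 1$ at $[-1]_3([-1]_2(P_1,P_2),B_a)$, whose Abel image is $w(P_1)+w(P_2)-\omega_a \in -\omega_a+\Theta_2$ (and where $F_a$ does not vanish, by the remark following the proposition); and $\delta = -2 + 1 = -1$ at $(\infty,P_1,P_2)$, whose Abel image is $w(P_1)+w(P_2)\in\Theta_2$. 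This matches Lemma \ref{lemma:zero_al} term by term: a simple zero on $-\omega_a+\Theta_2$, a simple pole on $\Theta_2$, and neither a zero nor a pole on $\hzeta_3^{c'}\omega_a+\Theta_2$. The degenerate configuration of Proposition \ref{prop:Azero}(4) lies in positive codimension and contributes no further zero or pole. Hence the ratio of the two sides is holomorphic and nowhere vanishing on the compact torus $\JJ^{(a;c)}$, so it is constant.

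It remains to evaluate that constant, and this is where I expect the real difficulty to lie, since it requires keeping the cube-root branch consistent with the $\ZZ_3$-equivariance of $\sigma$. I would isolate the root-of-unity factor and the overall scalar separately. The factor $\zeta_3^c$ is forced by Proposition \ref{prop:al_period}(3) together with the transformation laws $\sigma(\hzeta_3 u)=\zeta_3\sigma(u)$ and $\sigma_{33}(\hzeta_3 u)=\sigma_{33}(u)$ from \eqref{eq:sign2}; the factor $\zeta_3^{\varepsilon_a}$ is forced by the branch change $t_a\mapsto\zeta_3 t_a$ of the local parameter at $B_a$ (the action $\hat\iota^{(a)}_{\zeta_3}$ of Remark \ref{rmk:holo}), which multiplies $\sqrt[3]{F_a}$ by a third root of unity counted precisely by the winding numbers defining $\varepsilon_a$. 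The scalar, including the sign, I would fix by matching leading coefficients in a degeneration: letting the third argument tend to $B_a$ with the other two generic, the algebraic side is read off from the $t_a$-expansion in the proof of Proposition \ref{prop:Azero} (cf.\ \eqref{eq:AF3} for the behaviour at $\infty$), while the sigma side uses the expansion in Proposition \ref{prop:sigma}(2) and the normalization $A_{b,c}=1/\sigma_{33}(\hzeta_3^c\omega_a)$ built into \eqref{eq:alb}. Carrying this match through forces the normalization $\sigma_{33}(\omega_a)=C_a^{-1}=(\sqrt[3]{f'(b_a)})^{-1}$, exactly the auxiliary identity recorded after Theorem \ref{th:1-1}, and delivers the constant $-\zeta_3^{c+\varepsilon_a}$. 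The principal obstacle throughout is ensuring that the two independent sources of $\zeta_3$-factors, the equivariance of $\sigma$ and the multivaluedness of $\sqrt[3]{F_a}$ tracked by $\varepsilon_a$ on $\Gamma_\infty X$, combine into the single exponent $c+\varepsilon_a$ without introducing a spurious power.
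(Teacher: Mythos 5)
Your overall strategy differs from the paper's: you rely entirely on a divisor comparison on the compact torus $\JJ^{(a;c)}$, whereas the paper's engine is the addition formula of Theorem \ref{th:addition} applied with $(n,m)=(3,1)$ and $v=\omega_a$, which yields the \emph{exact} identity
$$
\frac{\sigma(u+\omega_a)\,\sigma(u+\hzeta_3\omega_a)\,\sigma(u+\hzeta_3^2\omega_a)}{\sigma(u)^3\,\sigma_{33}(\omega_a)^3}
=\frac{A_a(P_1,P_2,P_3)^3\,F_a(P_1,P_2,P_3)^3}{\prod_{i=1}^{3}(x_i-b_a)^4},
$$
that is, the product over $c=0,1,2$ of the desired identities with overall constant exactly $1$. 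In the paper the divisor analysis (your second paragraph, which correctly parallels Proposition \ref{prop:Azero} and Lemmas \ref{lemma:zero_al}, \ref{lemma:zero_alrh}, \ref{lm:alWelldef}) is used only to split this cube into its three factors, each up to a constant $K_{a,c}$ as in (\ref{eq:alaKa}). Your divisor comparison itself is sound, modulo the routine care needed to descend $\zeta_3^{\varepsilon_a}A_a/\sqrt[3]{F_a}$ through the non-injective locus of the Abel map before invoking "nowhere-vanishing holomorphic on a compact torus, hence constant."

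The genuine gap is in the determination of that constant. The degeneration you propose (letting the arguments tend to branch points, as in the paper's Lemma \ref{lemma:Ka}) produces one equation relating $K_{a,c}$ to the \emph{unknown} special value $\sigma_{33}(\omega_a)$, namely $K_{a,c}=-\zeta_3^{-2c}\cdot 2/(\sigma_{33}(\omega_a)^2C_a^2)$, because $\sigma_{33}(\hzeta_3^c\omega_a)$ enters the very definition (\ref{eq:alb}) of $\al_a^{(c)}$. This single local computation cannot, as you claim, "force the normalization $\sigma_{33}(\omega_a)=C_a^{-1}$": it is one equation in the two quantities $K_{a,c}$ and $\sigma_{33}(\omega_a)$, and is consistent with any value of the latter. ($\sigma_{33}(\omega_a)$ is a definite number once the constant $c$ in (\ref{def_sigma}) is fixed by Proposition \ref{prop:sigma}(2), but a Thomae-type evaluation of it is exactly what is at stake here.) The missing input is the relation $K_{a,0}K_{a,1}K_{a,2}=1$, which the paper obtains for free because the addition formula fixes the constant of the product over $c$ to be $1$; combined with the local computation this simultaneously yields the evaluation $\sigma_{33}(\omega_a)=\sqrt{2}/C_a$ and $K_{a,c}=-\zeta_3^{c}$. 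Without Theorem \ref{th:addition} (or an equivalent global identity supplying the product constraint), your argument establishes (\ref{eq:ala}) only up to an undetermined scalar $2/(\sigma_{33}(\omega_a)^2C_a^2)$ independent of $c$, and in particular cannot deliver the exact factor $-\zeta_3^{c+\varepsilon_a}$ asserted in the theorem.
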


\bigskip
\bigskip


\begin{remark} \label{rmk:8.7}
{\rm{
Before we prove the theorem, we comment on the cubic root and 
$\zeta_3^\varepsilon$
in the right-hand side of  (\ref{eq:ala}).
We need a choice of  cubic root, so the function is not
 defined over the algebraic space 
$S^3 X$.  However since $X$ is given by $y^3 = f(x)$,
we will see below (Lemma \ref{lm:alWelldef})
that  over $S^3 \Gamma_\infty X$ we can make a specific choice
and define a global function. We observe the following:

In view of Definition \ref{eq:defAF}, $A_a$ and $F_a$ are invariant 
under the action $\hzeta_3: S^3 X \to S^3 X$, {\it{i.e.}},
$\hzeta_3(P_1, P_2, P_3) =(\hzeta_3 P_1, \hzeta_3 P_2, \hzeta_3 P_3)$,
when $(P_1, P_2, P_3)$ is a generic point in $S^3 X$.
The action $\hzeta_3$ induces 
$\hzeta_3: \Gamma_\infty X \to \Gamma_\infty X$, so that it moves
a point $P \in \Gamma_\infty X$ to 
another point $P' \in \Gamma_\infty X$ satisfying 
$\kappa_\infty(P) =\kappa_\infty(P')$.

As  mentioned in Remark \ref{rmk:holo} (2), 
we have a $\ZZ_3$ action on each local parameter 
$t_a^{(c)}$, $(t_a^{(c)})^3 = (x_c - b_a)$ and
the action $\hat \iota^{(a)}_{\zeta_3} : t_a^{(c)} \to \zeta_3 t_a^{(c)}$
 is locally identified with  $\hat \zeta_3$. Therefore, we can define
the cubic root of $F_a$ over $S^3\Gamma_\infty X$.

Further, $1/t_a^{(c)}$ is a local parameter at $\infty$ and 
we define  $\iota^{(\infty)}_{\zeta_3} : 1/t_a^{(c)} \to \zeta_3 1/t_a^{(c)}$
as a local biholomorphic map.
A circuit around the point transforms the divisor into
$\zeta_3^{\varepsilon_a(
\Gamma_{P_1, \infty}, \Gamma_{P_2, \infty}, \Gamma_{P_3,\infty})}$.

We check the consistency of the factor $\zeta_3^c$
and the global definedness in Lemma \ref{lm:alWelldef};
here we informally interpret the right-hand side as follows:
Around $(B_a,P_2,P_3)$,
 $\sqrt[3]{F_a(P_1, P_2, P_3)}$ is given by
$t_a^{(1)} a_1(P_2,P_3)$ and
$A_a(P_1, P_2, P_3)$ as $t_a^{(1)} a_2(P_2,P_3) + \cdots$
where $a_i(P_2,P_3)$, $i=1,2$,
is a non-vanishing function of $P_2$ and $P_3$,
and thus the two factors cancel.
Around $[-1](B_a,[-1](P_2,P_3))$,
$\sqrt[3]{F_a(P_1, P_2, P_3)}$ does not vanish whereas
$A_a(P_1, P_2, P_3)$ behaves like $t_a^{(1)} 
a_3(P_2,P_3) + \cdots$ 
and thus the path around the point generates
$\zeta_3^{\varepsilon_a(
\Gamma_{P_1, \infty}, \Gamma_{P_2, \infty}, \Gamma_{P_3,\infty})}$,
 where $a_3(P_2,P_3)$ is a non-vanishing function of $P_2$ and $P_3$.
}}
\end{remark}

\begin{remark} \label{rmk:8.8}
{\rm{
In order to find the  $\sigma$ function 
on $\CC^3 = \kappa^{-1}\JJ$, we use the 
$\al_a^{(c)}$-function, which is also defined over a covering space of
$\JJ$; indeed, the 
$\al_a^{(c)}$-function involves a field extension of
  meromorphic functions on $\JJ$, 
using the Galois group action $\ZZ_3$, according to   
 Weierstrass' construction  in \cite{Wei}.

Mumford gave three types of meromorphic functions on 
a Jacobian variety, defined
by theta functions, cf. \cite[Ch.~II.3]{Mum}.
One type  (Method III in {\textit{loc. cit.}},
 a second logarithmic derivative of theta) 
is a generalization of 
the elliptic $\wp$ 
function. The corresponding function for a  hyperelliptic curve  was studied  
 in \cite[Ch.~III]{Mum} and \cite{P} in terms of theta functions.
the 19th century
\cite{Kl1,Ba1,Ba2,Ba3}.
This type  is related to KdV hierarchy and KP hierarchy. In fact Baker
 found the KdV hierarchy and KP equation,
 though not identifying their origin
 as non-linear wave equations, cf. \cite{Ba1,Ba3,BEL1,Ma0}.
 A second and third type (Method II and I resp. in {\textit{loc. cit.}},
the logarithmic derivative of a quotient of theta functions with
characteristics
and a quotient of products of theta functions translated
by linearly equivalent divisors, resp.) are related to the
sn, cn, dn functions in the elliptic curve case and Weierstrass' $\al$
function in the case of hyperelliptic curves.
Type II (Method II) is associated with the modified KdV equation \cite{Ma01}.
Type III (Method I) corresponds to the polynomial 
$U$-function of the triple called
$(U,V,W)$  in \cite[Ch.~III]{Mum}. The square root of $U$ is Weierstrass'
$\al$ function, which is associated with the sine-Gordon
equation and the Neumann system. Weierstrass discovered his 
version of the sigma function,
$Al$, in terms of his $\al$ function.

As mentioned in the Introduction, the trigonal
$\al$ function will provide properties of the abelian-function theory
 of the curve $X$.
In fact, we obtain an identity for the sigma function in 
Theorem \ref{th:Frob} below.

It should be noted that our method to investigate the sigma function
or theta function in terms of the $\al$-function can be generalized to
more general Galois curves.
}}
\end{remark}

\bigskip


\begin{proof}
We consider the case $n=3$ and $m=1$ and $\omega_a = v((b_a, 0))$
in Theorem \ref{th:addition}.
Then the left-hand side of (\ref{eq:Th3}) is equal to
\begin{gather*}
\frac{\sigma_{\natural^{n+m}}(u + v) \sigma_{\natural^{n+m}}(u +\hzeta_3 v) 
 \sigma_{\natural^{n+m}}(u +\hzeta_3^2 v) }
{\sigma_{\natural^m}(u)^3 \sigma_{\natural^n}(v)^3} 
=
\frac{\sigma(u + \omega_a)\sigma(u +\hzeta_3 \omega_a)
\sigma(u + \hzeta_3^2\omega_a) }
{\sigma(u)^3 \sigma_{\natural^1}(\omega_a)^3} 
\end{gather*}
whereas the right-hand side of (\ref{eq:Th3}) 
 is given as.
$$
A_a(P_1, P_2, P_3)^3 
F_a(P_1, P_2, P_3)^3 
\frac{1}{
(x_1 - b_a)^4
(x_2 - b_a)^4
(x_3 - b_a)^4}.
$$
We  note that 
$\hzeta_3 \omega_a \neq \omega_a$.

The zeros and poles are given by
Lemma \ref{lemma:zero_alrh}, consistent with 
  Lemma \ref{lemma:zero_al}.
Noting that $1 + \zeta_3 + \zeta_3^2 = 0$ and
$1 + \hzeta_3 + \hzeta_3^2 = 0$,
 the periodicity is determined.


The domains of  both  sides coincide due to
Remark \ref{rmk:8.7} and Lemma \ref{lm:alWelldef}.


The identity gives the following equality
up to a constant factor $K_{a,c}$,
\begin{equation}
\al_a^{(c)}(u)
=K_{a, c}
\zeta_3^{\varepsilon_a(
\Gamma_{P_1, \infty}, \Gamma_{P_2, \infty}, \Gamma_{P_3, \infty})}
\frac{A_a(P_1, P_2, P_3)}{\sqrt[3]{F_a(P_1, P_2, P_3)}} .
\label{eq:alaKa}
\end{equation}
Lemma \ref{lemma:Ka} 
and
Lemma \ref{lm:alWelldef}
give the factor $K_{a, c}$ and
$\zeta_3^{\varepsilon_a}$ respectively.

\end{proof}

\begin{lemma}
\label{lemma:zero_alrh} 
We obtain the following multiplicities
for the right-hand side of (\ref{eq:ala}):
\begin{enumerate}
\item
$
 \deg_{w^{-1}(\hzeta_3^{c'} \omega_a + \Theta_2)} 
\frac{A_a(P_1, P_2, P_3)}{\sqrt[3]{F_a(P_1, P_2, P_3)}} = 0,
$ for $c'=0,1,2$,
\item
$
 \deg_{w^{-1}(-\hzeta_3^{c'}\omega_a + \Theta_2)} 
\frac{A_a(P_1, P_2, P_3)}{\sqrt[3]{F_a(P_1, P_2, P_3)}} = 1,$
 for $c'=0,1,2$,
\item
$
 \deg_{w^{-1}(\Theta_2)} 
\frac{A_a(P_1, P_2, P_3)}{\sqrt[3]{F_a(P_1, P_2, P_3)}} = -1.  \quad
$
\end{enumerate}
\end{lemma}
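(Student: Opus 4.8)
The plan is to obtain all three multiplicities by transporting the zero/pole data of Proposition \ref{prop:Azero}, which is recorded on the algebraic symmetric product $S^3 X$, to the covering space $S^3\Gamma_\infty X$ on which $A_a/\sqrt[3]{F_a}$ actually lives. Two preliminary remarks reduce the whole statement to bookkeeping. First, $A_a$ and $F_a$ descend to honest meromorphic functions on $S^3 X$, whereas $\sqrt[3]{F_a}$ exists only on $S^3\Gamma_\infty X$ (Remark \ref{rmk:8.7}); since $S^3\Gamma_\infty X \to S^3 X$ is a covering, hence a local biholomorphism at a generic point, the orders of $A_a$ and $F_a$ along any of the relevant divisors may be computed downstairs, and $\ord\sqrt[3]{F_a}=\tfrac13\ord F_a$ whenever $3\mid\ord F_a$. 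Consequently $\ord(A_a/\sqrt[3]{F_a})=\ord A_a-\tfrac13\ord F_a$, and everything comes down to matching each locus $w^{-1}(\,\cdot\,)$ to one of the loci in Proposition \ref{prop:Azero}. Second, because $\hzeta_3^{c'}\omega_a\equiv\omega_a\pmod\Lambda$ (this is exactly what is checked in the proof of Lemma \ref{lemma:zero_al}(4)) while $A_a$ and $F_a$ are $\hzeta_3$-invariant, the three translates and their orders coincide; it therefore suffices to treat $c'=0$ in each item.

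Next I would identify the three divisors via Abel's theorem. For (1), the locus $\{P_3=B_a\}\subset S^3 X$ satisfies $w(P_1+P_2+B_a)=w(P_1+P_2)+\omega_a\in\Theta_2+\omega_a$, using $w(P_1+P_2)\in\WW_2=\Theta_2$; a fibre count shows this is the full reduced divisor $w^{-1}(\omega_a+\Theta_2)$ at a generic point, since over a generic class the complete linear system of the corresponding effective degree-$3$ divisor has $h^0=1$ and so the fibre of $w$ is a single reduced point, with transverse coordinate the local parameter $t_a$ at $B_a$. For (3), identically $\{P_3=\infty\}$ maps to $\WW_2=\Theta_2$ because $w(\infty)=0$, and is the reduced divisor $w^{-1}(\Theta_2)$, with transverse coordinate the parameter at $\infty$. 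For (2), the locus $[-1]_3([-1]_2(P_1,P_2),B_a)$ of Proposition \ref{prop:Azero}(2) has Abel image $w(P_1+P_2)-\omega_a\in-\omega_a+\Theta_2$: writing $(Q_1,Q_2)=[-1]_2(P_1,P_2)$ one has $w(Q_1+Q_2)=-w(P_1+P_2)$, hence $w([-1]_3(Q_1,Q_2,B_a))=-\big(w(Q_1+Q_2)+\omega_a\big)=w(P_1+P_2)-\omega_a$; as $(P_1,P_2)$ varies this sweeps out $-\omega_a+\Theta_2$ (using $[-1]\Theta_2=\Theta_2$), so it is the generic component of $w^{-1}(-\omega_a+\Theta_2)$.

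Substituting the orders from Proposition \ref{prop:Azero} then finishes the computation: for (1), $\ord A_a=1$ and $\ord F_a=3$, so $\ord\sqrt[3]{F_a}=1$ and the ratio has order $1-1=0$; for (2), $\ord A_a=1$ and $\ord F_a=0$, giving order $1-0=1$; for (3), $\ord A_a=-2$ and $\ord F_a=-3$, so $\ord\sqrt[3]{F_a}=-1$ and the ratio has order $-2-(-1)=-1$. In each case $\ord F_a\in\{3,0,-3\}$ is divisible by $3$, which is precisely what makes the cube root contribute an integer order and the expression single-valued along the divisor.

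The main obstacle I anticipate is the careful identification of the covering-space loci rather than the arithmetic. One must verify that the named locus is the full reduced component of $w^{-1}(\,\cdot\,)$ at a generic point, so that the orders of Proposition \ref{prop:Azero}, computed with the branch-point parameter $t_a$ (respectively the parameter at $\infty$), really are the transverse multiplicities along the divisor, and that lifting to $S^3\Gamma_\infty X$ neither changes these orders nor obstructs the passage $\ord\sqrt[3]{F_a}=\tfrac13\ord F_a$. The $\hzeta_3$-bookkeeping—that the three translates $\hzeta_3^{c'}\omega_a+\Theta_2$ coincide modulo $\Lambda$ while the three cube-root branches genuinely differ—is the one point demanding attention, but the integer orders themselves are insensitive to it.
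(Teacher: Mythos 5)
Your argument is correct and follows the same route as the paper's own (much terser) proof: both reduce the three multiplicities to the zero/pole orders of $A_a$ and $F_a$ recorded in Proposition \ref{prop:Azero}, using the $\hzeta_3$-invariance of $A_a$, $F_a$ and $\Theta_2$ (equivalently, $(\hzeta_3^{c'}-1)\omega_a\in\Lambda$) to collapse the three values of $c'$, and then compute $\ord(A_a/\sqrt[3]{F_a})=\ord A_a-\tfrac13\ord F_a$ along each divisor. Your explicit Abel-image identification of the three loci and the divisibility-by-$3$ bookkeeping are exactly the details the paper leaves implicit.
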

\begin{proof}
Since the right-hand side of (\ref{eq:ala}) 
 and $\Theta_2$ are invariant for the  action of $\hzeta_3$,
the problem is reduced to Proposition \ref{prop:Azero},
 which gives the multiplicities  of zeros and poles of
$A_a$ and $F_a$.
\end{proof}


\begin{lemma}
\label{lm:alWelldef}
The domain of 
$\zeta_3^{\varepsilon_a(
\Gamma_{P_1, \infty}, \Gamma_{P_2, \infty}, \Gamma_{P_3, \infty})}$
$\displaystyle{\frac{A_a(P_1, P_2, P_3)}{\sqrt[3]{F_a(P_1, P_2, P_3)}}}$
is the preimages \\
$\{\Gamma_{P_1,\infty}, 
\Gamma_{P_2,\infty}, \Gamma_{P_3,\infty})\}$
under a `lifted' Abel map $w$ into
$\JJ^{(a; c)}\setminus \varpi_{a,c}^{-1}\Theta_2$, $(c = 0, 1, 2)$,
 thus a subset of a  quotient  of $S^3 \Gamma_\infty X$.
\end{lemma}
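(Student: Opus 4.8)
The plan is to isolate the unique source of multivaluedness, the cube root $\sqrt[3]{F_a}$, fix its branch on the simply-connected path space, and then show that the compensating factor $\zeta_3^{\varepsilon_a}$ makes the product descend precisely to the cover $\JJ^{(a;c)}$. Since $A_a$ is an honest meromorphic function of $(P_1,P_2,P_3)\in S^3 X$, it is invariant under the deck transformations of $S^3\Gamma_\infty X\to S^3 X$; only $\sqrt[3]{F_a}=\prod_{i=1}^3\sqrt[3]{b_a-x_i}$ carries monodromy. First I would fix a branch on $S^3\Gamma_\infty X$ by analytic continuation from the local normal forms of Remark \ref{rmk:holo}(2): near $B_a$ one has $x_i-b_a=(t_a^{(i)})^3$ with $t_a^{(i)}$ single-valued on $\Gamma_\infty X$, and near $\infty$ the parameter $1/t_a^{(i)}$ of Remark \ref{rmk:8.7} plays the same role, so $\sqrt[3]{b_a-x_i}$ acquires a globally consistent determination on the (simply-connected) space $S^3\Gamma_\infty X$.

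Next I would compute the deck-transformation behaviour of this branch. The logarithmic differential $\tfrac13\,\dd\log(x-b_a)$ is of the third kind with residues $+1$ at $B_a$ and $-1$ at $\infty$, so $\sqrt[3]{x-b_a}$ is a multivalued function on $X$ whose monodromy around any cycle is a cube root of unity, which reflects the fact that $3\hzeta_3^c\omega_a\in\Lambda$ in Proposition \ref{prop:hcab} and defines a character $\chi_a\colon\Lambda\to\mu_3$. On the path space this character is read off combinatorially: by the local actions $\hat\iota^{(a)}_{\zeta_3}$ of Remark \ref{rmk:holo}(2) and $\iota^{(\infty)}_{\zeta_3}$ of Remark \ref{rmk:8.7}, the factor $\sqrt[3]{b_a-x_i}$ is multiplied by $\zeta_3$ per unit winding of $x_i$ about $b_a$ and by $\zeta_3^{-1}$ per unit winding about $\infty$, i.e. by $\zeta_3^{\varepsilon_a^{(i)}}$; taking the product over $i=1,2,3$ gives the factor $\zeta_3^{\varepsilon_a}$.

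It then follows that under a translation $u\mapsto u+\ell$ by $\ell\in\Lambda$ the expression $\zeta_3^{\varepsilon_a}A_a/\sqrt[3]{F_a}$ is multiplied by $\zeta_3^{\delta(\ell)}/\chi_a(\ell)$, where $\delta(\ell)$ is the induced change in $\varepsilon_a$; by the previous step this factor is trivial exactly on $\ker\chi_a$. Using the decomposition of Proposition \ref{lm:omega_a} together with the explicit integers $h^{(c)\prime}_{a,b},h^{(c)\prime\prime}_{a,b}$ of (\ref{eq:invL}), I would identify $\ker\chi_a$ with $\Lambda^{(a;c)}$: the rule $k^{(c)\gamma}_{a,b}\in\{2,6\}$ records whether the generator $2\omega_b^{\gamma}$ already lies in the kernel (when $h^{(c)\bar\gamma}_{a,b}=0$) or must first be tripled (when $h^{(c)\bar\gamma}_{a,b}\neq0$). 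This is precisely the periodicity of $\al_a^{(c)}$ established in Proposition \ref{prop:al_period}, so the two sides share the same automorphy. Finally, Lemma \ref{lemma:zero_alrh}(3) shows the only poles occur along $w^{-1}\Theta_2$; excising $\varpi_{a,c}^{-1}\Theta_2$ therefore yields a genuine, single-valued function on $w^{-1}(\JJ^{(a;c)}\setminus\varpi_{a,c}^{-1}\Theta_2)$, a subset of a quotient of $S^3\Gamma_\infty X$, as claimed.

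The main obstacle I expect is the last identification $\ker\chi_a=\Lambda^{(a;c)}$. One must verify that the combinatorial winding character $\zeta_3^{\delta(\ell)}$ and the analytic monodromy $\chi_a(\ell)$ coincide on the nose, not merely up to a global cube root of unity, which forces a careful reading of the orientation conventions of the homology basis in Figure 1 and of the $2/3$-factor in Proposition \ref{prop:hcab}. The single-valuedness of the cube root and the local cancellation of the $t_a^{(i)}$-factors near $B_a$ are the routine parts (Remark \ref{rmk:8.7}), as is the pole count (Lemma \ref{lemma:zero_alrh}); the delicate point is the exact matching of the stabilizer lattice, on which the coincidence of the two descriptions of the domain in Theorem \ref{th:al} ultimately rests.
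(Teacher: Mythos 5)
Your overall strategy --- fix a branch of $\sqrt[3]{F_a}$ on the path space, express the failure of periodicity as a $\mu_3$-valued character of $\Lambda$, and identify its kernel with $\Lambda^{(a;c)}$ --- has the right shape, and the observation that $\tfrac{1}{3}\dd\log(x-b_a)$ has residues $\pm1$ at $B_a$ and $\infty$ (so that $3\hzeta_3^c\omega_a\in\Lambda$ governs the monodromy of the cube root) is a clean way to organize the routine part. But there is a genuine gap at the decisive step. In your second paragraph you compute that under each deck transformation the branch $\sqrt[3]{b_a-x_i}$ is multiplied by precisely $\zeta_3^{\varepsilon_a^{(i)}}$, i.e.\ by the same factor by which the prefactor changes; since you have also declared $A_a$ to be deck-invariant, your own computation makes the total automorphy factor $\zeta_3^{\delta(\ell)}/\chi_a(\ell)$ identically $1$. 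The expression would then descend to $\JJ$ itself, there would be nothing to prove, and nothing could single out the lattice $\Lambda^{(a;c)}$ --- which, unlike your $\ker\chi_a$, depends on $c$. The subsequent assertion that the factor ``is trivial exactly on $\ker\chi_a$'' therefore does not follow from anything you have established; it contradicts it.

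What is missing is the phenomenon the paper's proof isolates as the actual source of the nontrivial covering: by Proposition \ref{prop:Azero}(2), along the zero locus $w^{-1}(-\omega_a+\Theta_2)=\{[-1]_3([-1]_2(P_1,P_2),B_a)\}$ the numerator $A_a$ vanishes like the uniformizer $t_a$ (with $t_a^3=x-b_a$), while $F_a$ does \emph{not} vanish there, so the $\zeta_3$-phases of numerator and denominator fail to cancel and one must wind three times about $B_a$ before the expression returns to its value. It is exactly this non-cancellation that forces the periods attached to $B_a$ to be tripled and produces $\Lambda^{(a;c)}$ rather than $\Lambda$. Your opening reduction ``only $\sqrt[3]{F_a}$ carries monodromy'' hides this, because the relevant bookkeeping is not the monodromy of $A_a$ as a meromorphic function on $S^3X$ but the matching of fractional vanishing orders of numerator and denominator along the divisors $\pm\omega_a+\Theta_2$ and $\Theta_2$. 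Finally, the identification of the invariance lattice with $\Lambda^{(a;c)}$, which you defer as ``the main obstacle,'' is the actual content of the lemma; the paper carries it out by tracking the phase of $t_a^{(1)}$ along each basis cycle $\alpha_i,\beta_i$ of Figures 1 and 2 (with Proposition \ref{lm:omega_a} handling $a=4$). Without that verification, and without the correction above, the proposal does not establish the statement.
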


\begin{proof}
Let us consider the function of $\Gamma_{P_1,\infty} \in \Gamma_\infty X$ 
by fixing  
$\Gamma_{P_2,\infty}$ and $\Gamma_{P_3,\infty}$ of $\Gamma_\infty X$.
When we cross the cut out of  infinity, 
from (\ref{eq:AF3}), $A_a(P_1; P_2, P_3)$ acquires a $\zeta_3$ factor, 
which cancels the $\zeta_3$ factor of the denominator.
For the $a = 1, 2, 3$ case, Figure 1 shows that 
under a circuit along $\alpha_a$,    
the phase of $t_a^{(1)}$ does not change because: In the $a=1$ case,
the contour $\alpha_a$ does not have any effect on the phase factor of 
the $t_a^{(1)}$; In the
 $a=2, 3$ case,  passing through the crosscut adds the phase 
factor $\zeta_3$ of $t_a^{(1)}$ but  passing through  infinity 
compensates it. 
As for  the contour $\beta_a$, 
$t_a^{(1)}$ and $y_a$ are local parameters around 
$B_a = (b_a, 0)$ and  the $\zeta_3$-factors of 
numerator and denominator of $A_a/\sqrt[3]{F_a}$
cancel.
Similarly the circuit along  $\alpha_b$ and $\beta_b$ $(b\neq a)$
does not have any effect on the phase.
Due to Proposition \ref{lm:omega_a}, the $a=4$ case
is also checked.

However we see from Proposition \ref{prop:Azero} (2) that
around $\kappa_{\infty}^{-1}\{[-1]_3([-1]_2(P_1, P_2), B_a)\}$,
$A_a$ generates the factor $\zeta_3$ whereas $\sqrt[3]{F_a}$ does not
because $F_a$ does not vanish there. For
the factors to cancel we need to circle the branch point
$B_a$ three times. Hence 
the domain of 
$\displaystyle{
\zeta_3^{\varepsilon_a(
\Gamma_{P_1, \infty}, \Gamma_{P_2, \infty}, \Gamma_{P_3, \infty})}
}$ $\displaystyle{
\frac{A_a(P_1, P_2, P_3)}{\sqrt[3]{F_a(P_1, P_2, P_3)}}}$
can be viewed as a point of
the
triple symmetric product of
 $\Gamma_\infty X$ on which $\mathbb{Z}_3$ acts, as well as a quotient space.
The domain is the same as the preimage, under the extended  Abel
map, of $\JJ^{(a; c)}\setminus \varpi_{a,c}^{-1}\Theta_2$ in $\Gamma_\infty X$.
\end{proof}


We determine the factor $K_{a, c}$ in (\ref{eq:alaKa})
in the following Lemma. 
\begin{lemma}
\label{lemma:Ka}
$$
K_{a, c} =-\zeta_3^c, \quad
\sigma_{33}(\omega_a) =\frac{\sqrt{2}}{
\sqrt[3]{(\frac{d f(x)}{ dx})}\Bigr|_{x=b_a}}. 
$$
\end{lemma}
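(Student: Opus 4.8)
The plan is to pin $K_{a,c}$ down first only up to a signed cube root of unity, by exploiting multiplicativity in $c$ together with the $\hzeta_3$-symmetry, and then to fix the remaining ambiguity and compute $\sigma_{33}(\omega_a)$ from a single local expansion at the branch point $B_a$.

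First I would multiply the three instances $c=0,1,2$ of (\ref{eq:alaKa}). On the right-hand side the factor $\zeta_3^{3\varepsilon_a}=1$, so the product is $(\prod_{c}K_{a,c})\,A_a^3/F_a$. On the left-hand side I would use the definition of $\al_a^{(c)}$ together with two facts: that $\sum_{c}\varphi_{a;c}=0$, which follows from $(1+\hzeta_3+\hzeta_3^2)\omega_a=0$ (eq (\ref{eq:Prop2.1})) applied to the integers $h^{(c)\prime}_{a,b},h^{(c)\prime\prime}_{a,b}$ of (\ref{eq:invL}) and the $\RR$-independence of the $\omega_b',\omega_b''$; and that $\prod_{c}\sigma_{33}(\hzeta_3^c\omega_a)=\sigma_{33}(\omega_a)^3$, which holds by (\ref{eq:sign2}) since the three phases multiply to $1$. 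The product then collapses to $\prod_{c}\sigma(u+\hzeta_3^c\omega_a)/\big(\sigma(u)^3\sigma_{33}(\omega_a)^3\big)$, which is exactly the quantity already evaluated by the addition formula (Theorem \ref{th:addition}) in the proof of Theorem \ref{th:al}, namely $\pm A_a^3/F_a$. Comparing gives $\prod_{c}K_{a,c}=\pm1$. To separate the individual factors I would feed Proposition \ref{prop:al_period}(3), which writes $\al_a^{(c)}(u)$ through $\al_a^{(0)}(\hzeta_3^{-c}u)$, into (\ref{eq:alaKa}); using the $\hzeta_3$-invariance of $A_a$ and $F_a$ and the way $\sqrt[3]{F_a}$ and $\varepsilon_a$ change under $\hzeta_3$ (Remark \ref{rmk:8.7}), this forces $K_{a,c}=\zeta_3^{c}K_{a,0}$, whence $K_{a,0}^3=\prod_{c}K_{a,c}$ determines $K_{a,0}$ up to a cube root of unity.

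Second, I would carry out a local expansion at $B_a$ to fix the remaining phase and to read off $\sigma_{33}(\omega_a)$. By Remark \ref{rmk:holo}(2), in the local parameter $t_a$ one has $x=b_a+t_a^3$ and $y=C_a t_a(1+d_{\ge}(t_a^2))$ with $C_a=\sqrt[3]{(df/dx)|_{b_a}}$, and the $u_3$-component of the Abel image grows like $t_a^2/(2C_a)$. Expanding the sigma side of (\ref{eq:alaKa}) near this point, order by order in $t_a$, and matching it against the explicit algebraic leading behaviour of $A_a/\sqrt[3]{F_a}$ recorded in Proposition \ref{prop:Azero} --- everything normalised through the fixed local expansion of $\sigma$ in Proposition \ref{prop:sigma}(2) --- simultaneously fixes the cube root, giving $K_{a,0}=-1$ and hence $K_{a,c}=-\zeta_3^{c}$ in agreement with (\ref{eq:ala}), and yields $\sigma_{33}(\omega_a)=\sqrt{2}/C_a$; the $\sqrt{2}$ is traceable to the coefficient $\tfrac12$ in $t_a^2/(2C_a)$ after passing to the relevant cube.

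The hard part is precisely this constant- and sign-bookkeeping. The value $\sqrt{2}$ and the choice among the three cube roots are sensitive to the fixed normalisation of $\sigma$ (the constant $c$ of Proposition \ref{prop:sigma}(2)), to the branch of $\sqrt[3]{F_a}$ and the path-dependent integer $\varepsilon_a$, and to the sign produced by the Vandermonde rearrangement of Lemma \ref{lm:-1three} that enters the addition formula and decides whether $\prod_{c}K_{a,c}=+1$ or $-1$. Checking that these choices are made consistently across all four branch points and all three covers $c=0,1,2$ is where the $\hzeta_3$-symmetry and Proposition \ref{lm:omega_a} must be invoked with care.
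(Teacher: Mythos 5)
Your first stage is sound and in fact supplies something the paper leaves implicit: multiplying the three instances of (\ref{eq:alaKa}), using $\sum_c\varphi_{a;c}=0$ and $\prod_c\sigma_{33}(\hzeta_3^c\omega_a)=\sigma_{33}(\omega_a)^3$, and comparing with the addition formula does justify the constraint $K_{a,0}K_{a,1}K_{a,2}=1$ that the paper's proof merely asserts. Note, however, that the sign comes out as $+1$ exactly (since $F_a^3/\prod_i(x_i-b_a)^4=1/F_a$ with no residual sign), and combined with your relation $K_{a,c}=\zeta_3^cK_{a,0}$ this forces $K_{a,0}^3=1$, which cannot produce $K_{a,0}=-1$. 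That tension is already present in the paper itself, whose proof ends with $K_{a,c}=\zeta_3^c$ while the lemma states $-\zeta_3^c$, but your route leans on the product constraint more heavily and so inherits the problem more visibly.

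The genuine gap is in your second stage. You propose to extract both the residual cube root and $\sigma_{33}(\omega_a)$ from an expansion in the local parameter $t_a$ of a single point $P_1\to B_a$. But with $P_2,P_3$ generic this places $u$ at a generic point of $\omega_a+\WW_2$, where $\al_a^{(c)}$ is finite and nonzero (Lemma \ref{lemma:zero_al}(1)) and where neither $\sigma(u)$ nor $\sigma(u+\hzeta_3^c\omega_a)$ admits any closed-form expansion to match against; no order-by-order comparison there can produce $\sigma_{33}(\omega_a)$, which is a second derivative of $\sigma$ at the point $\omega_a$ itself. The paper's actual mechanism is a double degeneration: first $(P_1,P_2)\to(\zeta_3^{c+1}(b_a,0),\zeta_3^{c+2}(b_a,0))$, so that $u\to u^{(3)}-\hzeta_3^c\omega_a$ with $u^{(3)}=w(P_3)\in\WW_1$ and, by Proposition \ref{prop:Azero}(4), $A_a/\sqrt[3]{F_a}\sim t_a\,(b_a-x_3)^{2/3}$; one differentiation in $u_1$ (i.e.\ $C_a^2\,\partial/\partial t_a$) then yields (\ref{eq:K_aproof1}). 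Second, $P_3\to\infty$, so $u^{(3)}\to0$, and two differentiations in $u_3^{(3)}$, anchored by $\sigma_1(0)=1$ and $\sigma_{33}(-\hzeta_3^c\omega_a)=-\zeta_3^{2c}\sigma_{33}(\omega_a)$ from the normalized expansion of Proposition \ref{prop:sigma}(2), give the single relation $K_{a,c}=-\zeta_3^{-2c}\cdot 2/(\sigma_{33}(\omega_a)^2C_a^2)$; only then does the product constraint close the system in the two unknowns. In particular the $\sqrt2$ arises from the second $u_3$-derivative of $t_\infty^2$ at infinity, not from the coefficient $1/2$ in $u_3\sim t_a^2/(2C_a)$ at the branch point. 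Without the two-points-to-$B_a$, one-point-to-$\infty$ degeneration and the two differentiations, your expansion has no anchor and determines neither constant.
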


\begin{proof}
We use the notation $\frac{d f}{dx}|_{x=b_a} = C_a^3$.
In formula (\ref{eq:ala}), we let
 $u \mapsto  - \hzeta_3^c  \omega_a$ by addition
 $ -\hzeta_3^c  \omega_a + u^{(3)}$, where $u^{(3)} := w((x_3, y_3))$.
Using Proposition \ref{prop:Azero} (4),
the first-order approximation corresponds to
 $(P_1, P_2, P_3) \to
(\zeta_3^{c+1} (b_a, 0), \zeta_3^{c+2} (b_a, 0), (x_3, y_3))$.
Remark \ref{rmk:holo} shows that for the differentials,
$$
	\frac{\partial}{\partial u_1}
        = C_a^2
	\frac{\partial}{\partial t_b}, \quad
	\frac{\partial}{\partial u_3^{(3)}}
        = -\frac{\partial}{\partial t_\infty}.
$$
Using the notation in the proof of Proposition \ref{prop:Azero}, we have
 the following:
$$
	\frac{\partial}{\partial u_1}
 \ee^{-{}^t u\varphi_{b;c}}
        \frac{\sigma(u+\hzeta_3^c\omega_a)}
       {\sigma(u)\sigma_{33}(\hzeta_3^c \omega_a) }
 \Bigr|_{u=u^{(3)} -\hzeta_3^c\omega_a} =
 K_{a,c} C_a^2\sqrt[3]{(b_a - x_3)^2},
$$
{\it{i.e.}},
\begin{gather}
 \ee^{- {}^t u^{(3)} \varphi_{b;c}}
\frac{\sigma_{1}(u^{(3)})  }
    {\sigma(u^{(3)} - \hzeta_3^c\omega_a)\sigma_{33}(\hzeta_3^c \omega_a) }
   = K_{a, c} C_a^2 \sqrt[3]{(x_3 - b_a)^2}.
\label{eq:K_aproof1}
\end{gather}
For the computation of the left-hand side, 
we have used that  sigma vanishes on $\Theta_{2}$ but
$\sigma_{1}(u^{(3)})$ does not  vanish identically on $\WW_1$.

Similarly we differentiate the inverse of
(\ref{eq:K_aproof1}) in $t_\infty$ twice 
with respect to $u_3^{(3)}$,
$$
\frac{d^2}{du_3^{(3) 2}}
\left[
 \ee^{ {}^t u^{(3)} \varphi_{a;c}}
\frac{\sigma(u^{(3)} - \hzeta_3^c\omega_a)\sigma_{33}(\hzeta_3^c \omega_a) }
{\sigma_{1}(u^{(3)}) } \right]
=
\frac{d^2}{du_3^{(3) 2}}
\left[ \frac{1}{ K_{a, c} C_a^2} t_\infty^2  + d_{>}(t_\infty^3) \right].
$$
Here we note that $\sigma_\natural(u) =\sigma_{33}(u)$ and
from (\ref{eq:sigmaext}), 
$$
        \sigma_1(0) = 1, \quad
        \sigma_{33}(-\hzeta_3^c\omega_a)=- \zeta_3^{2c}\sigma_{33}(\omega_a). 
$$
Then we have
$$
-\zeta_3^{4c} \frac{\sigma_{33}(\omega_a)^2} {\sigma_{1}(0) }
= 2\frac{1}{ K_{a, c} C_a^2} ,
$$
or 
$$
     K_{a, c} = -\zeta_3^{-2c} \frac{2}{\sigma_{33}(\omega_a)^2C_a^2}.
$$
However $K_{a,c}$ satisfies 
$$
K_{a, 0} K_{a, 1} K_{a, 2} \equiv 1,
$$
and thus 
$$
	\sigma_{33}(\omega_a) = \frac{\sqrt{2}}{C_a},
$$
and $K_{a,c} = \zeta_3^c$.
\end{proof}

\section{A generalized Frobenius' theta formula}

In this section we give a 
generalized Frobenius theta formula, in analogy with 
$\sn^2(u) + \cn^2(u) = 1$, 
$\sn^2(u) + k^2 \dn^2(u) = 1$.

The second of our main theorems  is the following:
\begin{theorem} (A generalized Frobenius theta formula)
\label{th:Frob}
We have 
$$
\sum_{a=1}^4 \frac{\prod_{c=0}^2\al_a^{(c)}}{f'(b_a)} = 1,
$$
or
$$
2\sqrt{2}\sum_{a=1}^4 
\left(
\prod_{c=0}^2\frac{\sigma(u+\hzeta_3^c\omega_a)}
{\sigma(u)} \right)  = 1.
$$
\end{theorem}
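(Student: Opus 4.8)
The plan is to reduce the first identity to a purely rational identity on $\PP^1$, prove it by the residue theorem, and then read off the $\sigma$-form from the addition theorem.

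First I would pass to the algebraic expression of Theorem \ref{th:al}. Since $\al_a^{(c)}(u)=-\zeta_3^{c+\varepsilon_a}A_a/\sqrt[3]{F_a}$ uses the \emph{same} $A_a,F_a$ for all three values of $c$, taking the product over $c=0,1,2$ cubes the cube root and collapses every phase, because $\zeta_3^{0+1+2}=1$ and $\zeta_3^{3\varepsilon_a}=1$; in the normalization of Lemma \ref{lemma:Ka} this yields
$$
\prod_{c=0}^2\al_a^{(c)}(u)=\frac{A_a(P_1,P_2,P_3)^3}{F_a(P_1,P_2,P_3)}.
$$
Thus the first assertion is equivalent to the $u$-free rational identity $\sum_{a=1}^4 A_a^3/\bigl(F_a f'(b_a)\bigr)=1$, which is exactly the sort of elementary Abelian-integral relation alluded to in the Introduction.

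Next I would make the three ingredients explicit as values of polynomials in $x$. The monic function $\mu_3(P)=\mu_3(P;P_1,P_2,P_3)$ of Definition \ref{def:mul} has the shape $\mu_3(x,y)=x^2+c_1 y+c_2 x+c_3$, with $c_1,c_2,c_3$ depending only on $P_1,P_2,P_3$; evaluating at the branch point $B_a=(b_a,0)$ kills the $y$-term, so $A_a=p(b_a)$ with $p(x):=x^2+c_2x+c_3$ a fixed monic quadratic. Likewise $F_a=G(b_a)$ with $G(x):=\prod_{i=1}^3(x-x_i)$ monic of degree $3$, and $f'(b_a)=\prod_{r\neq a}(b_a-b_r)$. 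Hence each summand is a residue,
$$
\frac{A_a^3}{F_a f'(b_a)}=\res_{x=b_a} R(x),\qquad R(x):=\frac{p(x)^3}{G(x)f(x)},
$$
so I can invoke the vanishing of the total residue of $R$ on $\PP^1$.

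Finally I would compute the remaining residues. At each $x_i$ the key input is that $\mu_3(P_i)=0$, which forces $p(x_i)=-c_1 y_i$; cubing and using the curve equation $y_i^3=f(x_i)$ gives $p(x_i)^3=-c_1^3 f(x_i)$, so $\res_{x=x_i}R=-c_1^3/G'(x_i)$, and the standard Lagrange identity $\sum_{i=1}^3 1/G'(x_i)=0$ for a cubic makes these contribute nothing. At infinity $R(x)=1/x+O(x^{-2})$, since $p^3$ is monic of degree $6$ while $Gf$ is monic of degree $7$, whence $\res_\infty R=-1$; therefore $\sum_{a=1}^4\res_{x=b_a}R=1$, proving the first form. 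For the $\sigma$-form I would specialize Theorem \ref{th:addition} to $(n,m)=(3,1)$ with $v=\omega_a$ — the computation already carried out in the proof of Theorem \ref{th:al} — obtaining $\prod_{c=0}^2\sigma(u+\hzeta_3^c\omega_a)/\sigma(u)^3=\sigma_{33}(\omega_a)^3\,A_a^3/F_a$, then insert the value of $\sigma_{33}(\omega_a)^3$ from Lemma \ref{lemma:Ka} and sum. The residue count itself is routine; the genuinely delicate points are the collapse of the $x_i$-residues via $y_i^3=f(x_i)$ together with the identification $A_a=p(b_a)$, and the careful bookkeeping of the normalization constant (the factor $2\sqrt2$, i.e. the precise value of $\sigma_{33}(\omega_a)$ fixed by the chosen constant $c$ in $\sigma$) so that both displayed forms carry the same $1$ on the right.
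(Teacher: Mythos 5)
Your argument is correct, and it differs from the paper's in an instructive way. The paper also proves the first identity by a residue computation, but it works on the curve $X$ itself: it forms the meromorphic differential $K=\bigl(\prod_{c=0}^2\mu((x,\zeta_3^c y);P_1,P_2,P_3)\bigr)dx/\bigl(3f(x)F((x,y))\bigr)$, checks that $K$ is regular at the $P_i$, computes $\res_{\infty}K=-1$ and $\res_{B_a}K=\prod_c\al_a^{(c)}/f'(b_a)$, and concludes from $\oint_\Gamma K=0$. Your version is the push-forward of this to $\PP^1$: since $\mu_3(x,y)=p(x)+c_1y$ with $p$ monic quadratic, one has $\prod_c\mu((x,\zeta_3^cy))=p(x)^3+c_1^3f(x)$, a function of $x$ alone, so the paper's $K$ descends to $\tfrac13\bigl(R(x)+c_1^3/G(x)\bigr)dx$ with your $R=p^3/(Gf)$; the extra term $c_1^3/G$ is exactly what makes each residue at $x_i$ vanish individually, whereas you drop it and instead cancel the residues $-c_1^3/G'(x_i)$ in the aggregate via $\sum_i 1/G'(x_i)=0$. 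Your route is more elementary and makes the mechanism (the identification $A_a=p(b_a)$ and the role of $\mu_3(P_i)=0$) completely explicit, at the cost of redoing bookkeeping the curve-level differential handles automatically. Two small cautions on the reduction step: the product of the prefactors in (\ref{eq:ala}) is $\prod_{c=0}^2(-\zeta_3^{c+\varepsilon_a})=-1$, not $+1$, so read literally that formula gives $\prod_c\al_a^{(c)}=-A_a^3/F_a$; the clean way to get $\prod_c\al_a^{(c)}=A_a^3/F_a$ is the one you also use for the $\sigma$-form, namely Theorem \ref{th:addition} with $(n,m)=(3,1)$ together with $\sum_c\varphi_{a;c}=0$ and $\prod_c\sigma_{33}(\hzeta_3^c\omega_a)=\sigma_{33}(\omega_a)^3$, which should be said explicitly. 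Finally, your last step, inserting $\sigma_{33}(\omega_a)^3=2\sqrt2/f'(b_a)$ from Lemma \ref{lemma:Ka}, actually yields $\sum_a\prod_c\sigma(u+\hzeta_3^c\omega_a)/\sigma(u)^3=2\sqrt2$ rather than $1/(2\sqrt2)$; the discrepancy is in the paper's stated constant, not in your method, but since you flag the $2\sqrt2$ as the delicate point you should carry the computation through and state which side of the equation the constant lands on.
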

This is a generalization of Corollary 7.5 of \cite[Ch.~III]{Mum}
which is a special case  of the Frobenius theta formula.

\begin{corollary} 
The cover of the Jacobian$\JJ^{\sharp}$ is embedded in
$\mathbb{P}^{12}$ as a subspace 
 satisfying the cubic relation,
$$
2\sqrt{2}\sum_{a=1}^4 
\left(
\prod_{c=0}^2\sigma(u+\hzeta_3^c\omega_a) \right)  = \sigma(u)^3.
$$
\end{corollary}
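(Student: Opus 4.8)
The plan is to obtain the corollary as the projective reformulation of Theorem \ref{th:Frob}. First I would simply clear denominators: multiplying the second identity of Theorem \ref{th:Frob},
$$
2\sqrt{2}\sum_{a=1}^4\prod_{c=0}^2\frac{\sigma(u+\hzeta_3^c\omega_a)}{\sigma(u)}=1,
$$
by $\sigma(u)^3$ yields exactly
$$
2\sqrt{2}\sum_{a=1}^4\prod_{c=0}^2\sigma(u+\hzeta_3^c\omega_a)=\sigma(u)^3 .
$$
A priori this holds only off $\Theta_2=\WW_2$, where $\sigma$ is nonzero by Proposition \ref{prop:sigma}(2); but both sides are entire functions on $\CC^3$, so the identity extends across $\Theta_2$ by continuity. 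Every monomial on either side is a product of exactly three $\sigma$-values, so the relation is homogeneous of degree three, which is the asserted cubic.

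Next I would exhibit the coordinates. The $4\times 3=12$ translated functions $\sigma(u+\hzeta_3^c\omega_a)$ together with $\sigma(u)$ form $13$ functions, hence a map to $\PP^{12}$. To promote this to a genuine morphism I would multiply each translate by the exponential factor $e^{-{}^t u\,\varphi_{a;c}}$ already used in the definition of $\al_a^{(c)}$, turning the $13$ functions into holomorphic sections of a single line bundle. This is exactly where $\JJ^{\sharp}=\CC^3/\Lambda^{\sharp}$, $\Lambda^{\sharp}=\bigcap_{a,c}\Lambda^{(a;c)}$, enters: by Proposition \ref{prop:sigma}(3) and Proposition \ref{prop:pperiod} the quasi-periodicity of each corrected translate becomes an honest $\Lambda^{(a;c)}$-periodicity, so on the common refinement $\Lambda^{\sharp}$ all $13$ corrected coordinates descend to sections over $\JJ^{\sharp}$. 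Since $\sum_{c=0}^2\hzeta_3^c\omega_a=0$ by (\ref{eq:Prop2.1}), the prefactors cancel inside each product $\prod_c$, so the cubic of the first step is literally unchanged in the corrected coordinates.

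Finally I would upgrade this morphism to a closed embedding, and this is the step I expect to be the main obstacle. One must check base-point-freeness (the $13$ sections have no common zero on $\JJ^{\sharp}$) together with separation of points and of tangent vectors. My route would be to identify the common line bundle as a power of the pulled-back principal polarization on $\JJ^{\sharp}$; the coordinates are translates of the theta section, and since $\Lambda^{(0)}=3\Lambda$ the relevant bundle on the cover is at least a third power of an ample bundle, so Lefschetz's embedding theorem supplies very ampleness. The genuinely delicate point is to confirm that these $13$ specific translates already span enough of the linear system to separate points and tangents, rather than factoring through a lower-dimensional image; here I would invoke the $\ZZ_3$-symmetry together with the precise zero-and-pole distribution recorded in Lemma \ref{lemma:zero_al}, which shows that the $13$ translated theta divisors share no common component and meet transversally. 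Granting very ampleness, the image is a smooth threefold in $\PP^{12}$ on which the homogeneous cubic of the first step vanishes identically, giving the stated defining relation.
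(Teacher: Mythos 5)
Your first paragraph is, in substance, the entirety of the paper's own argument: the corollary is stated immediately after Theorem \ref{th:Frob} with no separate proof, being obtained exactly as you say by multiplying the second identity of that theorem by $\sigma(u)^3$ and extending across $\Theta_2$ by holomorphy of both sides. So for the part of the statement the paper actually substantiates, your proposal is correct and identical in approach.

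Your second and third paragraphs go beyond what the paper does: the paper asserts the embedding into $\PP^{12}$ without argument (offering only the pointer ``cf.\ \cite[Ch.~III.7]{Mum}''), whereas you try to justify it. Two cautions there. First, your cancellation claim for the prefactors is fine ($\sum_{c}\hzeta_3^c\omega_a=0$ forces $\sum_c\varphi_{a;c}=0$ because the $h^{(c)}_{a,b}$ are the coordinates of $\hzeta_3^c\omega_a$ in the $\RR$-basis $\{\omega_b',\omega_b''\}$), but your identification of the common bundle is looser: $\Lambda^{\sharp}=\bigcap_{a,c}\Lambda^{(a;c)}$ is in general a proper \emph{overlattice} of $\Lambda^{(0)}=3\Lambda$, so the pulled-back polarization on $\JJ^{\sharp}$ need not be the full third power you invoke, and Lefschetz in any case gives very ampleness of the \emph{complete} linear system, not of the span of your $13$ particular translates. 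Second, and as you yourself flag, separation of points and tangents by these specific $13$ sections is exactly the content of the embedding claim, and Lemma \ref{lemma:zero_al} records only the divisors of the $\al_a^{(c)}$, which is not enough to conclude it. So your write-up faithfully reproduces (and correctly proves) what the paper proves, and honestly exposes that the ``embedded in $\PP^{12}$'' clause is an unproved assertion in both your account and the paper's.
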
 

For the proof of the theorem, we introduce
two quantities,
$$
	F(x) = F(P; P_1, P_2, P_3) := (x - x_1) (x - x_2) (x - x_3),
$$
$$
K((x, y)) :=
   \frac{\Bigr(\prod_{c=0}^2\mu((x,\zeta_3^c y); P_1, P_2, P_3)\Bigr) dx }
{3 f(x) F((x,y); P_1, P_2, P_3)} .
$$

\begin{lemma}
$K(P; P_1, P_2, P_3)$ does not vanish
for $P \to P_a$ $(a=1, 2, 3)$.
\end{lemma}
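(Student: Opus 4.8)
The plan is to determine the order of $K$ at $P_a$ by expanding each factor in a local parameter at the generic point $P_a=(x_a,y_a)$. Since $P_a$ is not a branch point, $t:=x-x_a$ is a local parameter, so $dx=dt$ is a nonvanishing holomorphic one-form near $P_a$ and $f(x)=f(x_a)+d_{\ge}(t)$ with $f(x_a)\ne0$. Because $K$ carries the factor $dx$, ``does not vanish'' is to be read as: the meromorphic one-form $K$ tends to a nonzero finite multiple of $dt$ as $P\to P_a$.

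First I would analyze the numerator $\prod_{c=0}^{2}\mu((x,\zeta_3^c y);P_1,P_2,P_3)$. By the properties of $\mu=\mu_3(\,\cdot\,;P_1,P_2,P_3)$ recorded before Lemma \ref{prop:2theta2} (it is monic, has simple zeros exactly at $P_1,P_2,P_3$ and at $Q_1,Q_2,Q_3=[-1]_3(P_1,P_2,P_3)$, and a pole of order $N(3)=6$ at $\infty$), the $c=0$ factor $\mu((x,y);P_1,P_2,P_3)$ has a simple zero at $P=P_a$, i.e. equals $c_0\,t+d_{\ge}(t^2)$ with $c_0\ne0$. The $c=1,2$ factors tend to $\mu((x_a,\zeta_3 y_a);P_1,P_2,P_3)$ and $\mu((x_a,\zeta_3^2 y_a);P_1,P_2,P_3)$; for generic $P_1,P_2,P_3$ the points $(x_a,\zeta_3 y_a)$ and $(x_a,\zeta_3^2 y_a)$ are distinct from the six zeros $P_1,P_2,P_3,Q_1,Q_2,Q_3$ of $\mu$, so both limits are nonzero.

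Next I would treat the denominator $3f(x)F$. Since the $x_i$ are distinct, $F=(x-x_1)(x-x_2)(x-x_3)$ has a simple zero at $P_a$: the factor $x-x_a=t$ carries the zero, while $\prod_{j\ne a}(x_a-x_j)\ne0$. Thus the single zero of the $c=0$ factor in the numerator cancels the single zero of $F$ in the denominator, and all remaining pieces ($dx=dt$, the two nonzero $\mu$-factors, $f(x_a)$, and $\prod_{j\ne a}(x_a-x_j)$) are finite and nonzero. Hence $\lim_{P\to P_a}K(P)$ is a nonzero multiple of $dt$, so $K$ neither vanishes nor has a pole at $P_a$, as claimed.

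The step I expect to be the main obstacle is the nonvanishing of the $c=1,2$ factors, that is, checking that $(x_a,\zeta_3 y_a)$ and $(x_a,\zeta_3^2 y_a)$ avoid the zero set $\{P_1,P_2,P_3,Q_1,Q_2,Q_3\}$ of $\mu$. This is the same genericity hypothesis on $(P_1,P_2,P_3)$ used throughout (cf. the generic assumptions in Proposition \ref{prop:Azero}); nongeneric configurations produce coincidences that would have to be examined separately but do not affect Theorem \ref{th:Frob}, which is established on a dense open set and then extended by continuity.
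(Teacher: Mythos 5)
Your proof is correct and follows essentially the same route as the paper's: a local expansion of numerator and denominator at $P_a$, showing that the simple zero of the $c=0$ factor $\mu((x,y);P_1,P_2,P_3)$ together with the nonvanishing of $dx$ and $f$ cancels the simple zero of $F$. The only differences are cosmetic --- you use the standard local parameter $x-x_a$ where the paper writes its expansions in a cube-root parameter with $t^3=x-x_a$, and you make explicit the genericity needed for the $c=1,2$ factors and for $f(x_a)\neq 0$, which the paper leaves implicit.
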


\begin{proof}
By letting $P \to P_1$ as
$(x, y) = (x_1 + t^3, y_1(1 + h(x_1) t))$,
we have
\begin{gather*}
\begin{split}
\mu(P; P_1, P_2, P_3)
& = t (C + d_{\ge}(t)),
\end{split}
\end{gather*}
whereas
\begin{gather*}
F(P; P_1, P_2, P_3)
 =  t^3 (K_4 + d_{\ge}(t)).
\end{gather*}
\end{proof}

Direct computations provide the following relations:
\begin{lemma}
\label{lm:resK}
$$
	\deg_{P=\infty} K = -1, \quad 
	\res_{P=\infty} K = -1,
$$
$$
	\deg_{P=(b_a, 0)} K = -1, \quad 
	\res_{P=(b_a, 0)} 
K = \frac{\prod_{c=0}^2\al_{a}^{(c)}(w(P_1, P_2, P_3))}{f'(b_a)}.
$$
\end{lemma}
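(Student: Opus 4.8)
The plan is to collapse the product over $c$ so that $K$ becomes an explicit rational $1$-form in $x$ alone, and then to read off the two local expansions.

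First I would expand the numerator of $\mu$ along its last row to write $\mu((x,y);P_1,P_2,P_3)=(P(x)+c_2\,y)/D$, where $P(x)=c_0+c_1x+c_3x^2$ has leading coefficient $c_3=D$, the constant $c_2$ is (up to sign) the Vandermonde determinant of $x_1,x_2,x_3$, and $D$ is the $3\times3$ determinant in the denominator of $\mu$. Since $(x,\zeta_3^cy)=\hzeta_3^c(x,y)$ merely rescales the $y$-term, the identity $\prod_{c=0}^2(A+\zeta_3^cB)=A^3+B^3$ gives
$$
\prod_{c=0}^2\mu((x,\zeta_3^cy);P_1,P_2,P_3)=\frac{P(x)^3+c_2^3\,y^3}{D^3}=\frac{P(x)^3+c_2^3f(x)}{D^3},
$$
so that
$$
K=\frac{P(x)^3+c_2^3f(x)}{3\,D^3\,f(x)\,F(x)}\,dx
$$
is $\hzeta_3$-invariant, i.e.\ a rational differential in $x$. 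This is the one genuine idea; the rest is Laurent expansion.

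At $\infty$ I would take the local parameter with $t_\infty^3=1/x$. Because $P$ is quadratic with leading coefficient $D$, the numerator grows like $D^3x^6$ and the denominator like $3D^3x^7$, so the coefficient of $dx$ is $\tfrac{1}{3x}\bigl(1+O(1/x)\bigr)$. As this coefficient is a rational function of $x$, its expansion uses only integral powers of $1/x$, and only the $\tfrac{1}{3x}$ term can contribute a pole; using $dx=-3t_\infty^{-4}dt_\infty$ this term becomes $-t_\infty^{-1}dt_\infty(1+O(t_\infty))$. Hence $\deg_{\infty}K=-1$ and $\res_{\infty}K=-1$. At $B_a=(b_a,0)$ I would use $t_a$ with $t_a^3=x-b_a$ and $y=C_at_a(1+d_{\ge}(t_a^2))$, $C_a^3=f'(b_a)$, as in Remark~\ref{rmk:holo}. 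Then $f(x)=C_a^3t_a^3(1+d_{\ge}(t_a^3))$, $F(x)=F_a(P_1,P_2,P_3)+O(t_a^3)$ with $F_a\neq0$ for generic $P_i$, and $P(b_a)=A_aD$ since $A_a=\mu(B_a)=P(b_a)/D$. Substituting, together with $dx=3t_a^2dt_a$, the leading numerator $A_a^3D^3$ yields
$$
K=\frac{A_a^3}{C_a^3F_a}\,\frac{dt_a}{t_a}\,\bigl(1+O(t_a)\bigr),
$$
so $\deg_{B_a}K=-1$ and $\res_{B_a}K=A_a^3/(f'(b_a)F_a)$. To finish I would invoke Theorem~\ref{th:al}: forming the product of $\al_a^{(0)},\al_a^{(1)},\al_a^{(2)}$, the accumulated $\zeta_3$- and sign-factors collapse to a unit and the three cube roots multiply to $F_a$, giving $\prod_{c=0}^2\al_a^{(c)}=A_a^3/F_a$; hence $\res_{B_a}K=\prod_{c=0}^2\al_a^{(c)}/f'(b_a)$, as claimed.

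The only delicate point—and the step I would treat most carefully—is this last phase bookkeeping: one must check that $\prod_c(-\zeta_3^{c+\varepsilon_a})$ and the product of the three branches $\sqrt[3]{F_a}$ recombine to exactly $A_a^3/F_a$ with the correct overall sign, which forces use of the branch convention for the cube root over $S^3\Gamma_\infty X$ and of the integer $\varepsilon_a$ fixed in Theorem~\ref{th:al} and Lemma~\ref{lemma:Ka}. The remaining routine verifications are that $c_3=D\neq0$ for generic $P_1,P_2,P_3$ (so that the coefficient at $\infty$ is exactly $\tfrac13$) and that $F_a\neq0$ (so the pole at $B_a$ is genuinely simple).
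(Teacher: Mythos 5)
Your computation is correct and is, as far as one can tell, exactly the ``direct computation'' that the paper invokes for this lemma without writing it out: the paper's proof consists of the single sentence ``Direct computations provide the following relations,'' so your reduction $\prod_{c=0}^2\mu((x,\zeta_3^cy))=(P(x)^3+c_2^3f(x))/D^3$ via $\prod_c(A+\zeta_3^cB)=A^3+B^3$, which turns $K$ into a rational $1$-form in $x$ alone, is the missing content. The local expansions are right: at $\infty$ the form is $\tfrac{dx}{3x}(1+O(1/x))=-\tfrac{dt_\infty}{t_\infty}(1+O(t_\infty^3))$, and at $B_a$, using $P(b_a)=A_aD$, $f(x)=C_a^3t_a^3(1+\cdots)$ and $dx=3t_a^2dt_a$, one gets $\res_{B_a}K=A_a^3/(f'(b_a)F_a)$. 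This is also the value that makes Theorem \ref{th:Frob} come out right, since $N(x)=P(x)^3/D^3+c_2^3f(x)/D^3$ is divisible by $F(x)$ with monic cubic quotient $M$, and $\sum_aM(b_a)/f'(b_a)=1$ by Lagrange interpolation.

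The one place where your argument does not close cleanly is the step you yourself flagged: converting $A_a^3/F_a$ into $\prod_{c=0}^2\al_a^{(c)}$. Taking the paper's formula (\ref{eq:ala}) at face value, the product of the prefactors is $\prod_{c=0}^2\bigl(-\zeta_3^{c+\varepsilon_a}\bigr)=(-1)^3\zeta_3^{3+3\varepsilon_a}=-1$, not $+1$, so one obtains $\prod_c\al_a^{(c)}=-A_a^3/F_a$ and hence $\res_{B_a}K=-\prod_c\al_a^{(c)}/f'(b_a)$, off by a sign from the statement of Lemma \ref{lm:resK}. This discrepancy is not introduced by your computation --- the residue $A_a^3/(f'(b_a)F_a)$ is unambiguous --- but lies in the paper's own normalization of $\al_a^{(c)}$ (note that Lemma \ref{lemma:Ka} states $K_{a,c}=-\zeta_3^c$ while its proof concludes $K_{a,c}=\zeta_3^c$, so the overall sign convention in the paper is already internally inconsistent). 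You should therefore not assert that the sign factors ``collapse to a unit'' equal to $1$; either record the product as $-A_a^3/F_a$ and track the sign through to Theorem \ref{th:Frob}, or note explicitly that the lemma holds with the normalization $\al_a^{(c)}=+\zeta_3^{c+\varepsilon_a}A_a/\sqrt[3]{F_a}$. Everything else --- the genericity of $D\neq0$ and $F_a\neq0$, and the observation that only the $\tfrac{1}{3x}$ term at infinity can contribute a pole because the coefficient is rational in $x$ --- is handled correctly.
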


\subsection*{Proof of Theorem \ref{th:Frob}}
Integrating over the sides of the polygon 
representation of $X$ gives:
$$
	\oint_\Gamma K = 0.
$$
Lemma \ref{lm:resK} provides the first relation
in Theorem \ref{th:Frob}. From Theorem 
\ref{th:al}, we have the relation of $\sigma$-functions.
\qed.

\section{Domain of the $\mathrm{al}$-function} \label{sec:Domain}


We give a domain to the trigonal
$\al_a^{(0)}$-function, in analogy to the fact that
 the hyperelliptic  $\al_a$ function is related to 
a Prym variety (see Appendix).
In fact, $\sqrt[3]{F_a}$ is a function on $S^3 \Gamma_\infty X$,
and more precisely  the domain of $\al_a^{(c)}$ is contained 
the preimage of $\JJ^{(a; c)}$ under an extended Abel map. 
We will also regard it as a subset of
 $S^3 \hat X$ for a suitable covering $\varpi:\hat X \to X$.
In this section, we construct $\hat X$.

When we consider the preimage of $\JJ^{(a; c)}$, 
we use a parameterization $z = \sqrt[3]{x - b_a}$.
As in the standard construction
of a Galois cover of a curve with $\mathbb{Z}_p$ action at a
given point $P$, cf. \cite{cornalba}, namely by normalizing the
curve obtained by taking the inverse image of the 1-section
under $\mathcal{L}\rightarrow\mathcal{L}^p$,
in the total space of a line bundle $\mathcal{L}$ such that
$\mathcal{L}^p=\mathcal{O}(P)$,
we consider
 a   
curve $\hat X$ whose affine representation can be given by
\begin{equation}
	w^3 = \prod_{i=1, \neq r}^{4} ( z^3 - a_i),
\label{w2:g}
\end{equation}
where $a_i = b_i - b_a$, $z = \sqrt[3]{x - b_a}$.
The birational change of variables  $w = y/z$
 transforms the plane curve $X$ to a
plane curve\footnote{ More precisely, we consider an extended ring
$R':=R[z]/(z^3-x+b_a)$ but since $\mathrm{Spec}(R')$ is
a singular curve, we normalize it to
$\hat R=\CC[w,z]/(w^3-\prod(z^3-a_i))$.
$\hat X$ is the projectivization of $\mathrm{Spec}(\hat R)$.
As in  (\ref{eq:zetaXinf}), we extend the $\ZZ_3$-action
consistently with  the Galois action on  $R$.}

$\hat X$ with 
the same $\ZZ_3$-action on $y$ and $z$.
If we adjoin the function $\chi=\sqrt{x-b_a}$
to the field of
meromorphic functions of $X$, the normalized curve is a space
curve $(3,8,13)$ \cite{KMP}, determined, e.g.,  by equations
 $y^3=\chi^2 \prod_{i=1, \neq r}^{4} ( \chi^2 - a'_i)$,
$\eta^3=\chi \prod_{i=1, \neq r}^{4} ( \chi^2 - a'_i)^2$.


On the other hand, for $c_i^3 := a_i$,
let $(\zeta_3^d c_i, 0) \in \hat X$, $(d = 0, 1, 2)$,
be 
a finite branch point $\hat B_i^{(d)}$  
and $(\zeta_3^d z, w) \in \hat X$, $(d = 0, 1, 2)$
a generic point $\hat P^{(d)}$.

There is an automorphism 
$$
\iota_{\zeta_3}: \hat X \to \hat X,  \quad((z, w) \mapsto (\zeta_3 z, w)),
$$
whereas there are
 the trigonal automorphisms
$$
\hzeta_3: \hat X \to \hat X\quad\mbox{and}\quad
\hzeta_3:  X \to X, 
\quad ((z, w) \mapsto (z, \zeta_3 w), \quad (x, y) \mapsto (x, \zeta_3 y)).
$$


These automorphisms generate distinct subgroups except at infinity.  


The  point at infinity of $\hat X$ is resolved into three points
$\infty^{(c)}$, $(c = 0, 1, 2)$.
At each $\infty^{(c)}$ of $\hat X$, 
$\hzeta_3$ and  $\iota_{\zeta_3}$ are identified, {\it i.e.},
\begin{equation}
\iota_{\zeta_3} : \infty^{(c)} \mapsto \infty^{(c+1\ \mbox{\tiny{mod}}\ 3)},
 \quad
\hzeta_3 :  \infty^{(c)} \mapsto  \infty^{(c+1\ \mbox{\tiny{mod}}\ 3)}.
\label{eq:zetaXinf}
\end{equation}
Also, $(0, 0) \in \hat X$ which corresponds to
$B_a \in  X$ is the fixed point of $\iota_{\zeta_3}$ and  $\hzeta_3$.

Then as illustrated in Figure 4, there is a {\it{trigonal covering}}:
$$
     \varpi: \hat X \to X, \quad 
      (\hat P=(z, w) \mapsto P = (z^3 + b_a, w z)),
$$
and 
$$
(0,0) \mapsto B_a, \quad
\hat B_i^{(d)} \mapsto B_i, \quad (i \in \{ 1, 2, 3, 4\} \setminus \{a\}).
$$

Henceforth we consider the $a = 1$ case without
loss of generality.
 From Figure 1 and Remark \ref{rmk:w''}, we have
$$
\Lambda_3^{(a; 0)} = 2 \ZZ \omega_1' + 6 \ZZ \omega_1''
+ 6 \ZZ \omega_2' + 2 \ZZ \omega_2''
+ 6 \ZZ \omega_3' + 2 \ZZ \omega_3''.
$$
Consistent with the covering map, the actions on 
$H_1(\hat X, \ZZ)$ are extended to $H_1(\hat X, \ZZ[\zeta_3])$.
Thus we have a natural lift of
the homology basis
$(\hat\alpha_1, \hat\beta_1, \hat\alpha_2^{(c)}, \hat\beta_2^{(c)},
\hat\alpha_3^{(c)}, \hat\beta_3^{(c)})$ 
$\in H_1(\hat X, \ZZ[\zeta_3])$
as  shown in Figure 3.
Here we have used the fact that
the actions of $\iota_{\zeta_3}$ and $\hzeta_3$ are exchanged at infinity 
due to the properties (\ref{eq:zetaXinf}). 
\begin{figure}
\begin{center}
\includegraphics[scale=0.5]{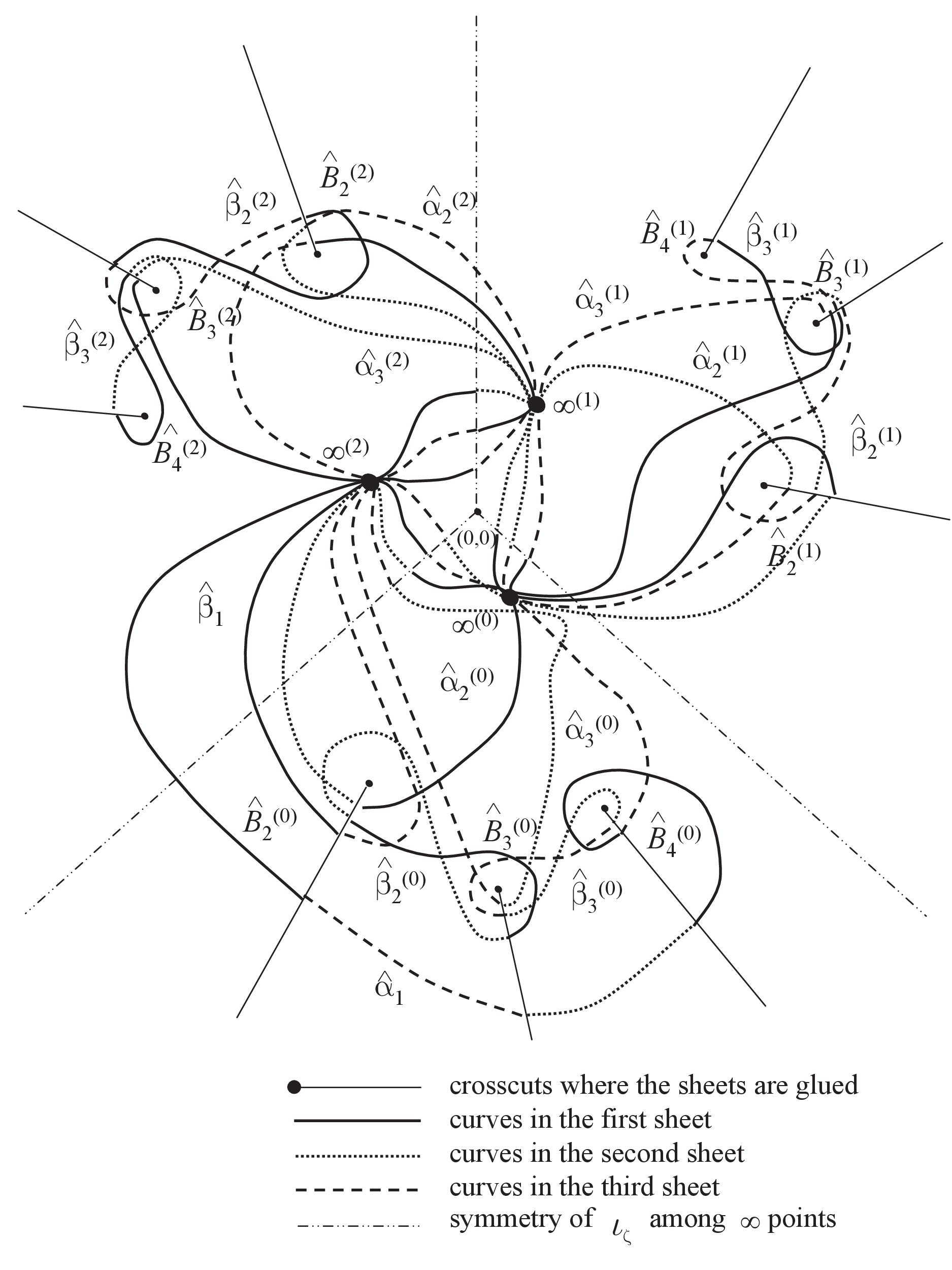}
\caption{Homology basis of $\hat X$}
\label{fig:3}
\end{center}
\end{figure}
\begin{figure}
\begin{center}
\includegraphics[scale=0.5]{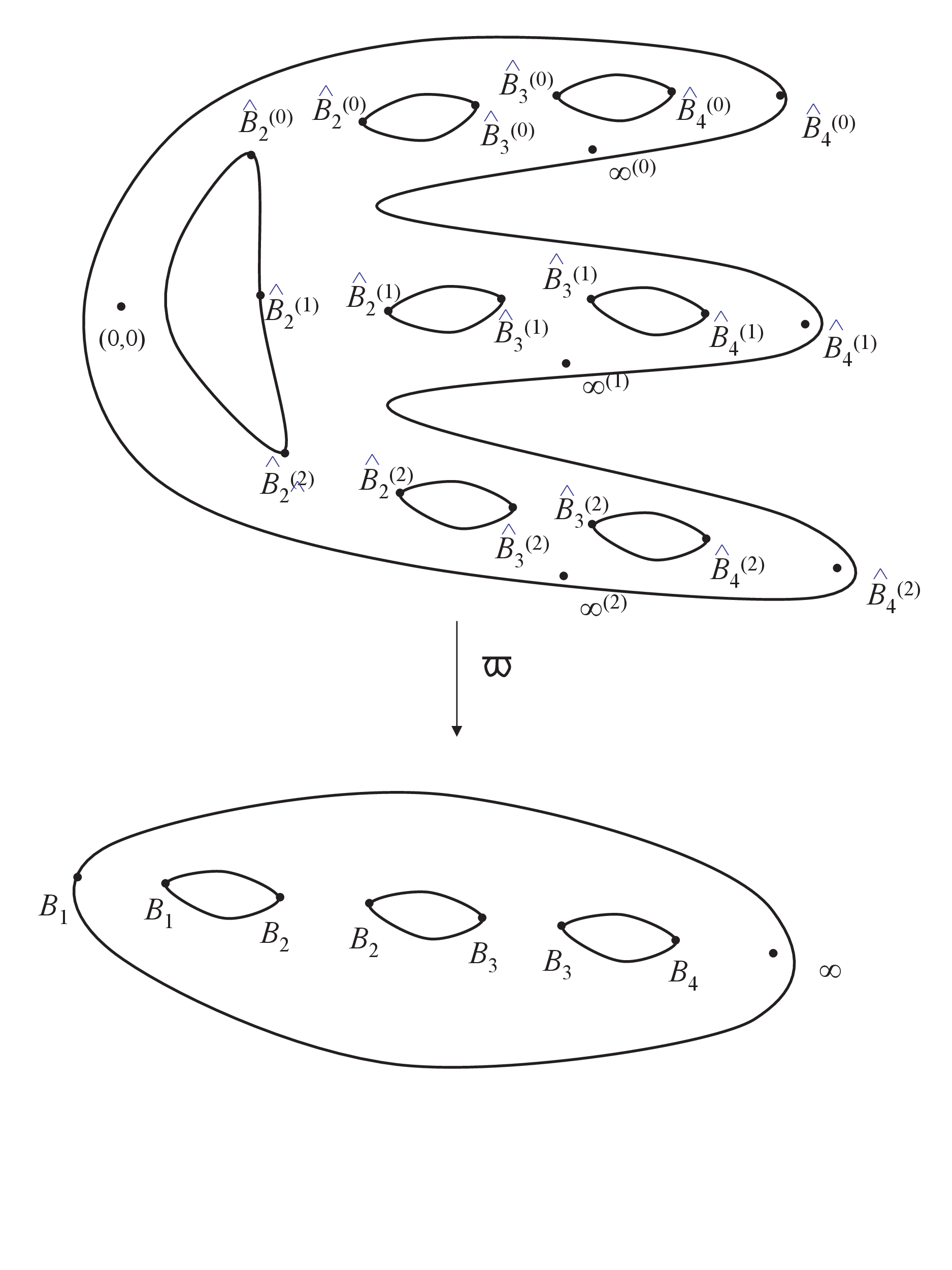}
\caption{$\varpi: \hat X \to X$}
\label{fig:4}
\end{center}
\end{figure}


Up to homotopy, $\hat \alpha_1$ equals
$\varpi \hat \alpha_1$;
similarly $\varpi \hat \beta_1 = \beta_1$.
On the other hand for $i = 2, 3$, Figure 3 shows that
\begin{eqnarray}
      &  \varpi(\alpha_i^{(c)})  = \alpha_i, \quad
        \varpi(\beta_i^{(c)})  = \beta_i, 
         \label{eq:gamma^(c)} \\
      &  \varpi^{-1}(\alpha_i) 
  =\hat \alpha_i^{(0)} + \hat \alpha_i^{(1)} + \hat \alpha_i^{(2)},
 \quad \varpi^{-1}(\beta_i) 
  =(\hat \beta_i^{(0)}, \hat \beta_i^{(1)}, \hat \beta_i^{(2)}).
  \nonumber 
\end{eqnarray}

Let the Jacobian associated with $\hat X$ be denoted by
$\hat {\JJ}_7$ and a basis of holomorphic one-forms be
 given by\footnote{
For a curve of genus 7 $X_7$ whose affine part is given by:
$w^3 = \prod_{i=0}^8(z-\hat c_i)$, 
letting
$$
z' := \frac{1}{z - c_0}, \quad
w':=\frac{1}{\sqrt[3]{\prod_{i=1}^8(c_i - c_0)}}
     \frac{w}{(z-c_0)^3},\quad
c_i' := \frac{1}{c_i - c_0}; \quad
$$
as another affine chart, cf.  \cite[Ch.~IIIa]{Mum} and \cite[Appendix]{KMP},
we have
$ {w'}^3 = \prod_{i=1}^8(z'-c'_i)$.
The holomorphic one-forms are given by
$$
     \hat \nuI'_1 = \frac{dz'}{3 {w'}^2}  , \quad
     \hat \nuI'_2 = \frac{z' dz'}{3 {w'}^2}, \quad
     \hat \nuI'_3 = \frac{{z'}^2 dz'}{3 {w'}^2}, \quad
     \hat \nuI'_4 = \frac{dz'}{3 w}, \quad
     \hat \nuI'_5 = \frac{{z'}^3 dz'}{3 {w'}^2}, \quad
     \hat \nuI'_6 = \frac{z'w' dz'}{3 {w'}^2}, \quad
     \hat \nuI'_7 = \frac{{z'}^4 dz'}{3 {w'}^2}. \quad
$$
We state that when $c_0 =0$, $\hat \nuI_i =-3 \hat \nuI'_{8-i}$
denoting $d z' = - dz / z^2$.
}
$$
     \hat \nuI_1 = \frac{dz}{w^2}, \quad
     \hat \nuI_2 = \frac{z dz}{w^2}, \quad
     \hat \nuI_3 = \frac{z^2 dz}{w^2}, \quad
     \hat \nuI_4 = \frac{dz}{w}, \quad
$$
$$
     \hat \nuI_5 = \frac{z^3 dz}{w^2}, \quad
     \hat \nuI_6 = \frac{zw dz}{w^2}, \quad
     \hat \nuI_7 = \frac{z^4 dz}{w^2}. \quad
$$
We have removed the factor $3$ in the denominators for
later convenience.
The Abel map 
$\hat w:\Gamma_{\infty^{(0)}} \hat X \to \CC^7$
defined by these holomorphic one forms is denoted by
$$
	\hat w(\Gamma_{P_1,{\infty^{(0)}}}, \ldots, \Gamma_{P_n,{\infty^{(0)}}}) 
= \sum_{i=1}^n \hat w(\Gamma_{P_i,{\infty^{(0)}}}),
        \quad
        \hat w(\Gamma_{P_i,{\infty^{(0)}}}) = \int^{\Gamma_{P_i,{\infty^{(0)}}}}\hat \nuI.
$$

Since $d x = 3 z^2 dz$,
we have the relation
$$
     \hat \nuI_1 =  \varpi^*\nuI_1, \quad
     \hat \nuI_5 = \varpi^*\nuI_2, \quad
     \hat \nuI_6 =  \varpi^*\nuI_3, \quad
     \varpi^*(\nuI) =  
     \begin{pmatrix} 
     \hat \nuI_1\\
     \hat \nuI_5\\
     \hat \nuI_6\\
     \end{pmatrix}.
$$

However we should note the $\hzeta_3$-action
 according to the covering defined by $z$:


\begin{lemma} \label{lm:10a}
The maps $\iota_{\zeta_3}$ and $\hzeta_3$ induce
$$
     \iota_{\zeta_3}\varpi^*(\nuI) = 
     \varpi^*(\hat\zeta_3\nuI) = 
     \hat\zeta_3\varpi^*(\nuI).
$$
\end{lemma}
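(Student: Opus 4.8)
The plan is to prove both equalities by an explicit computation in the affine coordinates $(z,w)$ of $\hat X$, using the defining formula $\varpi(z,w)=(z^3+b_a,\,wz)$ together with the fact, recorded just above, that the three pulled-back forms are $\varpi^{*}\nuI=(\hat\nuI_1,\hat\nuI_5,\hat\nuI_6)^{t}=(dz/w^2,\ z^3\,dz/w^2,\ z\,dz/w)^{t}$, and that $\hzeta_3$ acts on the forms of $X$ diagonally by $\hzeta_3\nuI=(\zeta_3\nuI_1,\zeta_3\nuI_2,\zeta_3^2\nuI_3)^{t}$. Throughout I use the convention, consistent with Section \ref{curves}, that the induced action of an automorphism on forms is its pullback; one checks this reproduces $\hzeta_3\nuI_1=\zeta_3\nuI_1$.

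For the second equality $\varpi^{*}(\hzeta_3\nuI)=\hzeta_3\,\varpi^{*}(\nuI)$ I would argue by functoriality of pullback. First I verify on points that $\varpi$ intertwines the two trigonal automorphisms: since $\varpi(\hzeta_3(z,w))=\varpi(z,\zeta_3 w)=(z^3+b_a,\zeta_3 wz)$ while $\hzeta_3(\varpi(z,w))=\hzeta_3(z^3+b_a,wz)=(z^3+b_a,\zeta_3 wz)$, we have $\varpi\circ\hzeta_3=\hzeta_3\circ\varpi$ as maps $\hat X\to X$. Passing to pullbacks reverses the order and gives $\hzeta_3^{*}\circ\varpi^{*}=\varpi^{*}\circ\hzeta_3^{*}$ on one-forms, which is precisely the asserted identity. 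Only the affine relation is needed here; its compatibility at the three points $\infty^{(c)}$ is furnished by (\ref{eq:zetaXinf}).

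For the first equality $\iota_{\zeta_3}\varpi^{*}(\nuI)=\varpi^{*}(\hzeta_3\nuI)$ I would compute the action of $\iota_{\zeta_3}\colon(z,w)\mapsto(\zeta_3 z,w)$ directly on the three forms. Since $\iota_{\zeta_3}$ fixes $w$, sends $z^3$ to $z^3$, and scales $dz$ by $\zeta_3$, one obtains $\iota_{\zeta_3}\hat\nuI_1=\zeta_3\hat\nuI_1$, $\iota_{\zeta_3}\hat\nuI_5=\zeta_3\hat\nuI_5$, and $\iota_{\zeta_3}\hat\nuI_6=\zeta_3^2\hat\nuI_6$. Comparing with $\varpi^{*}(\hzeta_3\nuI)=(\zeta_3\hat\nuI_1,\zeta_3\hat\nuI_5,\zeta_3^2\hat\nuI_6)^{t}$ yields the claim; the same three scalars also arise from $\hzeta_3\colon(z,w)\mapsto(z,\zeta_3 w)$, which scales $1/w^2$ by $\zeta_3$ and $1/w$ by $\zeta_3^2$, so all three expressions in the lemma coincide. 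I note in passing that the computation is robust to the harmless slippage in the identification $\hat\nuI_5=\varpi^{*}\nuI_2$ (strictly $\varpi^{*}\nuI_2=\hat\nuI_5+b_a\hat\nuI_1$), because $\hat\nuI_1$ and $\hat\nuI_5$ transform by the same scalar under both automorphisms.

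The only genuinely delicate point is conceptual rather than computational: $\iota_{\zeta_3}$ and $\hzeta_3$ are \emph{distinct} automorphisms of $\hat X$ — they generate different subgroups away from infinity, as noted above — so the first equality is not a formal consequence of the second. What makes it hold is that each of the forms $\varpi^{*}\nuI$ has its $z$-weight and $w$-weight arranged so that the scaling $z\mapsto\zeta_3 z$ and the scaling $w\mapsto\zeta_3 w$ produce the identical scalar factor. I would therefore take care to present this as a statement about the specific pulled-back forms $\varpi^{*}\nuI$ only, and not about all of $H^0(\hat X,K_{\hat X})$, where the two induced actions genuinely differ.
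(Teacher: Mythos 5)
Your proof is correct, and since the paper states Lemma \ref{lm:10a} without any proof, your direct coordinate computation (checking that $z\mapsto\zeta_3 z$ and $w\mapsto\zeta_3 w$ each multiply $\varpi^*\nuI_1,\varpi^*\nuI_2,\varpi^*\nuI_3$ by $\zeta_3,\zeta_3,\zeta_3^2$ respectively, together with the functoriality identity $\varpi\circ\hzeta_3=\hzeta_3\circ\varpi$) supplies exactly the verification the paper leaves implicit. Your observation that $\varpi^*\nuI_2=\hat\nuI_5+b_a\hat\nuI_1$ rather than $\hat\nuI_5$, and that this does not affect the eigenvalues, is a worthwhile correction to record.
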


We consider the projections of the extended Abel maps,
$$
   \hat w(\Gamma_{\hat P,\infty^{(0)}})|_{\varpi^* \nuI} 
= \int_{\Gamma_{\hat P,\infty^{(0)}}} \varpi^*\nuI,
$$
and  we have the Lemma:

\begin{lemma} \label{lm:10b}
$$
 \int_{\gamma} \nu^I
	=\frac{1}{3} \int_{\hat \gamma} \varpi^*\nuI, \quad
   \hat\gamma = \varpi^{-1}  \gamma,\quad
     \gamma = \beta_1, \alpha_i, (i = 2, 3),
$$
$$
 \int_{\gamma} \nu^I = \int_{\hat \gamma} \varpi^*\nuI, \quad
   \gamma = \varpi \hat \gamma,\quad
    \hat \gamma = \alpha_1, \alpha_i^{(c)}, \beta_i^{(c)},
   (i = 2, 3, c = 0, 1, 2).
$$
\end{lemma}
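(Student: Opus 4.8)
The plan is to reduce both identities to the single functoriality (projection) formula $\int_{\hat c}\varpi^*\omega=\int_{\varpi_*\hat c}\omega$, valid for any $1$-chain $\hat c$ on $\hat X$ and any holomorphic $1$-form $\omega$ on $X$, and then to read off the pushforward $\varpi_*\hat\gamma$ in each case directly from the incidence data (\ref{eq:gamma^(c)}) and Figure \ref{fig:3}. Because $\varpi^*\nuI={}^t(\hat\nuI_1,\hat\nuI_5,\hat\nuI_6)$ with $\hat\nuI_1=\varpi^*\nuI_1$, $\hat\nuI_5=\varpi^*\nuI_2$, $\hat\nuI_6=\varpi^*\nuI_3$, each displayed equality is a vector identity that holds componentwise, so it suffices to argue for one scalar form $\omega=\nuI_j$.

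First I would dispose of the unit-factor identities. The relations $\varpi_*\hat\alpha_1=\alpha_1$, $\varpi(\alpha_i^{(c)})=\alpha_i$ and $\varpi(\beta_i^{(c)})=\beta_i$ $(i=2,3)$ of (\ref{eq:gamma^(c)}) say precisely that each of the lifts $\hat\alpha_1,\hat\alpha_i^{(c)},\hat\beta_i^{(c)}$ maps by degree one onto its image $\gamma=\varpi\hat\gamma$, i.e. $\varpi_*\hat\gamma=\gamma$. The projection formula then gives $\int_{\hat\gamma}\varpi^*\nuI=\int_\gamma\nuI$ at once, which is the second display.

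Next I would treat the factor $\tfrac13$ identities. For $\gamma=\alpha_i$ $(i=2,3)$ the splitting $\varpi^{-1}(\alpha_i)=\hat\alpha_i^{(0)}+\hat\alpha_i^{(1)}+\hat\alpha_i^{(2)}$ of (\ref{eq:gamma^(c)}), together with $\varpi_*\hat\alpha_i^{(c)}=\alpha_i$, yields $\varpi_*\varpi^{-1}(\alpha_i)=3\alpha_i$, whence $\int_{\varpi^{-1}(\alpha_i)}\varpi^*\nuI=3\int_{\alpha_i}\nuI$. For $\gamma=\beta_1$ the cover is nonsplit: $\varpi^{-1}(\beta_1)=\hat\beta_1$ is a single connected loop triple-covering $\beta_1$, so $\varpi_*\hat\beta_1=3\beta_1$ and again $\int_{\hat\beta_1}\varpi^*\nuI=3\int_{\beta_1}\nuI$. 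Dividing by $3$ gives the first display. Conceptually, in the split cases the three sheets are the orbit of one lift under the genuine deck transformation $g\colon(z,w)\mapsto(\zeta_3 z,\zeta_3^{-1}w)$, and since $\varpi\circ g=\varpi$ the form $\varpi^*\nuI$ is $g$-invariant, so each sheet contributes the same $\int_\gamma\nuI$.

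The main obstacle is not analytic but combinatorial-topological: correctly pinning down the degree of each pushforward near the ramification, which is exactly the $\ZZ_3$-monodromy of the cover encoded in Figures \ref{fig:1} and \ref{fig:3}. The crucial asymmetry — $\alpha_1$ carries a unit factor while its dual $\beta_1$ carries $\tfrac13$ — reflects that the $3$-torsion class $[B_1-\infty]\in\JJ$ defining $\varpi$ has trivial monodromy along $\alpha_1,\alpha_2,\alpha_3,\beta_2,\beta_3$ but nontrivial monodromy along $\beta_1$, so that only $\beta_1$ fails to split. The attendant pitfall, which I would be careful to avoid, is that $\iota_{\zeta_3}$ is \emph{not} a deck transformation of $\varpi$: by Lemma \ref{lm:10a}, $\iota_{\zeta_3}^*$ scales the components of $\varpi^*\nuI$ by $\zeta_3,\zeta_3,\zeta_3^2$, so summing over an $\iota_{\zeta_3}$-orbit would annihilate the integral rather than triple it. It is the identification (\ref{eq:zetaXinf}) of $\iota_{\zeta_3}$ with $\hzeta_3$ at each $\infty^{(c)}$ that makes the labelled lifts of Figure \ref{fig:3} the genuine $g$-sheets, with each $\varpi_*\hat\gamma^{(c)}=\gamma$ of degree one; once these incidences are confirmed the two formulas are immediate.
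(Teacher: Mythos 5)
Your argument is correct, and it reaches both displays by a route that is organized differently from the paper's. The paper proceeds by explicit phase bookkeeping: it decomposes $\hat\beta_1$ into six arcs through the fibre over $B_1$, pushes each arc down to a $\hzeta_3$-translate of $\beta_1$, and uses the local relation $\varpi^*\nuI|_{U^{(c)}}=\iota_{\zeta_3}^c\,\varpi^*\nuI|_{U^{(0)}}$ together with $\zeta_3+\zeta_3^2=-1$ to show that the $\zeta_3$-factors picked up by the displaced paths and by the form cancel, leaving $3\int_{\beta_1}\nuI$; the $\alpha_i$ case is disposed of by the same cancellation remark. You instead note that $\varpi$ is an unramified degree-$3$ cover whose actual deck transformation is $(z,w)\mapsto(\zeta_3 z,\zeta_3^{-1}w)$ rather than $\iota_{\zeta_3}$, so that $\varpi^*\nuI$ is deck-invariant and everything collapses to the projection formula $\int_{\hat c}\varpi^*\omega=\int_{\varpi_*\hat c}\omega$, with the pushforwards read off from the incidence relations displayed just before the lemma and Figure \ref{fig:3} (labelled lifts push forward with degree one, full preimages with total degree three). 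What your version buys is robustness: it needs only the incidence data, not the detailed arc decomposition, and your observation that summing over an $\iota_{\zeta_3}$-orbit would annihilate the integral pinpoints exactly the subtlety the paper's phase computation is there to compensate. One caveat: your closing remark about which cycles the class $[B_1-\infty]$ has trivial monodromy along is not needed and is probably not accurate for $\alpha_2,\alpha_3$ (the structure of $\Lambda^{(1;0)}$, with $6\ZZ\omega_2'$ and $6\ZZ\omega_3'$, indicates nontrivial monodromy there, so the $\hat\alpha_i^{(c)}$ are degree-one arcs of a connected preimage rather than three closed loops); but since the projection formula gives $\int_{\varpi^{-1}\gamma}\varpi^*\omega=3\int_\gamma\omega$ whether or not the preimage splits, and a degree-one sheet integrates to $\int_\gamma\omega$ whether or not it closes up, nothing in your proof depends on this.
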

\begin{proof}
For a neighborhood $U$ of a point in $\alpha_i$, 
we have $U^{(c)}$ $(c=0, 1, 2)$ such that $U = \varpi U^{(c)}$.
Then $\varpi^* \nuI|_{U^{(c)}} = \iota_{\zeta_3}^c \varpi^* \nuI|_{U^{(0)}}$
and they cancel the phase difference between 
$\alpha_i^{(c)}$ and $\alpha_i^{(0)}$.
Then we have
\begin{gather*}
\begin{split}
\int_{\hat\beta_1} \varpi^*\nuI
&=\left(\int_{\hat B_2^{(0)}}^{(0,0)}+\int_{(0,0)}^{\hat B_2^{(1)}}
+\int_{\hat B_2^{(1)}}^{(0,0)}+\int_{(0,0)}^{\hat B_2^{(2)}}
+\int_{\hat B_2^{(2)}}^{(0,0)}+\int_{(0,0)}^{\hat B_2^{(0)}}\right)
\varpi^*\nuI \\ 
&=\left(
\int_{\beta_1}
+\hzeta_3^2\int_{\hzeta_3\beta_1}
+\hzeta_3\int_{\hzeta_3^2\beta_1} \right) \nuI  =3 \int_{\beta_1} \nuI. \\
\end{split}
\end{gather*}
Here we used the fact that $\zeta_3 + \zeta_3^2 = -1$. Similarly
\begin{gather*}
\begin{split}
\int_{\varpi^{-1}\alpha_i} \varpi^*\nuI
&= \int_{\alpha_i^{(0)} + \alpha_i^{(1)} + \alpha_i^{(2)}} \varpi^*\nuI 
 =3 \int_{\alpha_i} \nuI. \\
\end{split}
\end{gather*}
\end{proof}

Then we have the period matrices for $\hat X$,
$$
  (\hat \omega') := 
   \frac{1}{2}(
\int_{\hat \alpha_1}  \hat \nuI , 
\int_{\hat \alpha_2^{(0)}}  \hat \nuI , 
\int_{\hat \alpha_2^{(1)}}  \hat \nuI , 
\int_{\hat \alpha_2^{(2)}}  \hat \nuI , 
\int_{\hat \alpha_3^{(0)}}  \hat \nuI , 
\int_{\hat \alpha_3^{(1)}}  \hat \nuI , 
\int_{\hat \alpha_3^{(2)}}  \hat \nuI ),
$$
$$
  (\hat \omega'') := 
   \frac{1}{2}(
\int_{\hat \beta_1}  \hat \nuI , 
\int_{\hat \beta_2^{(0)}}  \hat \nuI , 
\int_{\hat \beta_2^{(1)}}  \hat \nuI , 
\int_{\hat \beta_2^{(2)}}  \hat \nuI , 
\int_{\hat \beta_3^{(0)}}  \hat \nuI , 
\int_{\hat \beta_3^{(1)}}  \hat \nuI , 
\int_{\hat \beta_3^{(2)}}  \hat \nuI ).
$$
Using these, we have the lattice $\hat \Lambda$ in $\CC^7$ and
 the Jacobian $\hat \JJ_7$  is:
$\hat \JJ_7 = \CC^7/ \hat\Lambda$.

Lemmas \ref{lm:10a} and \ref{lm:10b} give the following
proposition:

\begin{proposition}
$$
 (\hat\omega', \hat\omega'')|_{\varpi^* \nuI}
  =(\omega_1',
  \omega_2',
  \omega_2',
  \omega_2',
  \omega_3',
  \omega_3',
  \omega_3',
  3\omega_1'',
  \omega_2'',
  \omega_2'',
  \omega_2'',
  \omega_3'',
  \omega_3'',
  \omega_3'').
$$
\end{proposition}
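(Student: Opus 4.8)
The plan is to read the identity off column by column. Restricting the $7\times 14$ half-period matrix $(\hat\omega',\hat\omega'')$ of $\hat X$ to $\varpi^*\nuI=(\hat\nuI_1,\hat\nuI_5,\hat\nuI_6)^t$ just keeps the three components cut out by these pulled-back forms, so what must be computed is the fourteen $3$-vectors $\frac12\int_{\hat\gamma}\varpi^*\nuI$ as $\hat\gamma$ ranges over the lifted basis $(\hat\alpha_1,\hat\alpha_2^{(c)},\hat\alpha_3^{(c)},\hat\beta_1,\hat\beta_2^{(c)},\hat\beta_3^{(c)})$, each to be identified with one of the base half-periods $\omega_b'=\frac12\int_{\alpha_b}\nuI$ or $\omega_b''=\frac12\int_{\beta_b}\nuI$. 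The only inputs are the equivariance Lemma \ref{lm:10a}, the cycle relations (\ref{eq:gamma^(c)}), and the integral comparisons of Lemma \ref{lm:10b}; no new analytic work is needed.

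First I would dispose of the columns that carry no covering factor. For $\hat\alpha_1$ the second line of Lemma \ref{lm:10b} (with $\gamma=\varpi\hat\alpha_1=\alpha_1$) gives $\frac12\int_{\hat\alpha_1}\varpi^*\nuI=\frac12\int_{\alpha_1}\nuI=\omega_1'$. For $i=2,3$ and $c=0,1,2$ the same line of Lemma \ref{lm:10b}, together with $\varpi\hat\alpha_i^{(c)}=\alpha_i$ and $\varpi\hat\beta_i^{(c)}=\beta_i$ from (\ref{eq:gamma^(c)}), yields $\frac12\int_{\hat\alpha_i^{(c)}}\varpi^*\nuI=\omega_i'$ and $\frac12\int_{\hat\beta_i^{(c)}}\varpi^*\nuI=\omega_i''$; crucially the value is independent of $c$, which is exactly what produces the repeated blocks $\omega_2',\omega_2',\omega_2'$ and $\omega_3',\omega_3',\omega_3'$ in $\hat\omega'$, and likewise for the doubly-primed periods in $\hat\omega''$.

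The one genuinely different column is $\hat\beta_1$, which I expect to be the crux. Because $\beta_1$ has nontrivial $\ZZ_3$-monodromy for the cover $\varpi$, its full preimage $\hat\beta_1=\varpi^{-1}\beta_1$ is a single connected loop covering $\beta_1$ three times; accordingly the first line of Lemma \ref{lm:10b} applies with the factor $\tfrac13$, giving $\int_{\beta_1}\nuI=\tfrac13\int_{\hat\beta_1}\varpi^*\nuI$ and hence $\frac12\int_{\hat\beta_1}\varpi^*\nuI=3\omega_1''$. This is precisely the asymmetry with $\hat\alpha_1$, whose monodromy is trivial so that it lifts one-to-one; the factor $3$ is produced, exactly as in the proof of Lemma \ref{lm:10b}, by the equivariance of Lemma \ref{lm:10a} together with $1+\zeta_3+\zeta_3^2=0$. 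Collecting the fourteen columns in the prescribed order then reproduces the claimed vector. Since Lemmas \ref{lm:10a} and \ref{lm:10b} already carry out the analytic content, the only real obstacle is correct bookkeeping, namely matching each lifted cycle to its base period and tracking this single factor of $3$ at $\hat\beta_1$.
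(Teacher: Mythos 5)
Your proposal is correct and follows exactly the route the paper intends: the paper gives no separate argument beyond the sentence ``Lemmas \ref{lm:10a} and \ref{lm:10b} give the following proposition,'' and your column-by-column bookkeeping (the second line of Lemma \ref{lm:10b} for $\hat\alpha_1$, $\hat\alpha_i^{(c)}$, $\hat\beta_i^{(c)}$, and the first line with the factor $\tfrac13$ for $\hat\beta_1=\varpi^{-1}\beta_1$, yielding $3\omega_1''$) is precisely the omitted verification.
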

In consequence, it is natural to introduce 
$$
\hat \Lambda^{\al;(1,0)}:= \sum_{i=1}^3
  \left(\ZZ \hat \omega'_{\al^{(c)}, i} 
+\ZZ \hat \omega''_{\al^{(0)}, i} \right),
$$
where
$$
  (\hat \omega'_{\al^{(1,c)}, i})_{i=1,2,3} := 
   \frac{1}{2}(
\int_{\hat \alpha_1}  \varpi^* \nuI , 
\int_{\hat \alpha_2^{(0)} +\hat \alpha_2^{(1)} +\hat \alpha_2^{(2)} }
  \varpi^* \nuI,
\int_{\hat \alpha_3^{(0)} +\hat \alpha_3^{(1)} +\hat \alpha_3^{(2)} }
  \varpi^* \nuI),
$$
$$
  (\hat \omega''_{\al^{(1,0)}, i})_{i=1,2,3} := 
   \frac{1}{2}(
\int_{\hat \beta_1}  \varpi^* \nuI , 
\int_{\hat \beta_2^{(c)}} \iota_{\zeta_3}^c \varpi^* \nuI , 
\int_{\hat \beta_3^{(c)}} \iota_{\zeta_3}^c \varpi^* \nuI ) .
$$

On the other hand, we introduce a natural subvariety of
$\hat \JJ_7$ as follows:
\begin{definition}
For the projection of the extended Abel map,
$$
   \hat w|_{\varpi^* \nuI}: S^3 \Gamma_{\infty^{(0)}}\hat X \to \CC^3 \subset 
       \CC^7,
$$
${\hat\JJ}^{\al;(1,0)}$ is defined by
$$
{\hat\JJ}^{\al;(1,0)} := 
\frac{\hat w(S^3 \Gamma_{\infty^{(0)}}\hat X)|_{\varpi^* \nuI} }{
            \hat \Lambda_{\al^{(1,0)}}},
$$
for $c = 0, 1, 2$.
\end{definition}

Then ${\hat\JJ}^{\al;(1,0)} \subset \hat {\JJ}$
 is the domain of the $\al_1$-function,
$$
\hat\al_1(u) := \frac{1}{z_1 z_2 z_3} 
\frac{\left| \begin{matrix}
1 & z_1^3 & w_1z_1 & z_1^6 \\
1 & z_2^3 & w_2z_2 & z_2^6 \\
1 & z_3^3 & w_3z_3 & z_3^6 \\
1 & b_a   & 0  & b_a^2   \\
\end{matrix} \right|}
{\left| \begin{matrix}
1 & z_1^3 & w_1z_1 \\
1 & z_2^3 & w_2z_2 \\
1 & z_3^3 & w_3z_3 \\
1 & b_a   & 0    \\
\end{matrix} \right|},
$$
where $u = \sum_{i=1}^3\hat w((z_i, w_i))|_{\varpi^* \nuI}$
for $((z_i, w_i))_{i=1, 2, 3} \in S^3 \hat X$.

Finally we have the following proposition.
\begin{proposition}
\begin{enumerate}
\item 
 $\hat w((z_i, w_i))|_{\varpi^* \nuI}$ is surjective.

\item 
There is an equality
$$
\hat\al_1(u) = \varpi^* \al_1^{(0)}(u).
$$ 

\item
By identifying $\hat w(S^3 \hat X)|_{\varpi^* \nuI} =\CC^3$ with 
$w(S^3 X) =\CC^3$,
${\hat\JJ}^{\al,(1,0)}$ agrees with ${\hat\JJ}^{(1;0)}$.
\end{enumerate}
\end{proposition}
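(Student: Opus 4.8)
The plan is to exploit the single identity that drives the whole section, namely that the three-dimensional projection of the Abel map on $\hat X$ is literally the pullback of the Abel map on $X$. Since $\varpi^*\nuI = (\hat\nuI_1,\hat\nuI_5,\hat\nuI_6)$, for any path $\Gamma_{\hat P,\infty^{(0)}}$ we have $\hat w(\Gamma_{\hat P,\infty^{(0)}})|_{\varpi^*\nuI} = \int_{\Gamma_{\hat P,\infty^{(0)}}}\varpi^*\nuI = \int_{\varpi\Gamma_{\hat P,\infty^{(0)}}}\nuI = w(\varpi\Gamma_{\hat P,\infty^{(0)}})$, so on $S^3\Gamma_{\infty^{(0)}}\hat X$ the projected map $\hat w|_{\varpi^*\nuI}$ factors as $w$ composed with the map induced by $\varpi$ on paths. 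All three parts follow by feeding this factorization into, respectively, Abel--Jacobi surjectivity, the determinant definitions of $A_1$ and $F_1$, and the period computation of Section \ref{sec:Domain}.

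For (1), the factorization reduces surjectivity to two facts already available: $w$ is surjective on triples (Abel--Jacobi, recorded after the definition of $w$), and $\varpi$ is a covering with $\varpi(\infty^{(0)})=\infty$, so every path in $X$ based at $\infty$ lifts to one in $\hat X$ based at $\infty^{(0)}$. Hence the image of $\hat w|_{\varpi^*\nuI}$ is the full image of $w$, that is, $\CC^3$.

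For (2), I would pull the data defining $\al_1^{(0)}$ back along $\varpi$, writing $x_i = z_i^3+b_1$ and $y_i = w_i z_i$. First, $F_1\circ\varpi = \prod_i(b_1-x_i) = \prod_i(-z_i^3) = -(z_1 z_2 z_3)^3$, so $\sqrt[3]{F_1}\circ\varpi$ equals $z_1 z_2 z_3$ up to a fixed cube root of $-1$; the decisive point is that $z_i$ is a \emph{single-valued} coordinate on $\hat X$, so the cube root that lived only on $S^3\Gamma_\infty X$ becomes the honest function $z_1 z_2 z_3$ on $\hat X$ --- precisely the purpose for which $\hat X$ was built --- and this supplies the prefactor $1/(z_1 z_2 z_3)$ of $\hat\al_1$. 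Next I would substitute into the determinant definition of $A_1$ and clear the $b_1$-shifts by elementary column operations, turning the ratio of the $4\times4$ and $3\times3$ determinants into the determinant ratio defining $\hat\al_1$. The leftover constant and the phase $\zeta_3^{\varepsilon_1}$ of Theorem \ref{th:al} collapse, on $\hat X$, to the single choice of $\sqrt[3]{-1}$, and I would fix the normalization by matching the simple zero at $-\omega_1+\Theta_2$ and the first-order pole at $\Theta_2$ from Lemma \ref{lemma:zero_al}, yielding $\hat\al_1 = \varpi^*\al_1^{(0)}$.

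For (3), using the identification $\CC^3 = \hat w(S^3\hat X)|_{\varpi^*\nuI} \cong w(S^3 X)$ furnished by the factorization together with the surjectivity from (1) (so the numerator defining $\hat\JJ^{\al;(1,0)}$ is all of $\CC^3$), it remains to check $\hat\Lambda^{\al;(1,0)} = \Lambda^{(1;0)}$. I would compute the six generators of $\hat\Lambda^{\al;(1,0)}$ from Lemmas \ref{lm:10a} and \ref{lm:10b} and the displayed period matrices: the cycles $\hat\alpha_1,\hat\beta_1$ descend with the three-sheet factor of Lemma \ref{lm:10b}, while the summed cycles $\hat\alpha_i^{(0)}+\hat\alpha_i^{(1)}+\hat\alpha_i^{(2)}$ and the $\iota_{\zeta_3}^c$-twisted $\hat\beta_i^{(c)}$ ($i=2,3$) align the three lifts; tracking these one checks that the generators reproduce exactly $\Lambda^{(1;0)} = 2\ZZ\omega_1'+6\ZZ\omega_1''+6\ZZ\omega_2'+2\ZZ\omega_2''+6\ZZ\omega_3'+2\ZZ\omega_3''$ as written in Section \ref{sec:Domain}, whence $\hat\JJ^{\al;(1,0)}=\CC^3/\Lambda^{(1;0)}=\hat\JJ^{(1;0)}$. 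I expect the main obstacle to lie here: correctly tracking the factors of $3$ introduced by the degree-$3$ cover $\varpi$ together with the $\iota_{\zeta_3}$-twist built into $\hat\omega''_{\al^{(1,0)}}$, so that the lifted and twisted cycles generate $\Lambda^{(1;0)}$ on the nose rather than a finer or coarser sublattice.
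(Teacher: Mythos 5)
The paper states this proposition without any proof, so there is no argument of its own to compare against; your proposal is essentially the intended one, assembled precisely from the ingredients the section supplies (the factorization $\int_{\Gamma}\varpi^*\nuI=\int_{\varpi\Gamma}\nuI$ of the projected Abel map, Lemmas \ref{lm:10a} and \ref{lm:10b}, the period-matrix proposition, and the determinant definitions of $A_1$, $F_1$ and $\hat\al_1$), and it is correct. One point to carry through in (2): matching the divisor only fixes the ratio $\hat\al_1/\varpi^*\al_1^{(0)}$ up to a multiplicative constant, so it is the explicit substitution $x_i=z_i^3+b_1$, $y_i=w_iz_i$ into the determinants --- together with the cube root of $-1$ coming from $F_1\circ\varpi=-(z_1z_2z_3)^3$ and the phase $-\zeta_3^{c+\varepsilon_1}$ of Theorem \ref{th:al} --- that actually pins the constant to $1$, rather than the zero--pole comparison. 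In (3), note also that the paper's $\hat\Lambda^{\al;(1,0)}$ is written in terms of half-periods without the factor $2$ that appears in the definition of $\Lambda$ and of $\Lambda^{(1;0)}=2\ZZ\omega_1'+6\ZZ\omega_1''+6\ZZ\omega_2'+2\ZZ\omega_2''+6\ZZ\omega_3'+2\ZZ\omega_3''$, so the on-the-nose agreement you aim for holds only after restoring that convention --- exactly the bookkeeping hazard you flag.
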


\setcounter{section}{0}
\renewcommand{\thesection}{\Alph{section}}
\section{Appendix: Hyperelliptic $\al$ Functions}

In this appendix, we review the hyperelliptic $\al$-function
mainly following \cite{Ba1, Ba2}.

{\textit{ Hyperelliptic Curve:}}
We let  a (hyper)elliptic curve $C_g$  of genus $g$
$(g>0)$ be defined by the affine equation,
\begin{gather} \split
   y^2 &= (x-b_0)(x-b_1)(x-b_2)\cdots (x-b_{2g})\\
       &= P(x) Q(x),
\endsplit  \label{H2-1}
\end{gather}
where $b_j$'s are distinct complex numbers,
$P(x) = (x-b_1)(x-b_3) \cdots (x-b_{2g-1})$ and
$Q(x) := y^2 / P(x)$. Let $(b_j,0) = B_j \in C_g$.

For a point $(x, y)\in C_g$,
  differentials of the first kind (not normalized in the standard
way which gives the identity as the matrix of $A$-periods)
are
defined by,
\begin{gather*}   
     \nuI_i :=  \frac{x^{i-1} d x}{2y}.  \label{H2-3}
\end{gather*}
The extended Abel map from the $g$-th symmetric product
of the universal cover  $\Gamma_\infty C_g$ 
of the curve $C_g$ to $\CC^g$ is defined by,
\begin{gather*}
 w :{S}^g \Gamma_\infty C_g \longrightarrow \mathbb C^g,
\quad
      \left( w(\Gamma_{(x_1,y_1),\infty},\ldots,
      \Gamma_{(x_g,y_g),\infty})
:= \sum_{i=1}^g
       \int_{ \Gamma_{(x_g,y_g),\infty}} \nuI \right),
\end{gather*}
where $\Gamma_{(x_g,y_g),\infty}$ is a path in  the path space
$\Gamma_\infty C_g$.

Consider
$
\mathrm{H}_1(C_g, \mathbb Z)
  =\bigoplus_{j=1}^g\mathbb Z\alpha_{j}
   \oplus\bigoplus_{j=1}^g\mathbb Z\beta_{j},
$  the homology group of the hyperelliptic curve $C_g$,
where the intersections are given by
$[\alpha_i, \alpha_j]=0$, $[\beta_i, \beta_j]=0$ and
$[\alpha_i, \beta_j]=\delta_{i,j}$.
Here we employ the choice illustrated in
Figure 5.
\begin{figure}
\begin{center}
\includegraphics[scale=0.5]{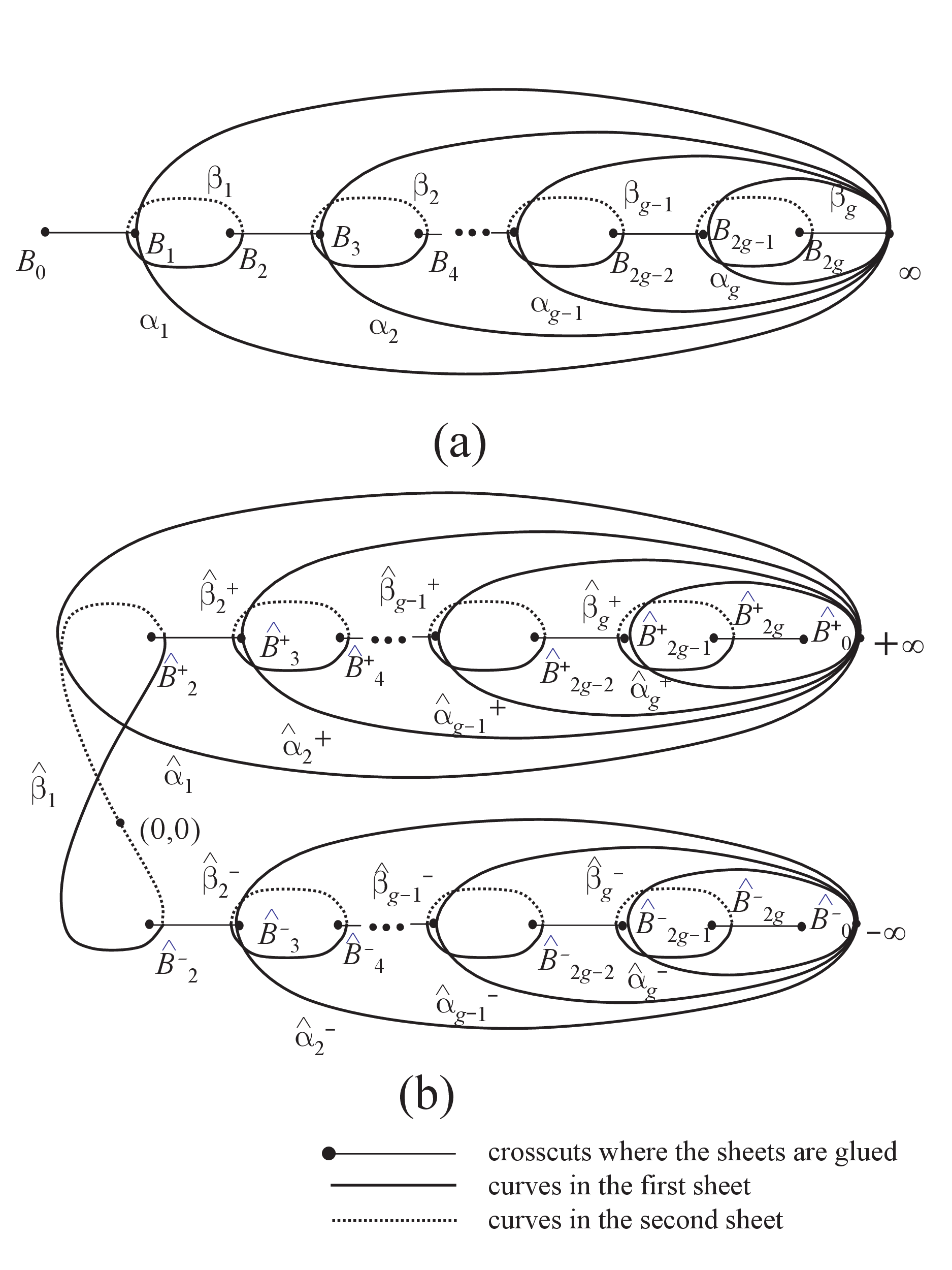}
\caption{(a): $C_g$ and (b): $\hat C_{2g-1}$}
\label{fig:5}
\end{center}
\end{figure}
The (half-period) hyperelliptic integrals
of the first kind are defined by,
\begin{gather*}    {\omega}':=\frac{1}{2}\left[\left(
     \int_{\alpha_{j}}\nuI^{}_{i}\right)_{ij}\right],
\quad
      {\omega}'':=\frac{1}{2}\left[\left(
       \int_{\beta_{j}}\nuI^{}_{i}\right)_{ij}\right],
 \quad
    {\omega}:=\left[\begin{matrix} {\omega}' \\ {\omega}''
     \end{matrix}\right].
  \label{H2-9}
\end{gather*}
If we let:
$$
	\omega_a:=\int^{B_a}_\infty \nuI, \quad
        (a=0, 1, 2, \cdots, 2g-1, 2g),
$$
Figure 5 shows:
$$
	\omega'_a = \omega_{2a - 1}, \quad
	\omega''_a = \omega_{2a} - \omega_{2a - 1}, \quad
         a>1.
$$
The Jacobian $\JJ_g$
is defined as the complex torus,
\begin{gather*}
   \JJ_g := \mathbb C^g /{{\Lambda}}_g.
     \label{H2-10}
\end{gather*}
Here  ${\Lambda}_g$   is a  $2g$-dimensional
lattice generated by the period matrix given by $2{\omega}$.
We also use the same letter $u$ for a vector in  $\mathbb C^g$
and a point of the Jacobian
$\JJ_g$.

Using the (unnormalized) differentials of the second kind,
\begin{gather*}
     \nuII_j =\dfrac{1}{2 y}\sum_{k=j}^{2g-j}(k+1-j)
      \lambda_{k+1+j} x^k d x ,
     \quad (j=1, \ldots, g), \label{H2-38}
\end{gather*}
the half-period hyperelliptic  matrices
of the second kind are defined by,
\begin{gather*}    {\eta}':=\frac{1}{2}\left[\left(
         \int_{\alpha_{j}}\nuII_{i}\right)_{ij}\right],
\quad
      {\eta}'':=\frac{1}{2}\left[\left(
        \int_{\beta_{j}}\nuII_{i}\right)_{ij}\right].
  \label{H2-39}
\end{gather*}
The hyperelliptic $\sigma$ function,
which is a holomorphic
function over $u\in \mathbb C^g$, is defined by
[\cite{Ba2}, p.336, p.350], \cite{Kl1, BEL2},
\begin{gather} \sigma(u):=\sigma(u;C_g):
  =\ \gamma\mathrm{exp}(-\dfrac{1}{2}\ ^t\ u
  {\eta}'{{\omega}'}^{-1}u)
  \vartheta\negthinspace
  \left[\begin{matrix} \delta'' \\ \delta' \end{matrix}\right]
  (\frac{1}{2}{{\omega}'}^{-1}u ; \tau),
     \label{H2-40}
\end{gather}
where $\gamma$ is a certain constant factor,
$\vartheta\left[\negthinspace \right]$ is the Riemann $\theta$ function
with characteristics,
\begin{gather*}
\vartheta\negthinspace\left[\begin{matrix} a \\ b
 \end{matrix}\right]
     (z;  \tau)
    :=\sum_{n \in \mathbb Z^g} \exp \left[2\pi \sqrt{-1}\left\{
    \dfrac 12 \ ^t\negthinspace (n+a) \tau(n+a)
    + \ ^t\negthinspace (n+a)(z+b)\right\}\right],
     \label{H2-41}
\end{gather*}
with $ \tau:={{\omega}'}^{-1}{\omega}''$
for $g$-dimensional vectors $a$ and $b$,
and
\begin{gather*}
 \delta' :=\ ^t\left[\begin{matrix} \dfrac {g}{2} & \dfrac{g-1}{2}
       & \cdots
      & \dfrac {1}{2}\end{matrix}\right],
   \quad \delta'':=\ ^t\left[\begin{matrix} \dfrac{1}{2} & \cdots
& \dfrac{1}{2}
   \end{matrix}\right].
     \label{H2-42}
\end{gather*}

\begin{proposition} 
\label{Hprop:sigma} 
If for $u$, $v\in\CC^3$, and $\ell$
($=2\omega'\ell'+2\omega''\ell''$) $\in\Lambda$, we define
\begin{align*}
  L(u,v)    &:=2{}^t{u}(\eta'v'+\eta''v''),\nonumber \\
  \chi(\ell)&:=\exp[\pi\sqrt{-1}\big(2({}^t {\ell'}\delta''-{}^t
  {\ell''}\delta') +{}^t {\ell'}\ell''\big)] \ (\in \{1,\,-1\}),
\end{align*}
the following holds
\begin{equation*}
	\sigma(u + \ell) = \sigma(u)
               \exp(L(u + \frac{1}{2}\ell, \ell)) \chi(\ell).
        \label{Heq:4.11}
\end{equation*}
\end{proposition}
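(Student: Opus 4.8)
The plan is to reduce the quasi-periodicity of $\sigma$ to the classical quasi-periodicity of the Riemann theta function with characteristics, and then to repackage the resulting factor using the generalized Legendre relation. First I would substitute the defining formula (\ref{H2-40}) for $\sigma$ into the quotient $\sigma(u+\ell)/\sigma(u)$, which splits it into two independent pieces: the ratio of the Gaussian prefactors $\exp(-\tfrac12\,{}^t u\,\eta'{\omega'}^{-1}u)$ and the ratio of the theta values. Writing $\ell = 2\omega'\ell'+2\omega''\ell''$ with $\ell',\ell''\in\ZZ^g$, the theta argument $z=\tfrac12{\omega'}^{-1}u$ is shifted by $\tfrac12{\omega'}^{-1}\ell = \ell'+\tau\ell''$, where $\tau={\omega'}^{-1}\omega''$; thus $\ell'$ plays the role of the integral part of the shift and $\ell''$ that of the $\tau$-part.

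Next I would apply the standard transformation law of $\vartheta[\,\delta''\,;\delta'\,]$ under $z\mapsto z+\ell'+\tau\ell''$. The integral shift contributes the phase $\exp(2\pi\ii\,{}^t\delta''\ell')$, while the $\tau$-shift, after reindexing the lattice sum $n\mapsto n-\ell''$, contributes $\exp(2\pi\ii(-\tfrac12\,{}^t\ell''\tau\ell''-{}^t\ell'' z-{}^t\ell''\delta'))$. Collecting the terms that involve the characteristics $\delta',\delta''$ already yields $\exp(2\pi\ii({}^t\ell'\delta''-{}^t\ell''\delta'))$, which is exactly the characteristic part of $\chi(\ell)$; the residual factor $\exp(\pi\ii\,{}^t\ell'\ell'')$ in $\chi(\ell)$ will have to be produced from the quadratic terms treated below.

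For the prefactor I would use that $\eta'{\omega'}^{-1}$ is symmetric (the switch ${}^t\omega^\alpha\eta^\beta={}^t\eta^\beta\omega^\alpha$ recorded in the main text) to obtain $-\tfrac12\,{}^t(u+\ell)\eta'{\omega'}^{-1}(u+\ell)+\tfrac12\,{}^t u\,\eta'{\omega'}^{-1}u = -{}^t(u+\tfrac12\ell)\,\eta'{\omega'}^{-1}\ell$, and then substitute ${\omega'}^{-1}\ell = 2\ell'+2\tau\ell''$. At this stage the $u$-linear contributions come from the $\eta'$-terms of the prefactor and from the single term $-2\pi\ii\,{}^t\ell'' z=-\pi\ii\,{}^t\ell''{\omega'}^{-1}u$ coming from theta; the crux is to show that they assemble into the linear part of $L(u+\tfrac12\ell,\ell)=2\,{}^t(u+\tfrac12\ell)(\eta'\ell'+\eta''\ell'')$. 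This is precisely where the generalized Legendre relation (\ref{eq2.7}), in the form $\omega'{}^t\eta''-\omega''{}^t\eta'=\tfrac{\pi\ii}{2}I$ together with the symmetries ${}^t\omega'\eta'={}^t\eta'\omega'$ and ${}^t\omega''\eta''={}^t\eta''\omega''$, is used to convert the $({}^t\omega')^{-1}$- and $\eta'{\omega'}^{-1}\omega''$-expressions into the $\eta''$-term and to fix all coefficients and signs (which depend on the chosen normalization of $\nuII$ and of the $\eta$-matrices). The same relation, applied to the purely $u$-independent part, reconciles $-\tfrac12\,{}^t\ell\,\eta'{\omega'}^{-1}\ell$ and $-\tfrac12\,{}^t\ell''\tau\ell''$ with the $\tfrac12\ell$-shift in $L$ and leaves over exactly the $\exp(\pi\ii\,{}^t\ell'\ell'')$ needed to complete $\chi(\ell)$.

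The hard part will be this last bookkeeping step: tracking every quadratic and linear term through the Legendre relation so that all $({}^t\omega')^{-1}$- and $\tau$-dependent pieces cancel and one is left with only the intrinsic second-kind periods $2(\eta'\ell'+\eta''\ell'')$ and the sign $\chi(\ell)\in\{1,-1\}$. Since this statement is the hyperelliptic counterpart of Proposition \ref{prop:sigma}(3) in the main text, the computation is entirely classical; I would either carry it out explicitly in the conventions of \cite{Ba2,BEL2} or simply invoke it, as the manipulation is identical in the hyperelliptic and trigonal settings.
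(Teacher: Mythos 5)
The paper states Proposition \ref{Hprop:sigma} without any proof --- it is the classical translation law for the hyperelliptic sigma function, quoted from Baker and Klein --- so there is no in-paper argument to compare against; your outline is the standard derivation and is sound in strategy (split $\sigma(u+\ell)/\sigma(u)$ via (\ref{H2-40}) into the Gaussian prefactor and the theta quotient, apply the quasi-periodicity of $\vartheta[\delta]$ under $z\mapsto z+\ell'+\tau\ell''$, and recombine the exponents using the generalized Legendre relation (\ref{eq2.7})). One caution for the bookkeeping you defer: the switch ${}^t\omega^\alpha\eta^\beta={}^t\eta^\beta\omega^\alpha$ is valid only for $\alpha=\beta$; for the mixed terms the Legendre relation gives ${}^t\omega'\eta''-{}^t\eta'\omega''=\tfrac{\pi\sqrt{-1}}{2}I$, and it is precisely this defect that converts the $\eta'{\omega'}^{-1}\omega''\ell''$ and $-\pi\sqrt{-1}\,{}^t\ell''{\omega'}^{-1}u$ pieces into the $\eta''\ell''$ term of $L$ while leaving the residual $\exp(\pi\sqrt{-1}\,{}^t\ell'\ell'')$ of $\chi(\ell)$, so that identity should be invoked explicitly rather than the unrestricted symmetry.
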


\bigskip
\begin{definition}\label{Hdef-2.3}
\begin{enumerate}[{(}1{)}]
\item We define the double coverings of $\JJ_g$ by
$$
	\JJ_g^{(a)} = \CC^g / \Lambda^{(a)},
$$
where $\Lambda^{(0)} :=  \bigcap_{a=1}^{2g} \Lambda^{(a)}$,
\begin{gather*}
\begin{split}
      \Lambda^{(a)} :=&  2 \ZZ \omega'_a + 4 \ZZ \omega''_a +
      \sum_{b=1,\neq a}  (2 \ZZ \omega'_b + 2 \ZZ \omega''_b) 
      \quad \mbox{for }  a = 1, 3, \ldots, 2g-1,\\
      \Lambda^{(a)} :=&  
4 \ZZ \omega'_a +4 \ZZ \omega''_a) +
      \sum_{b=1,\neq a}  (2 \ZZ \omega'_b + 2 \ZZ \omega''_b) 
      \quad \mbox{for }  a = 2, 4, \ldots, 2g.\\
\end{split}
\end{gather*}

\item 
For a point
$\Gamma_{P_c,\infty}\in \Gamma_\infty C_g$,
$$
\varepsilon_r^{(c)}: \Gamma_{\infty} C_g \to \ZZ_2
$$
 be defined by $\varepsilon_r^{(c)}:=w_r-w_\infty$
for the winding number $w_r$ around $B_a$ in 
$\kappa_\infty\Gamma_\infty C_g$ and
 the winding number $w_\infty$ around $\infty$ in 
$\kappa_\infty\Gamma_\infty C_g$.
For a point
$(\Gamma_{P_1,\infty}, \Gamma_{P_2,\infty},\ldots ,\Gamma_{P_g,\infty})$ 
in $S^g \Gamma_{\infty} C_g$,
let
$$
\varepsilon_r: S^g \Gamma_{\infty} C_g \to \ZZ_2, \quad
(\varepsilon_r:= \varepsilon_r^{(1)} + \varepsilon_r^{(2)}
                + \cdots +\varepsilon_r^{(g)}). 
$$

\item 
For a point
$(\Gamma_{P_1,\infty}, \Gamma_{P_2,\infty},\ldots ,\Gamma_{P_g,\infty})$ 
in $S^g \Gamma_{\infty} C_g$, let $u=w
(\Gamma_{P_1,\infty}, \Gamma_{P_2,\infty},\ldots ,\Gamma_{P_g,\infty})$. 
The hyperelliptic $\mathrm{al}$ function over
$\JJ^{(r)}$ and $w^{-1}\JJ^{(r)}$ as a subset of a quotient space of
in $S^g \Gamma_{\infty} C_g$,
is formally defined by
\cite[p.340]{Ba2}, \cite{Wei},
\begin{gather}
\mathrm{al}_r(u): = (-1)^{\varepsilon_r(
\Gamma_{\infty, P_1}, \Gamma_{\infty, P_2},\ldots ,\Gamma_{\infty, P_g})}
\sqrt{F(b_r)} , \label{H2-20}
\end{gather}
where 
\begin{gather}
	F(x):= (x-x_1) \cdots (x-x_g),
          \label{H2-21}
\end{gather}
for a preimage $(\Gamma_{(x_i, y_i),\infty})_{i=1, \ldots, g}
\in S^g\Gamma_{\infty}C_g$ of
$w((\Gamma_{(x_i, y_i),\infty})_{i=1, \ldots, g}) = u \in \JJ^{(r)}$ 
under the Abel map.
\end{enumerate}
\end{definition}


\begin{remark}
{\rm{
The definition (\ref{H2-20}) is historically
\begin{gather}
\mathrm{al}_r(u) = 
\tilde \gamma_r\sqrt{F(b_r)} ,  \label{H2-new}
\end{gather}
where $\tilde \gamma_r:=\sqrt{-1/P'(b_r)}$.
Thus the preimage of $w$ of $\JJ^{(r)}$ is a quotient space of
$S^g\Gamma_{\infty} C_g$.
We comment on the sign $(-1)^{\varepsilon}$ in
 the right-hand side of (\ref{H2-20}).
The hyperelliptic curve $C_g$ admits the hyperelliptic
involution $\iota_H : (x, y) \to (x, -y)$. In a neighborhood 
of the branch point $B_r=(b_r,0)$, $y$ or $t$ such that
 $t^2 = (x - b_r)$
are local parameters. Thus for $t_i$ such that $t_i^2 := (x_i - b_r)$
$\iota_H^{(a)} t_i = - t_i$.
Similarly, $t_1 t_2 \cdots t_g$ is  defined
in a neighborhood of $B_r$ and 
$\iota_H$ can be made to act on the product:
a circuit  around the point produces the factor
$(-1)^{\varepsilon_r(
\Gamma_{\infty, P_1}, \Gamma_{\infty, P_2},\ldots ,\Gamma_{\infty, P_g})}$.

Further  the inverse $1/t_i$
is a local parameter at $\infty$ and thus
there is an action $\iota_H^{(\infty)} (1/t_i) = - (1/t_i)$,
and a circuit around $\infty$  generates
$(-1)^{\varepsilon_r(
\Gamma_{\infty, P_1}, \Gamma_{\infty, P_2},\ldots ,\Gamma_{\infty, P_g})}$.

However we claim that we can make sense of $t_1 t_2 \cdots t_g$ globally and 
  (\ref{H2-new}) holds globally by (\ref{H2-20}).  
In analogy to Jacobi's sn, cn, dn functions,
we need to extend the domain of the Jacobi inversion from $\JJ_g$
 to $\JJ_g^{(r)}$ and  $\JJ_g^{(0)}$. We show the extension in 
Proposition \ref{prop:Jal}; here
we  consider the behavior of the right-hand side of 
(\ref{H2-20}). Let us regard it as a function of $w(P_1)$
by fixing $P_2$, $\ldots$, $P_g$. 
Then a circuit around  $\alpha_b$ (see Figure 5 (a)) does not
have any effect on the sign factor of $t_1$.
On the other hand, when we go around $\beta_a$ in Figure 5 (a) once,
 $t_1$ acquires a sign and in order to cancel it,
we need to go twice  around $\beta_a$.
Thus the (homotopy) equivalence relation
 is the same as that which holds for $\JJ_g^{(a)}$.
}}
\end{remark}

\bigskip

\begin{proposition}
Introducing the half-period $\omega_r :=
\int^{b_r}_\infty du^{}$,
we have the relation {\rm\cite[340]{Ba2}},
\begin{gather}
	\al_r(u) =\gamma_r'' \frac{
\exp(-{}^t u  \varphi_r)
\sigma( u + \omega_r)}{\sigma(u)},
\quad r=1,2,\ldots,2g,
           \label{H2-43}
\end{gather}
where $\gamma_r''$ is a certain constant.
$$
     \varphi_r =
 \left\{\begin{matrix} 
{\eta}'{{\omega}'}^{-1}\omega'_r & r = 1, 3, \ldots, 2g - 1, \\
{\eta}''{{\omega}''}^{-1}\omega''_r
+{\eta}'{{\omega}'}^{-1}\omega'_r
 & r = 2, 4, \ldots, 2g . \\
\end{matrix}\right.
$$
\end{proposition}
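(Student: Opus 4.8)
The plan is to show that the right-hand side of (\ref{H2-43}), which I denote $\Psi_r(u):=\gamma_r''\exp(-{}^t u\,\varphi_r)\,\sigma(u+\omega_r)/\sigma(u)$, and the algebraically defined $\al_r(u)=\tilde\gamma_r\sqrt{F(b_r)}$ of (\ref{H2-new}) are two meromorphic functions on the double cover $\JJ_g^{(r)}=\CC^g/\Lambda^{(r)}$ with the same divisor, so that their ratio is a nonzero constant, which is then pinned down by a local expansion. First I would record the one structural fact that drives everything: since $B_r=(b_r,0)$ is a branch point it is fixed by the hyperelliptic involution $\iota_H$, so $\omega_r=w(B_r)=w(\iota_H B_r)\equiv -\omega_r$ modulo $\Lambda$, i.e. $2\omega_r\in\Lambda$ and $\omega_r$ is a half-period.

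Next I would establish that $\Psi_r$ descends to $\JJ_g^{(r)}$. Writing $\ell=2\omega'\ell'+2\omega''\ell''\in\Lambda^{(r)}$ and applying the quasi-periodicity of $\sigma$ (Proposition \ref{Hprop:sigma}), the characteristic factors $\chi(\ell)$ cancel between numerator and denominator, and, since $L(u,v)$ is linear in its first slot, the translate produces
$$\frac{\sigma(u+\omega_r+\ell)}{\sigma(u+\ell)}=\frac{\sigma(u+\omega_r)}{\sigma(u)}\exp\big(L(\omega_r,\ell)\big),$$
while the prefactor contributes $\exp(-{}^t\ell\,\varphi_r)$. Thus $\Psi_r(u+\ell)=\Psi_r(u)\exp\big(L(\omega_r,\ell)-{}^t\ell\,\varphi_r\big)$, and periodicity over $\Lambda^{(r)}$ is exactly the congruence $L(\omega_r,\ell)\equiv{}^t\ell\,\varphi_r\pmod{2\pi\ii\ZZ}$ for all $\ell\in\Lambda^{(r)}$. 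I would reduce this using the symmetry ${}^t\omega^\alpha\eta^\beta={}^t\eta^\beta\omega^\alpha$ together with the generalized Legendre relation: the stated $\varphi_r$ is precisely what cancels the non-integral part of $L(\omega_r,\ell)$, and the residual $\pi\ii$-terms coming from Legendre land in $2\pi\ii\ZZ$ exactly because the coefficient of the $r$-direction is doubled in $\Lambda^{(r)}$. The two cases displayed for $\varphi_r$ match $\omega_r$ being of pure $\omega'$-type ($r$ odd) versus mixed $\omega'+\omega''$-type ($r$ even).

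With both functions living on $\JJ_g^{(r)}$, I would compare divisors. Since $\sigma$ has simple zeros exactly along $\WW_{g-1}=:\Theta$, $\Psi_r$ has a simple pole along $\Theta$ and a simple zero along $-\omega_r+\Theta$; as $\JJ_g^{(r)}$ is by construction the cover on which $\al_r$ is single-valued, equivalently $2\omega_r\in\Lambda^{(r)}$, the locus $-\omega_r+\Theta$ coincides with $\omega_r+\Theta$ there. On the other side, in a local parameter $t_i$ with $t_i^2=x_i-b_r$ one has $b_r-x_i=-t_i^2$, so $\sqrt{F(b_r)}$ acquires a simple zero (in the cover coordinate) when a point reaches $B_r$, i.e. along $\omega_r+\Theta$, and a simple pole along $\Theta$ as a point tends to $\infty$. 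Hence $\Psi_r$ and $\al_r$ have identical divisors, so $\Psi_r/\al_r$ is holomorphic and nowhere vanishing on the compact torus $\JJ_g^{(r)}$, therefore constant. Finally I would determine that constant, and thereby $\gamma_r''$, by matching the leading coefficients of the two sides as a single point tends to $\infty$: in a local parameter $s$ at $\infty$ one has $F(b_r)\sim \pm\, s^{-2}$, and the normalized local expansion of $\sigma$ near $\Theta$ then recovers $\gamma_r''$ in terms of $\tilde\gamma_r=\sqrt{-1/P'(b_r)}$.

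The main obstacle is the quasi-periodicity congruence of the second step: one must verify $L(\omega_r,\ell)\equiv{}^t\ell\,\varphi_r\pmod{2\pi\ii\ZZ}$ on every generator of $\Lambda^{(r)}$, and it is here that the precise shape of $\varphi_r$ and the doubling of the lattice in the $r$-direction are indispensable, the even-$r$ case (where both $\eta'{\omega'}^{-1}\omega'_r$ and $\eta''{\omega''}^{-1}\omega''_r$ enter) being the delicate one.
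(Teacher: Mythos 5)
Your proposal is correct and follows essentially the same route as the paper: the paper's proof is precisely ``compare zeros and poles of both sides'' (with the $\Lambda^{(r)}$-periodicity of the theta quotient handled separately in Proposition \ref{Hprop:al_period} by the same $L(\omega_r,\ell)$-versus-${}^t\ell\,\varphi_r$ computation you describe), so you have simply written out in full the divisor comparison and normalization that the paper leaves implicit.
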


\begin{proof}
By comparing zeros and poles of both sides, we have the
result.   \end{proof}



\begin{proposition} \label{Hprop:al_period}
For a lattice point $\ell$ in $\Lambda^{(b)}$
$$
\al_b(u)=\al_b(u+ \ell).
$$
\end{proposition}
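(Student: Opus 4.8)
The plan is to prove the invariance by the same quasi-periodicity calculation used for the trigonal case in the proof of Proposition~\ref{prop:al_period}, rather than by the zero/pole comparison used for (\ref{H2-43}). Write a lattice point as $\ell = 2\omega'\ell' + 2\omega''\ell''$ with $\ell',\ell''\in\ZZ^g$, and form
$$
\frac{\al_b(u+\ell)}{\al_b(u)} = \ee^{-{}^t\ell\,\varphi_b}\,
\frac{\sigma(u+\omega_b+\ell)}{\sigma(u+\omega_b)}\,
\frac{\sigma(u)}{\sigma(u+\ell)}.
$$
Applying Proposition~\ref{Hprop:sigma} to each quotient of $\sigma$'s (with base points $u+\omega_b$ and $u$ respectively), the two sign factors $\chi(\ell)$ cancel, and since $L(\,\cdot\,,\ell)$ is linear in its first argument the factor $\exp(L(u+\tfrac12\ell,\ell))$ cancels as well; all $u$-dependence disappears and we are left with the constant
$$
\frac{\al_b(u+\ell)}{\al_b(u)} = \exp\bigl(L(\omega_b,\ell) - {}^t\ell\,\varphi_b\bigr).
$$
It therefore suffices to show $L(\omega_b,\ell) - {}^t\ell\,\varphi_b \in 2\pi\ii\ZZ$ for every $\ell\in\Lambda^{(b)}$.

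Next I would expand this exponent using the explicit half-periods. By Figure~\ref{fig:5}, $\omega_b = \omega'_a$ for $b=2a-1$ and $\omega_b = \omega'_a+\omega''_a$ for $b=2a$, and correspondingly $\omega'^{-1}\omega'_a$ (resp. $\omega''^{-1}\omega''_a$) is the $a$-th standard basis vector, so $\varphi_b$ is the $a$-th column $\eta'_a$ (odd case) or $\eta'_a+\eta''_a$ (even case) of the period matrices of the second kind. Substituting $L(\omega_b,\ell)=2{}^t\omega_b(\eta'\ell'+\eta''\ell'')$ and ${}^t\ell = 2{}^t\ell'{}^t\omega' + 2{}^t\ell''{}^t\omega''$, the ``diagonal'' pieces cancel by the symmetry ${}^t\omega'\eta'={}^t\eta'\omega'$, ${}^t\omega''\eta''={}^t\eta''\omega''$ (the switch ${}^t\omega^\alpha\eta^\beta={}^t\eta^\beta\omega^\alpha$ used in Proposition~\ref{prop:al_period}). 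What remains is governed by the off-diagonal block of the generalized Legendre relation (\ref{eq2.7}), namely ${}^t\omega'\eta''-{}^t\eta'\omega''=\tfrac{\pi\ii}{2}I$, and a short computation collapses the exponent to $\pi\ii\,\ell''_a$ in the odd case and to $\pi\ii(\ell''_a-\ell'_a)$ in the even case.

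Finally I would read off the conclusion from the shape of $\Lambda^{(b)}$. In the odd case the offending coordinate is $\ell''_a$, and the generator $4\ZZ\omega''_a$ of $\Lambda^{(b)}$ forces $\ell''_a\in 2\ZZ$, so $\pi\ii\,\ell''_a\in 2\pi\ii\ZZ$; in the even case both $\ell'_a$ and $\ell''_a$ occur, and the generators $4\ZZ\omega'_a$ and $4\ZZ\omega''_a$ force both to be even, so $\pi\ii(\ell''_a-\ell'_a)\in 2\pi\ii\ZZ$. Thus the matching odd/even split in the definitions of $\varphi_b$ and of $\Lambda^{(b)}$ is exactly what makes the multiplier trivial. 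I expect the only delicate point to be the bookkeeping of this residual Legendre term — selecting the correct transposed form of (\ref{eq2.7}) and checking the two parity cases against the corresponding $4\ZZ$-generators — while the cancellations are otherwise identical to those in the trigonal proof of Proposition~\ref{prop:al_period}.
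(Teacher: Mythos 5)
Your proposal is correct and follows essentially the same route as the paper's own proof: reduce via the quasi-periodicity formula to showing $\exp\bigl(L(\omega_b,\ell)-{}^t\ell\,\varphi_b\bigr)=1$, then evaluate the exponent with the symmetry and off-diagonal parts of the generalized Legendre relation, leaving $\pi\ii\,\ell''_a$ (resp. $\pi\ii(\ell''_a-\ell'_a)$), which the $4\ZZ$-generators of $\Lambda^{(b)}$ render a multiple of $2\pi\ii$. Your write-up is in fact more complete than the paper's, which only displays the odd case and leaves the parity bookkeeping implicit.
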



\begin{proof} We know:
$$
 \frac{\sigma(u + \omega_b+ \ell)}{\sigma(u+\ell)} = 
\frac{\sigma(u + \omega_b)}{\sigma(u)}
 \exp(L(\omega_b , \ell)) .
$$
For the $b = 2a -1$ case, 
\begin{gather}
\begin{split}
L( \omega_b' , \ell) &= 2^t\omega_b' (\eta'\ell' +\eta''\ell'')\\
 &=2^t\omega_b' \eta'\ell' +2^t\omega_b''\eta'\ell''-\pi\ii\ell''_b,\\
\end{split}
\end{gather}
whereas
\begin{gather}
 2^t(\omega'\ell' +\omega''\ell'')\eta' \omega^{\prime -1}\omega_b'
 = 2^t\ell'{}^t\eta'\omega_b'+2{}^t\ell''\omega_b''\eta'.
\end{gather}
Hence we have the equality.
\end{proof}

As a generalization of the relation $\sn^2 u + \cn^2 u = 1$,
we have the following relation.
\begin{proposition}
Let $A_a(x) = P(x) (x-b_a)$ and $a \in \{2, 4, \ldots, 2g\}$.
$$
    \sum_{r = 1, 3, \ldots, 2g-1, a} \frac{\al_r(x)^2}{A'_a(b_r)} =1.
$$
\end{proposition}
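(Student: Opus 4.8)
The plan is to reduce the statement to an elementary partial-fraction identity in the symmetric functions of the Jacobi-inversion coordinates. By Definition~\ref{Hdef-2.3}, for a preimage $(\Gamma_{(x_i,y_i),\infty})_{i=1,\dots,g}$ of $u$ under the Abel map one has $\al_r(u)=(-1)^{\varepsilon_r}\sqrt{F(b_r)}$ with $F(x)=\prod_{i=1}^g(x-x_i)$; squaring removes the sign, so $\al_r(u)^2=F(b_r)$. In particular $\al_r(u)^2$ is a single-valued function already on $\JJ_g$ (no passage to the double cover $\JJ_g^{(r)}$ is needed once we square), and it suffices to establish the polynomial identity
$$
\sum_{r\in\{1,3,\dots,2g-1,a\}}\frac{F(b_r)}{A_a'(b_r)}=1,
$$
regarded as an identity in the indeterminates $x_1,\dots,x_g$ (equivalently in the elementary symmetric functions that determine $F$).

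First I would record the degree bookkeeping. The polynomial $A_a(x)=P(x)(x-b_a)$ is monic of degree $g+1$, and since $a$ is even while $P$ collects the odd-indexed branch points, its $g+1$ roots are exactly $\{b_r:\ r\in\{1,3,\dots,2g-1,a\}\}$, all distinct. As $F$ is monic of degree $g=\deg A_a-1$, the rational function $F/A_a$ is proper and admits the partial-fraction expansion
$$
\frac{F(x)}{A_a(x)}=\sum_{r}\frac{F(b_r)}{A_a'(b_r)}\,\frac{1}{x-b_r}.
$$
Multiplying by $x$ and letting $x\to\infty$, the left-hand side tends to the ratio of leading coefficients, namely $1$, while the right-hand side tends to $\sum_r F(b_r)/A_a'(b_r)$; equating proves the identity. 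Equivalently, and in the spirit of the residue proof of Theorem~\ref{th:Frob}, one applies the residue theorem on $\PP^1$ to the meromorphic one-form $F(x)\,dx/A_a(x)$: its only poles are the simple poles at the $b_r$, with $\res_{b_r}=F(b_r)/A_a'(b_r)$, together with a pole at $\infty$ where the local coordinate $t=1/x$ gives $\res_\infty=-1$; vanishing of the total residue then yields $\sum_r F(b_r)/A_a'(b_r)=1$. Substituting $F(b_r)=\al_r(u)^2$ completes the argument.

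The computation itself is routine; the two points that need care are the bookkeeping ensuring that the roots of $A_a$ coincide with the index set $\{1,3,\dots,2g-1,a\}$ (so that the even branch point $b_a$ contributes exactly the term $\al_a(u)^2/A_a'(b_a)$, with $A_a'(b_a)=P(b_a)$ since $P(b_a)\neq0$), and the verification that $\deg F\le\deg A_a-1$, which is what makes $F/A_a$ proper and forces the polynomial part of the expansion to vanish. I expect the only genuine obstacle to be confirming the normalization $\al_r(u)^2=F(b_r)$ uniformly for the even index $r=a$ as well as for the odd indices, i.e.\ that Definition~\ref{Hdef-2.3} applies without change at $b_a$; once this is in hand, all the analytic content is carried by the elementary identity above, exactly as $\sn^2u+\cn^2u=1$ is recovered in the genus-one case $A_2(x)=(x-b_1)(x-b_2)$.
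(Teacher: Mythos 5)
Your argument is correct and complete. Note that the paper does not actually prove this proposition: its ``proof'' consists of the citations to \cite[p.292]{Wei} and \cite[Proposition 3.4]{Ma3}. The computation you supply --- squaring to kill the sign $(-1)^{\varepsilon_r}$ so that $\al_r(u)^2=F(b_r)$ by \eqref{H2-20}, observing that the $g+1$ distinct roots of the monic degree-$(g+1)$ polynomial $A_a=P(x)(x-b_a)$ are exactly $\{b_r:\ r\in\{1,3,\dots,2g-1,a\}\}$ with $A_a'(b_r)=\prod_{s\neq r}(b_r-b_s)$, and then invoking the Lagrange-interpolation (equivalently partial-fraction, equivalently sum-of-residues) identity $\sum_r F(b_r)/A_a'(b_r)=1$ for the monic degree-$g$ polynomial $F$ --- is precisely the classical argument contained in those references, so you have in effect reconstructed the proof the paper delegates to the literature. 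Your residue formulation of the identity, via $\oint F(x)\,dx/A_a(x)$ on $\PP^1$ with $\res_\infty=-1$, has the additional merit of running parallel to the proof of the trigonal analogue, Theorem \ref{th:Frob}, where the same device is applied to the one-form $K$. The two points you flag as needing care (that $\deg F=\deg A_a-1$ makes the expansion proper, and that the definition of $\al_r$ applies at the even index $r=a$ with $A_a'(b_a)=P(b_a)\neq 0$) are indeed the only ones, and both check out against Definition \ref{Hdef-2.3} and relation \eqref{H2-43}, which are stated for all $r=1,\dots,2g$.
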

\begin{proof} See \cite[p.292]{Wei} and also \cite[Proposition 3.4]{Ma3}.
\end{proof}
\begin{remark}{\rm{
The relation implies the $g$ homogeneous identities,
$$
\sum_{r = 1, 3, \ldots, 2g-1, a}
\frac{(\gamma_r'')^2 \ee^{-2{}^t u  \varphi_r} }{A'_a(b_r)} 
\sigma( u + \omega_r)^2
\equiv\sigma(u)^2, \quad a = 2, 4, \ldots, 2g,
$$
among  $2g+1$ homogeneous coordinates, namely,
 $\sigma( u + \omega_r)$ ($r=1,2,\ldots,2g$) and $\sigma(u)$.
Noting that the square of each $\al_r$ is a function over
the hyperelliptic Jacobi variety $\JJ_g$,
these quadrics cut out the image of the Jacobian,
which is a $g$-dimensional variety
embedded in $\mathbb{P}^{2g}$.
}}
\end{remark}

\begin{remark}{\rm{

For the genus-one case,
the Weierstrass $\wp$ function corresponds to a curve
$y^2 =  (x - e_1) (x - e_2) (x - e_3)$,
whereas the Jacobi $\sn$ functions is defined on:
\begin{equation}
           w^2 = (z^2 - 1) (z^2 - k^2),
\label{Hw2:1}
\end{equation}
where $w = y /  z\sqrt{(e_2-e_1)^3}$, $z = \sqrt{(x - e_1)/(e_2-e_1)}$ and
$$
	\frac{dx}{2 y} =2 \sqrt{e_2-e_1} \frac{ dz }{2 w}.
$$
We have employed a curve (\ref{H2-1}) with $f(x)$ of odd
degree (thus a branchpoint at $\infty$),
 and the associated $\wp_{ij}$ function.

Note that when $g=1$, (\ref{Hw2:g}) is essentially reduced to
(\ref{Hw2:1}).
}}
\end{remark}

Given that the $\al_r$ function is a generalization of the $\sn$-function,
 we considered
a  genus $2g-1$  curve $\hat C_{2g-1}$ whose affine part is given by
\begin{equation}
	w^2 = \prod_{i=1, \neq r}^{2g+1} ( z^2 - a_i),
\label{Hw2:g}
\end{equation}
where $a_i = b_i - b_r$, $z = \sqrt{x - b_r}$, and $w = y/z$.

Let $(b_i, 0)=B_i \in C_g$ be the branch points on the affine plane
and $(x, y) \in C_g$ be a general point $P$.
For $c_i^2 := a_i$,
let $(\pm c_i, 0) \in \hat C_{2g-1}$
be $\hat B_i^\pm$ as a finite branch point
and $( z,  w) \in C_g$ be 
a general point $\hat P^\pm$.

There is an involution $\iota_A: (z, w) \mapsto (- z, w)$
as well as
 the hyperelliptic involution $\hat\iota_H: (z, w) \mapsto (z, - w)$
and $\iota_H: (x, y) \mapsto (x, -y)$.

At the point $\infty$ of $\hat C_{2g+1}$, acting by $\iota_H$
and $\iota_A$, we identify the actions
$\hat\iota_H$ and  $\iota_A$, i.e.,
$$
\hat\iota_H : \pm \infty \mapsto \mp \infty, \quad
\iota_A : \pm \infty \mapsto \mp \infty.
$$
On the other hand $(0, 0) \in \hat C_{2g+1}$, which corresponds to
$B_r \in  C_{g}$ is the fixed point of $\hat\iota_H$ and  $\iota_A$.

Let us consider the $r=1$ case.
Then there is a {\it{double covering}}:
$$
     \varpi_{g}: \hat C_{2g-1} \to C_g, \quad 
     (\hat P =(z, w) \mapsto P = (z^2 + b_r, wz)),
$$
and
$$
(0,0) \mapsto B_1, \quad
	\hat B_i^\pm \to B_i, \quad (i = 2, 3, \cdots, 2g, 2g+1).
$$
We illustrate this in Figure 6,
which is essentially the same as the picture in \cite[p.296]{ACGH}.
\begin{figure}
\begin{center}
\includegraphics[scale=0.5]{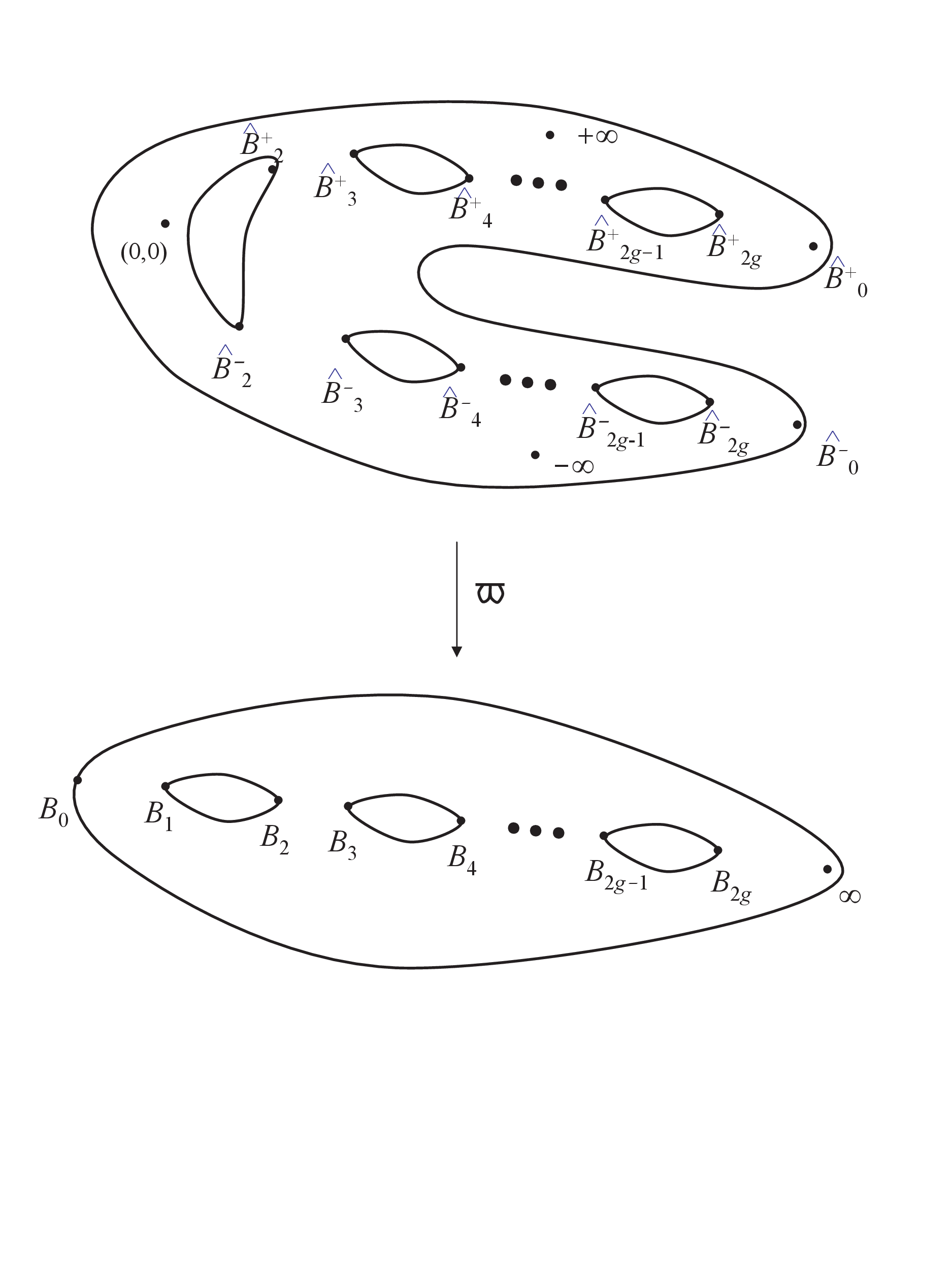}
\caption{$\varpi: \hat C_{2g-1} \to C_g$}
\label{fig:6}
\end{center}
\end{figure}

The (unnormalized) basis of  holomorphic one-forms
over $\hat C_{2g-1}$ is denoted by
$$
     \hat \nuI := \begin{pmatrix}
     \hat \nuI_1 \\
     \hat \nuI_2 \\
        \vdots\\
     \hat \nuI_{2g-1} \\
      \end{pmatrix}, \quad
     \hat \nu^I_j = \frac{z^{j-1} dz}{ w},
        \quad (j = 1, 2, \ldots, 2g - 1).
$$
Here we have removed the factor $1/2$ for later convenience.
Let us consider the Abel map
\begin{gather*}
\hat w :{S}^k \Gamma_{-\infty} \hat C_{2g-1} \longrightarrow \CC^{2g-1},
\quad
      \left( \hat w(\Gamma_{(x_1,y_1),-\infty},\ldots,\Gamma_{(x_k,y_k),-\infty}):= \sum_{i=1}^k
       \int_{\Gamma_{(x_i,y_i),-\infty}} \hat\nuI \right).
\end{gather*}
As the contours in Figure 5 (b) illustrate,
the associated periodic matrices are given as,
$$
  (\hat \omega', \hat \omega'') := 
   \frac{1}{2}\left(
\left(\int_{\hat \alpha_1} \hat \nuI, 
\left( \int_{\hat \alpha_i^+} \hat \nuI, 
\int_{\hat \alpha_i^-} \hat \nuI\right)_{i=2, \ldots, g}\right),
\left(\int_{\hat \beta_1} \hat \nuI, 
\left( \int_{\hat \beta_i^+} \hat \nuI, 
\int_{\hat \beta_i^-} \hat \nuI\right)_{i=2, \ldots, g}\right)\right).
$$
The lattice associated with the curve $\hat C_{2g-1}$ is
denoted by $\hat \Lambda$ and its Jacobian  by
$\hat\JJ_{2g-1} = \CC^{2g-1}/\hat\Lambda$.

Direct computations show the following facts:
\begin{proposition}
\begin{enumerate}
\item
$$
	\frac{z^{2i-2} dz}{w} = \frac{x^{i-1} dx}{2 y},
           \quad (i=1, \ldots, g),
\quad \varpi^*\nuI = \begin{pmatrix} 
\hat\nuI_1\\ \hat\nuI_3\\ \vdots\\ \hat\nuI_{2g-1}\\
\end{pmatrix}.
$$

\item
$$
   \hat \iota_H \varpi^* \nuI
= \varpi^*  \iota_H  \nuI
=   \hat \iota_A \varpi^* \nuI.
$$
\item By defining
$$
   \left( 
       \sum_{i}^g\int_{\Gamma_{(x_i,y_i),-\infty}} \varpi^*\nuI \right),
$$
$\hat w_{\varpi^* \nuI} : S^g \Gamma_{-\infty}\hat C_{2g-1} \to \CC^g$
is a surjection.
\end{enumerate}
\end{proposition}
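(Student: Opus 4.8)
The plan is to dispatch the three parts separately: (1) and (2) are direct substitutions through the covering $\varpi$, while (3) reduces the surjectivity of the partial Abel map to Jacobi inversion downstairs on $C_g$. Throughout I would use the defining relations $x-b_r=z^2$, $y=wz$ of $\varpi$, whence $dx=2z\,dz$.

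For (1), substituting these relations and taking (as one may) the translated basis $(x-b_r)^{i-1}dx/2y$ of the first-kind differentials on $C_g$ gives
\[
\varpi^*\frac{(x-b_r)^{i-1}dx}{2y}=\frac{z^{2i-2}\cdot 2z\,dz}{2wz}=\frac{z^{2i-2}dz}{w}=\hat\nuI_{2i-1},
\]
which is the displayed identity. In the original basis $x^{i-1}dx/2y$ the same computation expresses $\varpi^*\nuI_i$ as a $\CC$-linear combination of $\hat\nuI_1,\hat\nuI_3,\dots,\hat\nuI_{2i-1}$ with leading term $\hat\nuI_{2i-1}$, so the span of $\{\varpi^*\nuI_i\}_{i=1}^g$ is exactly $\{\hat\nuI_{2i-1}\}_{i=1}^g$, and the column-vector identification follows.

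For (2), the cleanest route is to observe that $\varpi$ intertwines both upstairs involutions with the downstairs hyperelliptic involution. Checking on points, $\varpi\hat\iota_H(z,w)=\varpi(z,-w)=(z^2+b_r,-wz)=\iota_H\varpi(z,w)$, and likewise $\varpi\iota_A(z,w)=\varpi(-z,w)=(z^2+b_r,-wz)=\iota_H\varpi(z,w)$; hence $\varpi\hat\iota_H=\iota_H\varpi=\varpi\iota_A$ (here $\iota_A$ is the map written $\hat\iota_A$ in the statement). Passing to pullbacks yields $\hat\iota_H^*\varpi^*=\varpi^*\iota_H^*=\iota_A^*\varpi^*$ on forms, and since $\iota_H^*\nuI_i=-\nuI_i$ all three sides equal $-\varpi^*\nuI$, which is precisely the assertion.

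For (3), I would factor the partial Abel map as $\hat w_{\varpi^*\nuI}=w\circ\varpi_*$, where $\varpi_*$ sends a degree-$g$ based path-divisor on $\hat C_{2g-1}$ to its image divisor on $C_g$. The commutativity is just naturality of integration under pullback, $\int_{\Gamma_{\hat P,-\infty}}\varpi^*\nuI=\int_{\varpi\Gamma_{\hat P,-\infty}}\nuI$, together with $\varpi\,\Gamma_{\hat P,-\infty}=\Gamma_{\varpi\hat P,\infty}$ (using $\varpi(-\infty)=\infty$). Now $w\colon S^g\Gamma_\infty C_g\to\CC^g$ is surjective by Jacobi inversion, and $\varpi_*$ is surjective because $\varpi$ is a (branched) covering, so every path in $C_g$ issuing from $\infty$ lifts to one issuing from $-\infty$; therefore the composite surjects onto $\CC^g$. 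The step needing the most care is this last one: one must make the path lifting precise at the branch points of $\varpi$. Since a nonempty Zariski-open set of generic divisors avoids the branch locus, and the Abel map is holomorphic and hence open, surjectivity onto $\CC^g$ follows from the density of the image.
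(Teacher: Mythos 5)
Your proof is correct and matches the paper's intent: the paper offers no argument beyond the phrase ``Direct computations show the following facts,'' and your substitutions for (1)--(2) together with the path-lifting/Jacobi-inversion factorization $\hat w_{\varpi^*\nuI}=w\circ\varpi_*$ for (3) are exactly those computations. You are in fact more careful than the paper in part (1), where the displayed identity $\frac{z^{2i-2}dz}{w}=\frac{x^{i-1}dx}{2y}$ holds literally only after the translation $x\mapsto x-b_r$; your observation that in the untranslated basis one gets a triangular change of basis with leading term $\hat\nuI_{2i-1}$, so the spans agree, is the right way to reconcile this.
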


Figure 5 shows that as  half of $\beta_1$
consists of the path from $\infty$ to $B_1$, 
the path from $\pm\infty$ to $(0,0)$ in $\hat C_{2g-1}$
corresponds to a quarter of  $\hat \beta_1$.
Each $\hat \beta_a^\pm$ $(a =2, \ldots, g)$ consists of 
a contour from $\pm\infty$ to $\hat B_{2a -1}^\pm$.
Similarly we have $\hat \alpha_a^\pm$ $(a =1, \ldots, g)$.

Noting that $({B_1} \to B_2)$ lifts  to $(0,0) \to \hat B_2^{(\pm)})$,
we find that 
$$
 \int^{B_2}_{B_1} \nu^I = \frac{1}{2} \int^{\hat B_2}_{(0,0)} \varpi^*\nu^I
$$
 is a half-period in $\hat C_{2g-1}$.
The $(2(2g-1) \times g)$ matrix 
  $(\hat \omega', \hat \omega'')|_{\varpi \nuI}$ is given by
$$ 
(\omega'_1, \omega_2', \omega_2', \ldots,  \omega_g', \omega_g',
2\omega''_1, \omega_2'', \omega_2'', \ldots,  \omega_g'', \omega_g'').
$$
The corresponding lattice is denoted by $\hat \Lambda$ and the Jacobian
by $\hat \JJ = \CC^{2g-1}/\hat \Lambda$.

\begin{proposition} \label{prop:Jal}
Let $$
{\hat\JJ}^{\al,(1)}_g := 
\frac{\hat w_{\varpi^* \nuI}(S^g\Gamma_{-\infty} \hat C_{2g-1}) }
{\hat \Lambda \cap
\hat w_{\varpi^* \nuI}(S^g \Gamma_{-\infty}\hat C_{2g-1}) }.
$$
Then the following function is defined on ${\hat\JJ}^{\al,(1)}_g$,
$$
(z_1 z_2 \cdots z_g)(u),
$$ 
where 
$(z_1, z_2, \ldots, z_g)$
in $S^g\Gamma_{-\infty} \hat C_{2g-1}$ is any preimage of $u$ 
under the extended Abel map.

By identifying $w(S^g\Gamma_{\infty} C_g)=\CC^g$ and 
$\hat w_{\varpi^* \nuI}(S^g\Gamma_{-\infty}\hat C_{2g-1})=\CC^g$,
${\hat\JJ}^{\al,(1)}_g$ and ${\JJ}^{(1)}_g$ agree,
and their $\al_1$ function is expressed by
$$
\al_1(u) = (z_1 z_2 \cdots z_g)(u).
$$

\end{proposition}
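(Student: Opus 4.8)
The plan is to prove the three assertions of the proposition in turn — that $z_1\cdots z_g$ descends to a function on $\hat\JJ^{\al,(1)}_g$, that $\hat\JJ^{\al,(1)}_g = \JJ^{(1)}_g$, and that $\al_1 = z_1\cdots z_g$ — all of which reduce to one elementary algebraic identity together with a monodromy bookkeeping. The algebraic input is that, for a preimage $(z_i,w_i)_{i=1,\ldots,g}\in S^g\hat C_{2g-1}$ of $(x_i,y_i)_{i=1,\ldots,g}\in S^g C_g$ under $\varpi_g$ (with $r=1$, so $z_i^2 = x_i - b_1$), we have $b_1 - x_i = -z_i^2$ and hence
$$
F(b_1) = \prod_{i=1}^g (b_1 - x_i) = (-1)^g\,(z_1 z_2\cdots z_g)^2 .
$$
Thus $z_1\cdots z_g$ is, up to the constant $\tilde\gamma_1(\sqrt{-1})^{g}$, a branch of $\sqrt{F(b_1)}$; the entire content of the proposition is that passing to $\hat C_{2g-1}$ and to the lattice $\hat\Lambda\cap\CC^g$ selects this branch globally and consistently.

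First I would prove single-valuedness. The product $z_1\cdots z_g$ is a symmetric, hence well-defined, holomorphic function on $S^g\Gamma_{-\infty}\hat C_{2g-1}$; what must be checked is invariance under the group of deck transformations realized by $\hat\Lambda\cap\CC^g$ acting through the extended Abel map $\hat w_{\varpi^*\nuI}$. I would track the effect on each $z_i$ of traversing the generating loops of Figure \ref{fig:5}(b). The involution $\iota_A:(z,w)\mapsto(-z,w)$ (and its conjugates at the points over $\infty$) is the only operation that changes $z_1\cdots z_g$, and it does so by a single overall sign. A circuit along any $\hat\alpha$ leaves all $z_i$ fixed, and a circuit along $\hat\beta_a^{\pm}$ $(a\ge2)$ or along the full $\hat\beta_1$ returns each $z_i$ to itself; the sign flip occurs only for a half-circuit of $\hat\beta_1$, equivalently a single circuit of $\beta_1$ downstairs, which carries one point around $B_1$ and flips the corresponding $\sqrt{x_i-b_1}$. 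Consequently $z_1\cdots z_g$ is invariant exactly under the lattice in which the $\beta_1$-period is doubled, which is the lattice defining $\hat\JJ^{\al,(1)}_g$.

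Next I would identify $\hat\JJ^{\al,(1)}_g$ with $\JJ^{(1)}_g$. Since $\hat w_{\varpi^*\nuI}$ is surjective onto $\CC^g$ (established in the preceding Proposition, part (3)), the numerator of $\hat\JJ^{\al,(1)}_g$ is all of $\CC^g$, and it remains to show $\hat\Lambda\cap\CC^g = \Lambda^{(1)}$. This is read off the period matrix computed just above,
$$
(\hat\omega',\hat\omega'')|_{\varpi^*\nuI} = (\omega_1',\omega_2',\omega_2',\ldots,\omega_g',\omega_g',\,2\omega_1'',\omega_2'',\omega_2'',\ldots,\omega_g'',\omega_g''),
$$
together with $\Lambda^{(1)} = 2\ZZ\omega_1' + 4\ZZ\omega_1'' + \sum_{b\ge2}(2\ZZ\omega_b' + 2\ZZ\omega_b'')$ from Definition \ref{Hdef-2.3}. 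The doubled entry $2\omega_1''$, arising because half of $\beta_1$ lifts to a quarter of $\hat\beta_1$, produces the generator $4\omega_1''$ after doubling the half-periods, while the repeated columns $\omega_b'$, $\omega_b''$ $(b\ge2)$ collapse to the single generators $2\omega_b'$, $2\omega_b''$; this matches $\Lambda^{(1)}$ generator by generator.

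Finally the formula for $\al_1$ follows by combining the displayed identity with the definition $\al_1(u) = (-1)^{\varepsilon_1}\sqrt{F(b_1)}$ of (\ref{H2-20}): the monodromy sign $(-1)^{\varepsilon_1}$, which counts half-circuits of $\beta_1$ via the winding-number prescription, is precisely the sign by which the branch of $z_1\cdots z_g$ chosen on $\hat C_{2g-1}$ differs from $\sqrt{F(b_1)}$, and the constant $\tilde\gamma_1(\sqrt{-1})^g$ is $\pm1$ by the normalizations of $\hat\nuI$ fixed above. The step I expect to be the main obstacle is the sign and monodromy bookkeeping of the second paragraph: one must verify, consistently across all generators of $\Lambda^{(1)}$ and compatibly with the winding-number definition of $\varepsilon_1$, that the branch ambiguity of $\sqrt{x_i-b_1}$ is exactly trivialized by the lift to $\hat C_{2g-1}$ with its doubled $\beta_1$-cycle, so that $z_1\cdots z_g$ genuinely depends only on the class in $\hat\JJ^{\al,(1)}_g$ and agrees with $\al_1$. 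Once this is in place, the algebraic identity and the lattice comparison are routine.
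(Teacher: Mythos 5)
Your argument is correct and is exactly the assembly the paper intends: the paper states Proposition \ref{prop:Jal} without a separate proof, relying on the same three ingredients you use — the monodromy discussion of $(-1)^{\varepsilon_r}$ in the preceding Remark, the restricted period matrix $(\hat\omega',\hat\omega'')|_{\varpi^*\nuI}=(\omega_1',\omega_2',\omega_2',\ldots,2\omega_1'',\omega_2'',\omega_2'',\ldots)$ matched against $\Lambda^{(1)}$, and the surjectivity of $\hat w_{\varpi^*\nuI}$ from the preceding proposition. The only caveat is your final normalization remark: from $F(b_1)=(-1)^g(z_1\cdots z_g)^2$ the discrepancy between $\sqrt{F(b_1)}$ and $z_1\cdots z_g$ is $(\pm\sqrt{-1})^g$ (and the paper itself wavers between the definitions (\ref{H2-20}) and (\ref{H2-new})), so the claimed constant is $\pm1$ only up to the paper's own loose treatment of such factors; this does not affect the substance of the proof.
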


\bigskip

\bigskip

\noindent
Shigeki Matsutani\\
\noindent
8-21-1 Higashi-Linkan Minami-ku,\\
\noindent
Sagamihara 252-0311,\\
\noindent
JAPAN.\\
\noindent
{rxb01142@nifty.com}\\

\bigskip

\noindent
Emma Previato\\
\noindent
Department of Mathematics and Statistics,\\
\noindent
Boston University,\\
\noindent
Boston, MA 02215-2411,\\
\noindent
U.S.A.\\
\noindent
{ep@bu.edu}\\

\end{document}